\documentclass[a4paper,11pt]{amsart}

\usepackage[centertags]{amsmath}
\usepackage[utf8]{inputenc}
\usepackage{amssymb,amsthm,dsfont,mathrsfs,upgreek,tikz-cd,graphicx,fp,color,exscale,anysize,url,enumerate}
\marginsize{2cm}{2cm}{2cm}{2cm}
\usepackage[all]{xy}
\usepackage{hyperref}
\usepackage{cite}

{\theoremstyle{definition}
\newtheorem{definition}{Definition}[section]

\newtheorem{defn}[definition]{Definition}
\newtheorem{defi}[definition]{Definition}}
\newtheorem{theorem}[definition]{Theorem}
\newtheorem{proposition}[definition]{Proposition}
\newtheorem{lemma}[definition]{Lemma}
\newtheorem{corollary}[definition]{Corollary}

\newtheorem{thm}[definition]{Theorem}
\newtheorem{prop}[definition]{Proposition}

{\theoremstyle{remark}
\newtheorem{rem}[definition]{Remark}
\newtheorem{remark}[definition]{Remark}
}

\newcommand{\dR}{{\mathds R}}
\newcommand{\dC}{{\mathds C}}

\newcommand{\dN}{{\mathds N}}

\newcommand{\dZ}{{\mathds Z}}
\newcommand{\dP}{{\mathds P}}

\newcommand{\dL}{{\mathds L}}
\newcommand{\dA}{{\mathbb A}}

\newcommand{\dG}{{\mathds G}}

\newcommand{\cD}{\mathcal{D}}

\newcommand{\cH}{\mathcal{H}}
\newcommand{\cI}{\mathcal{I}}
\newcommand{\cJ}{\mathcal{J}}
\newcommand{\cK}{\mathcal{K}}

\newcommand{\cM}{\mathcal{M}}
\newcommand{\cN}{\mathcal{N}}
\newcommand{\cO}{\mathcal{O}}
\newcommand{\cP}{\mathcal{P}}

\newcommand{\cR}{\mathcal{R}}

\newcommand{\cT}{\mathcal{T}}

\newcommand{\cX}{\mathcal{X}}

\newcommand{\fA}{\mathfrak{A}}

\newcommand{\fe}{\mathfrak{e}}

\newcommand{\fp}{\mathfrak{p}}

\newcommand{\bL}{\mathbf L}
\newcommand{\bR}{\mathbf R}

\newcommand{\D}{\displaystyle}

\DeclareMathOperator{\For}{\textup{For}}
\DeclareMathOperator{\Spec}{\textup{Spec}\,}

\DeclareMathOperator{\diag}{\textup{diag}}

\newcommand{\Hom}{\operatorname{Hom}}

\newcommand{\DR}{\operatorname{DR}}

\newcommand{\FL}{\operatorname{FL}}
\DeclareMathOperator{\id}{\textup{id}}
\DeclareMathOperator{\Gr}{\textup{Gr}}

\newcommand{\wh}{\widehat}
\newcommand{\dd}{\partial}

\newcommand{\ep}{\varepsilon}
\newcommand{\ul}{\underline}
\newcommand{\ttau}{{}^\tau\!}
\newcommand{\ttheta}{{}^\theta\!}
\newcommand{\inv}{\operatorname{inv}}
\newcommand{\Firr}{F^{\operatorname{irr}}}

\newcommand{\mba}{\mathds{A}}
\newcommand{\mbc}{\mathds{C}}

\newcommand{\mbg}{\mathds{G}}
\newcommand{\mbl}{\mathds{L}}
\newcommand{\mbn}{\mathds{N}}
\newcommand{\mbp}{\mathds{P}}

\newcommand{\mbr}{\mathds{R}}

\newcommand{\mbz}{\mathds{Z}}

\newcommand{\mca}{\mathcal{A}}

\newcommand{\mcd}{\mathcal{D}}
\newcommand{\mce}{\mathcal{E}}

\newcommand{\mch}{\mathcal{H}}
\newcommand{\mci}{\mathcal{I}}
\newcommand{\mcj}{\mathcal{J}}

\newcommand{\mcm}{\mathcal{M}}
\newcommand{\mcn}{\mathcal{N}}
\newcommand{\mco}{\mathcal{O}}

\newcommand{\mcr}{\mathcal{R}}

\newcommand{\mct}{\mathcal{T}}

\newcommand{\msd}{\mathscr{D}}
\newcommand{\msh}{\mathscr{H}}
\newcommand{\msi}{\mathscr{I}}
\newcommand{\msj}{\mathscr{J}}

\newcommand{\mso}{\mathscr{O}}

\newcommand{\msr}{\mathscr{R}}

\newcommand{\msx}{\mathscr{X}}
\newcommand{\msy}{\mathscr{Y}}

\newcommand{\kaa}{\mca^{\varphi/z}_{\text{aff}}}
\newcommand{\kap}{\mca^{\varphi/z}_{*}}
\newcommand{\khn}{\mce^{\varphi/z}_{*}}
\newcommand{\kh}{\mce^{\varphi/z}}
\newcommand{\kmtm}{\mct^{\varphi/z}}

\newcommand{\laa}{\mca^{\psi/z}_{\text{aff}}}
\newcommand{\lap}{\mca^{\psi/z}_{*}}
\newcommand{\lhn}{\mce^{\psi/z}_{*}}
\newcommand{\lh}{\mce^{\psi/z}}
\newcommand{\lmtm}{\mct^{\psi/z}}

\newcommand{\MTM}{\operatorname{MTM}}
\newcommand{\MHM}{\operatorname{MHM}}
\newcommand{\cRint}{\cR^{\operatorname{int}}}
\newcommand{\IrrMHM}{\operatorname{IrrMHM}}

\newcommand{\ra}{\rightarrow}
\newcommand{\hra}{\hookrightarrow}
\newcommand{\lra}{\longrightarrow}

\newcommand{\p}{\partial}

\newsavebox\foobox

\setlength{\parindent}{0cm}

\begin{document}
\title{Examples of hypergeometric twistor $\cD$-modules}
\author{Alberto Castaño Domínguez}
\author{Thomas Reichelt}
\author{Christian Sevenheck}
\thanks{The authors are partially supported by the project SISYPH: ANR-13-IS01-0001-01/02 and DFG grant SE 1114/5-1. The first named author is also partially supported by MTM2014-59456-P, the ERDF and GI-2136. The second named author is supported by a DFG Emmy-Noether-Fellowship (RE 3567/1-1).}
\date{}
\email{alberto.castano@usc.es}
\email{treichelt@mathi.uni-heidelberg.de}
\email{christian.sevenheck@mathematik.tu-chemnitz.de}
\address{Instituto de Matemáticas, Universidade de Santiago de Compostela, 15782 Santiago de Compostela (Spain); Fakultät für Mathematik, Technische Universität Chemnitz, 09107 Chemnitz (Germany)}
\address{Mathematisches Institut, Universit\"at Heidelberg, Im Neuenheimer Feld 205, 69120 Heidelberg (Germany)}
\address{Fakultät für Mathematik, Technische Universität Chemnitz, 09107 Chemnitz (Germany)}
\subjclass[2010]{Primary 14F10, 32C38}
\keywords{D-modules, irregular Hodge filtration, twistor D-modules, Fourier-Laplace transformation, hypergeometric D-modules}

\begin{abstract}
We show that certain one-dimensional hypergeometric differential systems
underlie objects of the category of irregular mixed Hodge modules, which was recently
introduced by Sabbah, and compute the irregular Hodge filtration for them. We also provide a comparison theorem between two different types of
Fourier-Laplace transformation for algebraic integrable twistor $\cD$-modules.
\end{abstract}

\maketitle

\section{Introduction}

\label{sec:Introduction}

In a series of papers (see \cite{Yu,ESY,SabYu,Sa15}), Sabbah and Yu (partly joint with Esnault) have considered a so-called irregular Hodge
filtration on certain cohomology groups resp. on certain irregular $\cD$-modules. It can be seen as a generalization of
the Hodge filtration on a mixed Hodge module in the sense of M.~Saito.
Geometrically, such a filtration arises by considering a version of the twisted de Rham cohomology of certain proper maps,
and it plays (conjecturally) a role in Hodge theoretic mirror symmetry (see \cite{KKP2}). In \cite{Sa15}, Sabbah has defined a category of irregular mixed Hodge modules, which is (up to a technical equivalence) a certain subcategory of T.~Mochizuki's category of (integrable) mixed twistor $\cD$-modules.
He has proved that a rigid irreducible $\cD$-module on the projective line can be uniquely upgraded to an irregular Hodge module
if and only if its formal local monodromies are unitary. Consequently, these objects come equipped with an irregular Hodge filtration
and one can define irregular Hodge numbers for them. They should be seen as interesting numerical invariants attached to
these differential systems, contrary to the case of arbitrary mixed twistor $\cD$-modules, where there is no obvious way to define such numbers.
In \cite{CDS}, the first and the third named author have computed that filtration and its corresponding numbers for the purely irregular
hypergeometric modules, that is for systems of the form $\cD_{\dG_m}/\cD_{\dG_m}P$, where $P$ is the operator
$$
P=\prod_{i=1}^{n} \left(t\partial_t-\alpha_i\right)-t
$$
for real numbers $\alpha_1,\ldots,\alpha_n$. Let us consider the non-commutative ring $R^{\text{int}}_{\dG_m}:=\dC[z,t^\pm]\langle z^2\partial_z, tz\partial_t \rangle$. A crucial point was to show that a certain quotient of the corresponding sheaf $\cRint_{\dG_m}$ on $\dG_m$ which restricts to the $\cD_{\dG_{m,t}}$-module $\cD_{\dG_m}/\cD_{\dG_m}P$ on $z=1$ actually underlies an object in the category $\IrrMHM(\dG_m)$ and the latter can be uniquely extended to an object in $\IrrMHM(\dP^1)$.

In this paper we discuss the case of more general hypergeometric $\cD$-module, that is, for quotients $\cD_{\dG_m}/\cD_{\dG_m}P$, where now $P$ is of the form
$$
P=\prod_{i=1}^{n} \left(t\partial_t-\alpha_i\right)-t \prod_{j=1}^{m} \left(t\partial_t-\beta_j\right)
$$
for positive integers $m,n$ and real numbers $\alpha_1,\ldots,\alpha_n,\beta_1,\ldots,\beta_m$ such that there is no integer difference
between any $\alpha_i$ and $\beta_j$ (this is the irreducibility assumption). It is worth noticing that the presence of the factor $\prod_{j=1}^{m}(t\partial_t-\beta_j)$
rules out the usage of the geometric arguments of \cite{CDS}. We obtain (see Theorem \ref{thm:1dimhypgeo})
that for certain such systems, the corresponding quotient of $\cRint_{\dG_m}$ still underlies
an object of $\IrrMHM(\dG_m)$. As an application, we can completely determine the irregular Hodge filtration
for all systems $\cD/\cD P$ as above, where $n$ is arbitrary and where $m=1$.

The strategy of the proof (which is rather different from that of \cite{CDS}) of the main theorem is to reduce these differential systems from (Fourier-Laplace transformed) A-hypergeometric $\cD$-modules
(the so-called GKZ-systems of Gel'fand, Graev, Zelevinski and Kapranov, see \cite{GGZ87},\cite{GKZ89}), but at the level of (algebraic, integrable, mixed) twistor
$\cD$-modules. We can use a central result of \cite{RS15}, where the Hodge filtration on certain of
these GKZ-systems has been computed explicitly. Technically, the main point in our proof consists in showing that for an $\cR$-module underlying an integrable mixed twistor $\cD$-module on the affine space, the algebraic Fourier-Laplace transformation (which is defined very much the same as in the case of algebraic $\cD$-modules)
coincides with the Fourier-Laplace transformation that can be defined inside the category $\MTM$, or even $\IrrMHM$.
Along the way, we also obtain that an $\cR$-module version of the GKZ-$\cD$-module underlies an irregular Hodge module.

Our results give concrete representations for objects in the category $\MTM$ resp. $\IrrMHM$, which usually are difficult to describe explicitly.
We hope that a similar approach can be used to understand the irregular Hodge filtration for some higher dimensional analogues of the classical hypergeometric systems, also called Horn systems, which occur in the mirror symmetry picture for toric varieties.\\

\textbf{Acknowledgements.} We would like to thank Takuro Mochizuki for communicating the proof of Lemma \ref{lem:Moch} to us. We would further like to thank Takuro Mochizuki and Claude Sabbah for their interest in our work and for many stimulating discussions. We are grateful to the participants of the workshop \textit{Mixed Twistor D-modules} in Heidelberg in 2017 for their time and effort. We also thank the Max Planck Institute for Mathematics in the Sciences, where  some part of the work presented here has been carried out.

\section{Some results on $\msr$- and mixed twistor $\cD$-modules}

Let $X$ be a complex manifold of dimension $d$.  We denote by $\mso_X$ the sheaf of holomorphic functions and  $\msd_X$ the sheaf of differential operators with holomorphic coefficients. Recall that $\msd_X$ is generated by the tangent sheaf $\Theta_X$.
We put $\msx := \dA^1_z \times X$, where the subscript means that $z$ is the canonical coordinate on $\dA^1$. Denote by $p_z: \msx \ra X$ the projection.  We denote by $\msr_\msx$ the sheaf of subalgebras of $\msd_\msx$ generated by $z p_z^* \Theta_X$ over $\mso_\msx$ and by $\msr^{\operatorname{int}}_\msx$ the sheaf of subalgebras of $\msd_\msx$ generated by $z p_z^* \Theta_X$ and $z^2 \p_z$ over $\mso_\msx$. In local coordinates $x_1,\ldots,x_d$, they are given by $\mso_\msx\langle z \p_{x_1},\ldots, z \p_{x_d}\rangle$ and $\mso_\msx\langle z^2\p_z, z \p_{x_1},\ldots, z \p_{x_d}\rangle$, respectively. We set $\Omega^1_\msx := z^{-1}p^*_z \Omega^1_X$ as a subsheaf of $p_z^*\Omega^1_X \otimes \mso_\msx(*(\{0\} \times X))$, $\Omega^p_\msx := \bigwedge^p \Omega^1_\msx$ and $\omega_\msx := \Omega^d_\msx$.

Let $f:X \ra Y$  be a morphism of complex manifolds. We consider the transfer $\msr$-modules, given by $\msr_{\msx \ra\msy} := \mso_{\msx} \otimes_{f^{-1} \mso_\msy} f^{-1} \msr_\msy$ and $\msr_{\msy \leftarrow \msx} := \omega_{\msx} \otimes \msr_{\msx \ra \msy} \otimes f^{-1}\omega_\msy$, being respectively a $(\msr_\msx,f^{-1}\msr_\msy)$-bimodule and a $(f^{-1} \msr_\msy,\msr_\msx)$-bimodule. We have the inverse image and direct image functors
\begin{equation}\label{eq:ImageFunctors}\begin{split}
f^+(\mcn) := \msr_{\msx \ra \msy} \overset{\bL}\otimes_{f^{-1}\msr_\msy} f^{-1} \mcn,\\
f_+(\mcm) := \bR f_*(\msr_{\msy \leftarrow \msx} \overset{\bL}\otimes_{\msr_\msx} \mcm ),
\end{split}\end{equation}
between the bounded derived categories $D^b(\msr_\msx)$ and $D^b(\msr_\msy)$.

If $f:X \times Y \ra Y$ is a projection and $\dim X=d$, then $f_+(\mcm)$ is given by
\[
f_+(\mcm) = \bR f_* \operatorname{DR}_{\msx \times \msy / \msy} (\mcm)[d],
\]
where  $\DR_{\msx \times \msy / \msy} (\mcm)$ is the relative de Rham complex with differential
\[
d(\eta \otimes m) = d\eta \otimes m + \sum_{i=1}^d \left(\frac{dx_i}{z} \wedge \eta \right)\otimes z\p_{x_i}m,
\]
the $(x_i)_{1 \leq i \leq d}$ being local coordinates on $X$.

Let $\sigma: \dG_{m,z} \ra \dG_{m,z}$ be the automorphism $z \mapsto - \overline{z}^{-1}$. Set $\mathbf{S}:= \{z \in \dA^1_z \mid |z| =1\}$. If $\lambda \in \mathbf{S}$ then $\sigma(\lambda) = - \lambda$. Let $\mce^{(d,d)}_{\mathbf{S} \times X/\mathbf{S},c}(V)$ the space of $C^\infty$-sections of $\Omega^{d,d}_{\mathbf{S} \times X / \mathbf{S}}$ over any open subset $V$ of $\mathbf{S} \times X$ with compact support and $C^0_c(\mathbf{S})$ the space of continous functions on $\mathbf{S}$ with compact support.  The space of $C^\infty(\mathbf{S})$-linear maps $\mce^{(n,n)}_{\mathbf{S} \times X / \mathbf{S},c}(V) \ra C^0_c(\mathbf{S})$ is denoted by $\mathfrak{D}\mathfrak{b}_{\mathbf{S} \times X / \mathbf{S}}(V)$. This gives rise to the sheaf $\mathfrak{D}\mathfrak{b}_{\mathbf{S} \times X / \mathbf{S}}$. The abelian category $\msr$-Tri(X) consists of triples $\mct= (\mcm_1,\mcm_2,C)$ where $\mcm_1,\mcm_2$ are $\msr_\msx$-modules and $C: \mcm_{1 \mid \mathbf{S} \times X} \otimes \sigma^* \mcm_{2 \mid \mathbf{S} \times X} \ra \mathfrak{D} \mathfrak{b}_{\mathbf{S} \times X / \mathbf{S}}$ is a $\msr_{\msx \mid \mathbf{S} \times X} \otimes \sigma^* \msr_{\msx \mid \mathbf{S} \times \msx}$-linear morphism. If $D \subset X$ is a hypersurface,
one similarly defines a category $\msr$-Tri$(X,D)$ using $\msr_{\msx}(*D):= \msr_\msx \otimes_{\mso_\msx} \mso_{\msx}(*(\dA^1_z \times D))$-modules (cf. \cite[\S~2.1]{Mo13} for details).

Now let $X:= X_0 \times \dA^1_t$ and let $\Theta_X(\log X_0)$ be the sheaf of vector fields on $X$ which are logarithmic along $X_0$. Let $V_0 \msr_\msx$ be the sheaf of sub-algebras in $\msr_\msx$  which is generated by $z p^*_z \Theta_X(\log X_0)$.  For $z_0 \in \dA^1_z$ we denote by $\msx^{(z_0)}$ a small neighborhood of $\{z_0\} \times X$. A coherent $\msr_\msx$-module is called strictly specializable along $t$ at $z_0$ if $\mcm_{\mid \msx^{(z_0)}}$ is equipped with an increasing and exhaustive filtration $V_a^{(z_0)}(\mcm_{\mid \msx^{(z_0)}})_{a \in \mbr}$ by coherent $(V_0 \msr_{\msx})_{\mid\msx^{(z_0)}}$-modules satisfying certain conditions (cf. \cite[\S\S~2.1.2.1, 2.1.2.2]{Mo13}). This filtration is unique if it exists.  $\mcm$ is called strictly specializable along $t$ if it is strictly specializable along $t$ for any $z_0$.

\begin{remark}\label{rem:CoherSpecAlongT}
If $\mcm$ is itself a coherent $V_0 \msr_\msx$-module, then $\mcm$ is automatically specializable along $t$ and the corresponding filtration $V_a(\mcm)$ exists globally and is trivial, i.e. $V_a(\mcm) = V_b(\mcm)$ for all $a,b \in \mbr$.
\end{remark}

If $\mcm$ is a coherent $\msr_\msx(*t)$-module, we define similarly a filtration $V^{(z_0)}_a(\mcm_{\mid \msx^{(z_0)}})$ and the notion of strict specializability  along $t$ (cf. \cite[\S~3.1.1]{Mo13}). In this case we define the $\msr_\msx$-submodules $\mcm[*t]$ resp. $\mcm[!t]$ of $\mcm$, which are locally generated by $V_0^{(z_0)}\mcm$ resp. $V_{<0}^{(z_0)}\mcm$.
\begin{remark}\label{rem:naivVSastLoc}
If the coherent $\msr_\msx(*t)$-module $\mcm$ is itself $V_0 \msr_\msx$ coherent, then $\mcm[!t] = \mcm[*t] = \mcm(*t) = \mcm$.
\end{remark}

Given an $\msr_\msx(*t)$-triple $\mct = (\mcm_1,\mcm_2,C)$ which is strictly specializable along $t$ we can define
\[
\mct[!t]:= (\mcm_1[*t],\mcm_2[!t],C[!t]), \qquad \mct[*t]:=(\mcm_1[!t], \mcm_2[*t],C[*t])
\]
(cf. \cite[Prop. 3.2.1]{Mo13} for details).

The category of filtered $\msr_\msx$-triples (i.e. $\msr_\msx$-triples equipped with a finite increasing filtration $W$) underlies the category $\text{MTM}(X)$ of mixed twistor $\mcd$-modules (cf. \cite[Def. 7.2.1]{Mo13}). The full subcategory of objects $\mct \in \text{MTM}(X)$ satisfying $\mct = \mct[*D]$ for some hypersurface $D \subset X$ is denoted by $\text{MTM}(X,[*D])$.

If $X$ is a smooth, algebraic variety, we denote by $X^{\text{an}}$ the corresponding complex manifold. Let $\overline{X}$ be a smooth, complete, algebraic variety such that $X \hra \overline{X}$ is an open immersion and $D:= \overline{X} \setminus X$ is a hypersurface. We can define the category of (integrable) algebraic, mixed twistor $\mcd$-modules as
\begin{equation}\label{eq:defMTMalg}
\MTM^{\text{(int)}}_{\text{alg}}(X) := \text{MTM}^{\text{(int)}}(\overline{X}^{\text{an}},[*D]).
\end{equation}
We remark that this definition is independent of the completion up to an equivalence of categories (\hspace{-.5pt}\cite[Lem. 14.1.3]{Mo13}).

Let $f: X \ra Y$ be a quasi-projective morphism of smooth, algebraic varieties. We take completions $X \subset \overline{X}, Y \subset \overline{Y}$  as above, such that $ D_X := \overline{X} \setminus X$ and $D_Y:= \overline{Y} \setminus Y$ and we have a projective morphism $\overline{f}: \overline{X} \ra \overline{Y}$ which restricts to $f$. For $\mct \in \text{MTM}_{\text{alg}}(X)$, corresponding to $\overline{\mct} \in \text{MTM}(\overline{X},[*D_X])$, we define
\[
f_*^i \mct := \mch^i \overline{f}_* \overline{\mct}\, ,
\]
where $\overline{f}_*$ is the direct image functor for mixed twistor $\cD$-modules arising from the one for $\msr$-modules depicted in \ref{eq:ImageFunctors}.

If $X$ is an algebraic variety, we denote by $\mcd_X$ the sheaf of algebraic differential operators and by $\mcr_\msx$ the sheaf of $z$-differential operators, where here $\msx := \mba^1_z \times X$. We define the inverse and direct image functor in the category of algebraic $\mcr_\msx$-modules as in \ref{eq:ImageFunctors}. Analogously to the construction of $\cR_\msx$, we can consider the projection $p:\dP^1\times X\ra X$, and construct the sheaf of subalgebras of $\cD_{\dP^1\times X}(*(\{\infty\}\times X))$ generated by $z^2\p_z$ and $zp^*\Theta_X$ over $\cO_{\dP^1\times X}$ (cf. \cite[\S~14.4.1.1]{Mo13}), which will be denoted by $\cR_{\dP^1\times X}^{\text{int}}(*\infty)$. In that sense, an algebraic integrable $\mcr_\msx$-module gives rise to a unique $\cR_{\dP^1\times X}^{\text{int}}(*\infty)$-module (cf. [ibid., Thm. 14.4.8]).

The following Lemma, which will be needed later, is due to T. Mochizuki.

\begin{lemma}\label{lem:Moch}
Given two good $\mcr_{\mbp^1 \times X}^{\text{\emph{int}}}(*\infty)$-modules $\cP_1,\cP_2$ and an analytic isomorphism $f: \cP_1^{\text{\emph{an}}} \ra \cP_2^{\text{\emph{an}}}$, then $f$ is induced by a unique algebraic isomorphism  between $\cP_1$ and $\cP_2$.
\end{lemma}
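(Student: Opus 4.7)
The plan is a relative GAGA argument along the projection $q:\mbp^1\times X\ra X$, exploiting the projectivity of the first factor. Uniqueness is automatic from the faithfulness of analytification on coherent modules, since good $\mcr^{\text{int}}(*\infty)$-modules are in particular $\mcr^{\text{int}}(*\infty)$-coherent, so two algebraic morphisms with the same analytification agree. The content of the lemma is then the existence of an algebraic $\tilde{f}:\cP_1\ra\cP_2$ with $\tilde{f}^{\text{an}}=f$; applying existence to both $f$ and $f^{-1}$ and invoking uniqueness on the compositions (which lift the identity) automatically shows that $\tilde{f}$ is an isomorphism.

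The question is local on $X$. Assume $X$ is affine, fix a smooth projective compactification $X\hra\overline{X}$ with boundary divisor $D$, and extend the $\cP_i$ by quasi-coherence to coherent $\mcr^{\text{int}}_{\mbp^1\times\overline{X}}(*(\{\infty\}\times\overline{X}\cup\mbp^1\times D))$-modules $\overline{\cP_i}$ on the projective variety $\mbp^1\times\overline{X}$. Choose compatible good filtrations $F_\bullet\overline{\cP_i}$ by coherent $\cO_{\mbp^1\times\overline{X}}(*\cdot)$-modules with the same pole divisor, large enough that $f^{\text{an}}$ carries $F_k\cP_1^{\text{an}}$ into $F_{k+r}\cP_2^{\text{an}}$ for some fixed $r$. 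Multiplying by a large power of a section vanishing along the boundary reduces each filtration step to a genuine coherent $\cO$-module on the projective variety $\mbp^1\times\overline{X}$, whence Serre's GAGA produces an algebraic $\cO$-linear lift of $f^{\text{an}}$ step by step, compatible in $k$ by uniqueness. Passing to the direct limit in $k$ and restricting back to $\mbp^1\times X$ yields an algebraic $\cO$-linear morphism $\tilde{f}:\cP_1\ra\cP_2$ with the correct analytification.

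The main obstacle is the upgrade from $\cO$-linearity to $\mcr^{\text{int}}(*\infty)$-linearity of $\tilde{f}$. This amounts to verifying finitely many relations $\tilde{f}\circ P=P\circ\tilde{f}$, where $P$ ranges over an $\cO$-generating set of $\mcr^{\text{int}}(*\infty)$ (for instance, $z^2\p_z$ together with a local basis of $z\,p_z^*\Theta_X$): each relation is an identity of $\cO$-linear maps $\cP_1\ra\cP_2$ that holds analytically because $f$ is $\mcr^{\text{int}}(*\infty)^{\text{an}}$-linear, and hence holds algebraically by the faithfulness of analytification established at the outset. A secondary technical point is the consistency of the quasi-coherent extensions of the $\cP_i$ and of $f^{\text{an}}$ across the divisor $\{\infty\}\times\overline{X}\cup\mbp^1\times D$; this is handled using the goodness of $\cP_i$, which controls the growth across the added boundary and allows us to choose the filtration $F_\bullet$ so that GAGA applies uniformly.
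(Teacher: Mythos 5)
Your proposal follows the same general strategy as the paper --- reduce to coherent $\cO$-modules via a good filtration, algebraize with GAGA, and upgrade back to $\cR$-linearity using faithfulness of analytification --- but diverges in two meaningful ways.

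The paper stays on $\dP^1\times X$ and works with a \emph{single} coherent $\cO$-submodule $\cN_1\subset\cP_1$ generating $\cP_1$ over $\cR^{\text{int}}(*\infty)$, together with a carefully chosen $\cN_2\subset\cP_2$ generating $\cP_2$ and satisfying $f(\cN_1^{\text{an}})\subset\cN_2^{\text{an}}$. After algebraizing $f|_{\cN_1^{\text{an}}}$ to $g:\cN_1\to\cN_2$ by GAGA, it extends $g$ to $\cP_1\to\cP_2$ directly at the $\cR$-module level by showing that the kernel of $\cR^{\text{int}}(*\infty)\otimes\cN_1\twoheadrightarrow\cP_1$ is annihilated (an argument again running through faithfulness). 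Because this extension is $\cR$-linear by construction, the paper never needs your separate ``upgrade from $\cO$-linearity to $\cR$-linearity'' step. Your route instead compactifies $X\subset\overline{X}$, extends the $\cP_i$ across the boundary, and then applies absolute GAGA on the projective variety $\dP^1\times\overline{X}$ step-by-step along a filtration, passing to a direct limit. That choice is a defensible alternative --- indeed, absolute GAGA needs a projective variety, and $\dP^1\times X$ is not projective unless $X$ is --- so compactifying makes the appeal to Serre's theorem cleaner, whereas the paper's citation of ``GAGA'' on $\dP^1\times X$ must be read as a relative GAGA along the proper projection.

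Two steps in your argument need repair. First, the phrase ``multiplying by a large power of a section vanishing along the boundary reduces each filtration step to a genuine coherent $\cO$-module'' does not do what you want: multiplication by such a section is an automorphism of any $\cO(*H)$-module, so it changes nothing. What you should do instead is pick the good filtration $F_\bullet\overline{\cP_i}$ to consist of honest coherent $\cO_{\dP^1\times\overline{X}}$-modules from the start (this is exactly what goodness provides: a coherent $\cO$-module $\cN$ generating over $\cR$, and $F_k:=F_k\cR^{\text{int}}\cdot\cN$). Second, the claim that one may choose the filtrations ``large enough that $f^{\text{an}}$ carries $F_k\cP_1^{\text{an}}$ into $F_{k+r}\cP_2^{\text{an}}$ for some fixed $r$'' is asserted without argument. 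It is true, but the mechanism deserves to be spelled out: since $f^{\text{an}}(F_0\cP_1^{\text{an}})$ is $\cO^{\text{an}}$-coherent and $\dP^1\times\overline{X}$ is compact, the increasing exhaustion $(F_k\cP_2)^{\text{an}}$ of $\cP_2^{\text{an}}$ must absorb it at some finite stage $r$; and then $\cR$-linearity of $f^{\text{an}}$ propagates the containment $f^{\text{an}}(F_k)\subset F_{k+r}$ to all $k$. Without compactness (or some substitute) the uniform bound is not automatic, and without $\cR$-linearity you do not get the constant shift. Once both points are supplied, your argument goes through.
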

\begin{proof}
Take a coherent $\mco_{\mbp^1 \times X}$-submodule $\cN_1 \subset \cP_1$ such that $\mcr_{\mbp^1\times X}^{\text{int}}(*\infty)\otimes \cN_1 \ra \cP_1$ is surjective and a coherent $\mco_{\mbp^1 \times X}$-module $\cN_2 \subset \cP_2$ such that both $\mcr_{\mbp^1 \times X}^{\text{int}}(*\infty) \otimes \cN_2 \ra \cP_2$ is surjective and $f(\cN_1^{\text{an}}) \subset \cN_2^{\text{an}}$. According to GAGA we have a morphism $g:\cN_1 \ra \cN_2$  which after analytification is equal to the morphism $\cN_1^{\text{an}} \ra \cN_2^{\text{an}}$ induced by $f$. Denote by $\cK_1$ the kernel of $\mcr_{\mbp^1 \times X}^{\text{int}}(*\infty) \otimes \cN_1 \ra \cP_1$. This gives a morphism $\cK_1 \ra \cP_2$ which one obtains as the composition $\cK_1 \ra \mcr_{\mbp^1 \times X}^{\text{int}}(*\infty) \otimes \cN_1 \overset{\varphi}\lra \mcr_{\mbp_1 \times }^{\text{int}}(*\infty) \otimes \cN_2 \ra \cP_2$, where $\varphi$ is induced by $g$. Because the induced morphism $(\mcr_{\mbp^1 \times X}^{\text{int}}(*\infty) \otimes \cN_1)^{\text{an}} \ra \cP_2^{\text{an}}$ factors through $\cP_1^{\text{an}}$, the induced morphism $\cK_1^{\text{an}} \ra \cP_2^{\text{an}}$ is $0$. Hence, we obtain that $\cK_1 \ra \cP_2$ is $0$, which means that $\mcr_{\mbp^1 \times X}^{\text{int}}(*\infty) \otimes \cN_1 \ra \cP_2$ factors through $\cP_1$. This shows the existence. The uniqueness follows from \cite[Prop. 10]{Serre}.
\end{proof}

Since an algebraic, integrable, mixed twistor $\mcd$-module on $X$ gives rise to an analytic $\msr_{\mbp^1 \times X}^{\text{int}}(*\infty)$-module which underlies an algebraic $\mcr_{\mbp^1 \times X}^{\text{int}}(*\infty)$-module by \cite[Thm. 14.4.8]{Mo13}, the Lemma above shows that we can define functors (up to canonical isomorphism)
\begin{align*}
\For_i: \MTM^{\text{int}}_{\text{alg}}(X) &\lra \operatorname{Mod}(\mcr_\msx^{\text{int}}) \\
(\mcm_1,\mcm_2,C) &\mapsto \mcm_i \qquad \qquad  \text{for} \; i=1,2,
\end{align*}
which become faithful if we impose goodness.

\section{Fourier transformation of twistor $\cD$-modules}

In this section we define the Fourier-Laplace transformation in the categories of integrable $\mcr$-modules and integrable, algebraic, mixed twistor $\mcd$-modules, and we prove that these two transformations are compatible.\\

Consider the diagram
\[
\xymatrix{ & \dA^N \times \widehat{\dA}^N \ar[rr]^j \ar[dl]_{p} \ar[dr]^q & & \dP^N\times \widehat{\dP}^N \ar[d]_{\overline{q}}\\ \dA^N & & \widehat{\dA}^N \ar[r]^{\widehat{j}} & \widehat{\dP}^N},
\]
where $p$ and $q$ are the projections to the first and second factor respectively.
Consider the function $\varphi=\sum_{i=1}^N w_i\cdot \lambda_i$ on $\dA^N\times \wh\dA^N$.\\

Let $\kaa$ the $\mcr_{\mba^1 \times \mba^N \times \widehat{\mba}^N}$-module $\mco_{\mba^1 \times \mba^N \times \widehat{\mba}^N}$ equipped with the $z$-connection $z d  +d\varphi$, and
consider the reduced divisor $D:= (\mbp^N \times \widehat{\mbp}^N) \setminus (\mba^N \times \widehat{\mba}^N)$. Then $\kap := j_* \kaa$ carries a natural structure of an $\mcr_{\mba^1 \times \mbp^N \times \widehat{\mbp}^N}(*D)$-module.\\

We denote by $\khn$ the analytification of $\kap$, which is an $\msr_{\mba^1 \times \mbp^N \times \widehat{\mbp}^N}(*D)$-module.

\begin{lemma}\label{lem:naivVSright}
$\khn$ is strictly specializable along $D$ and
\[
\kh :=\khn[*D] =  \khn \, .
\]
\end{lemma}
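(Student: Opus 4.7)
The plan is to verify the hypothesis of (the normal-crossing analogue of) Remark~\ref{rem:naivVSastLoc}: if $\khn$ is locally $V_0\msr_\msx$-coherent along the divisor $D$, then both strict specializability and the equality $\khn[*D]=\khn$ follow automatically, the $V$-filtration being trivial by Remark~\ref{rem:CoherSpecAlongT}. The statement is local on $\dA^1_z\times\dP^N\times\wh\dP^N$, and $D$ is a simple normal crossing divisor whose two smooth components are $\{W_0=0\}\times\wh\dP^N$ and $\dP^N\times\{\Lambda_0=0\}$. Hence it is enough to establish $V_0\msr_\msx$-coherence in two kinds of local charts: one in which only a single component of $D$ meets the chart, and one in which both meet transversally.

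In each such chart I claim the canonical generator $1\in\khn=\mso_\msx(*D)$ already generates the whole module over $V_0\msr_\msx$. Consider first a chart where only one component appears, say $W_1\neq 0$ and $\Lambda_0\neq 0$, with local coordinates $u_0=W_0/W_1, u_2,\ldots,u_N,\lambda_1,\ldots,\lambda_N$, so that $D=\{u_0=0\}$ and $\varphi=(\lambda_1+\sum_{k\ge 2}u_k\lambda_k)/u_0$. The $z$-connection acts on $1$ by $(z\theta)(1)=\theta(\varphi)\cdot 1$, and since $z\partial_{\lambda_1}\in V_0\msr_\msx$ (the $\lambda_j$ being affine, non-logarithmic coordinates here), an elementary Leibniz-rule induction gives $(z\partial_{\lambda_1})^k(1)=u_0^{-k}\cdot 1$ for every $k\ge 1$; combined with the $\mso_\msx\subset V_0\msr_\msx$-action this yields all of $\mso_\msx(*D)=\khn$.

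The crossings chart $W_1\neq 0,\Lambda_1\neq 0$, with coordinates $u_0,u_2,\ldots,u_N,\mu_0,\mu_2,\ldots,\mu_N$ and $D=\{u_0=0\}\cup\{\mu_0=0\}$, is more subtle. Here $\varphi=(1+\sum_{k\ge 2}u_k\mu_k)/(u_0\mu_0)$. The initial step combines $zu_0\partial_{u_0}(1)=-\varphi\cdot 1$ with $z\partial_{\mu_l}(1)=(u_l/(u_0\mu_0))\cdot 1$ for $l\geq 2$: an $\mso_\msx$-linear manipulation subtracts off $\sum_l \mu_l\cdot u_l/(u_0\mu_0)$ from $\varphi$, leaving $1/(u_0\mu_0)\in V_0\msr_\msx\cdot 1$, and multiplying with $\mu_0\in\mso_\msx$ or $u_0\in\mso_\msx$ then produces $1/u_0$ and $1/\mu_0$. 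A two-variable induction on $(a,b)$ follows the same template: $zu_0\partial_{u_0}$ applied to $u_0^{-a}\mu_0^{-b}$ gives a combination of already-known terms together with $(1+\sum u_k\mu_k)/(u_0^{a+1}\mu_0^{b+1})$, and applying $z\partial_{\mu_l}$ to $u_0^{-a}\mu_0^{-b}$ generates the correcting monomials $u_l/(u_0^{a+1}\mu_0^{b+1})$; subtracting and multiplying with $\mu_0^{b+1-b}$ eventually delivers $u_0^{-(a+1)}\mu_0^{-b}$, and symmetrically for the second index.

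The main obstacle is precisely this inductive bookkeeping in the crossings chart: the logarithmic derivation $zu_0\partial_{u_0}$ never produces a pure pole $u_0^{-(a+1)}\mu_0^{-b}$ in one step, only the full expression $(1+\sum u_k\mu_k)/(u_0^{a+1}\mu_0^{b+1})$, so one has to carefully combine it with the affine derivations $z\partial_{\mu_l}$ (and then with multiplication by $\mso_\msx$) to disentangle the two pole directions. Once the coherence $V_0\msr_\msx\cdot 1=\khn$ is established in every chart, Remarks~\ref{rem:CoherSpecAlongT} and~\ref{rem:naivVSastLoc}, applied successively along each of the two components of $D$, give strict specializability of $\khn$ along $D$ together with the identity $\kh=\khn[*D]=\khn$.
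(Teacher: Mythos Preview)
Your computation is correct and your overall strategy---proving that the generator $1$ is cyclic over the logarithmic subalgebra $V_0^D\msr_\msx$---is sound and indeed close in spirit to what the paper does. The difference is structural: the paper does not work with the normal-crossing $V_0^D$ directly. Instead, it passes through the graph embedding $i_g\colon X\to\dA^1_t\times X$, $t=x_1\mu_1$, reducing the normal-crossing divisor to the single smooth hypersurface $\{t=0\}$; there it proves that $i_{g,+}\khn$ is $V_0^{\{t=0\}}$-cyclic (via the relations $\tfrac{1}{t^k}=(z\partial_{\mu_2})^k$ and an expression of $z\partial_t$ in $V_0$-operators), and then invokes \cite[\S3.3.1.1]{Mo13} to transport the conclusion back to $X$. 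The point is that strict specializability along a non-smooth $D$, and the functor $[*D]$, are \emph{defined} in \cite{Mo13} via this graph embedding.

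This is exactly the step where your argument has a gap. Your computation gives $V_0^{D_i}$-coherence for each component, hence by Remarks~\ref{rem:CoherSpecAlongT} and~\ref{rem:naivVSastLoc} you obtain $\khn[*D_i]=\khn$ for $i=1,2$. But you then assert that applying these ``successively along each of the two components'' yields $\khn[*D]=\khn$ and strict specializability along $D$. That requires knowing that for $D=D_1\cup D_2$ normal crossing one has $\mcm[*D]=(\mcm[*D_1])[*D_2]$ and that specializability along each $D_i$ implies specializability along $D$ in the graph-embedding sense. These facts are true in Mochizuki's framework, but they are not the content of Remarks~\ref{rem:CoherSpecAlongT} or~\ref{rem:naivVSastLoc} as stated (which treat only a smooth $\{t=0\}$), and you do not cite the relevant compatibility. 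The paper's graph-embedding argument is precisely what replaces this missing link: it reduces everything to a single smooth hypersurface, where the two Remarks apply verbatim, and then \cite[\S3.3.1.1]{Mo13} furnishes the passage back. To make your proof complete you should either cite the normal-crossing compatibility result from \cite{Mo13}, or reformulate your cyclicity computation after pushing forward along $i_g$, which is essentially what the paper does.
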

\begin{proof}
We denote the coordinates on $\mbp^N \times \widehat{\mbp}^N$ by $((w_0:w_1: \ldots :w_N),(\lambda_0:\lambda_1:\ldots:\lambda_N))$, where the chart $\mba^N \times \widehat{\mba}^N$ is  embedded via the map $j: (w_1,\ldots, w_N,\lambda_1,\ldots, \lambda_N) \mapsto ((1:w_1,\ldots,w_N),(1:\lambda_1:\ldots:\lambda_N))$. By symmetry it is enough to prove the claim in the charts $\{w_1 \neq 0,\lambda_0 \neq 0\}$, $\{w_1 \neq 0,\lambda_1 \neq 0\}$ and $\{w_1 \neq 0, \lambda_2 \neq 0\}$. We will assume $N\geq2$ and consider the chart $X:=\{w_1 \neq 0, \lambda_2 \neq 0\}$; the arguments with the other charts and when $N=1$ go similarly. The chart $X$ is embedded as $(x_1,\ldots, x_N,\mu_1,\ldots, \mu_N) \mapsto ((x_1:1:x_2\ldots:x_N),(\mu_1:\mu_2:1:\mu_3:\ldots:\mu_N))$, so that the map $\varphi$ is given on $X$ by $\frac{1}{x_1 \mu_1}(\mu_2+ x_2 + \sum_{i \geq 3}\mu_i x_i)$. Set $\msd_X := \dA^1 \times (D \cap X) = \dA^1 \times \{ x_1 \cdot \mu_1 = 0\}$. The module $(\khn)_{\mid X}$ is a cyclic $\msr_{\dA^1 \times X}(*\msd_X)$-module $\msr_{\dA^1 \times X}(*\msd_X)/ \msi$, where the left ideal $\msi$ is generated by
\begin{align*}
z\p_{x_1} + \frac{1}{x_1^2 \mu_1}(\mu_2+ x_2 + \sum_{i \geq 3} \mu_i x_i), \qquad z \p_{x_2} - \frac{1}{x_1 \mu_1}, \qquad z\p_{x_j} - \frac{\mu_j}{x_1 \mu_1}, \\
z \p_{\mu_1} + \frac{1}{x_1 \mu_1^2}(\mu_2 +x_2 + \sum_{i \geq 3} \mu_i x_i), \qquad z\p_{\mu_2} - \frac{1}{x_1\mu_1}, \qquad z \mu_j - \frac{x_j}{x_1 \mu_1},
\end{align*}
where $j \geq 3$. Consider the map $i_g: X \ra \dA^1_t \times X$ given by
\[
(x_1,\ldots,x_N,\mu_1,\ldots,\mu_N) \mapsto  (x_1\cdot \mu_1, x_1,\ldots,x_N,\mu_1,\ldots,\mu_N).
\]

The direct image $i_{g,+}(\msr_{\msx}(*\msd_X)/ \msi)$ is a cyclic $\msr_{\dA^1_t \times \msx}(*(\dA^1_t \times \msd_X))$-module $\msr_{\dA^1_t \times \msx}(*(\dA^1_t \times \msd_X)) / \msj'$ where $\msj'$ is generated by
\begin{align*}
& z\p_{x_1}+ \mu_1 z\p_t + \frac{1}{x_1^2 \mu_1}(\mu_2+x_2+ \sum_{i \geq 3} \mu_i x_i), \qquad z\p_{x_2} - \frac{1}{x_1\mu_1}, \qquad  z\p_{x_j} - \frac{\mu_j}{x_1\mu_1},\\
&z\p_{\mu_1} + x_1 z\p_t + \frac{1}{x_1 \mu_1^2}(\mu_2 + x_2 + \sum_{i \geq 3} \mu_i x_i), \qquad z\p_{\mu_2} - \frac{1}{x_1 \mu_1}, \qquad z\p_{\mu_j} - \frac{x_j}{x_1 \mu_1}, \qquad t - x_1 \mu_1,
\end{align*}
where $j \geq 3$. Define the cyclic $\msr_{\dA^1_t \times \msx}(*t)$-module $\msr_{\dA^1_t \times \msx}(*t) / \msj$ where $\msj$ is generated by
\begin{align*}
& z\p_{x_1}+ \mu_1 z\p_t + \frac{\mu_1}{t^2}(\mu_2+x_2+ \sum_{i \geq 3} \mu_i x_i), \qquad z\p_{x_2} - \frac{1}{t}, \qquad  z\p_{x_j} - \frac{\mu_j}{t},\\
&z\p_{\mu_1} + x_1 z\p_t + \frac{x_1}{t^2}(\mu_2 + x_2 + \sum_{i \geq 3} \mu_i x_i), \qquad z\p_{\mu_2} - \frac{1}{t}, \qquad z\p_{\mu_j} - \frac{x_j}{t}, \qquad t - x_1 \mu_1,
\end{align*}
where $j \geq 3$. Then we have the following $\msr_{\dA^1_t \times \msx}$-linear isomorphism
\begin{align*}
\msr_{\dA^1_t \times \msx}(*(\dA^1_t \times \msd_X)) / \msj' &\lra \msr_{\dA^1_t \times \msx}(*t) / \msj \\
P \cdot \frac{1}{(x_1 \mu_1)^k} &\mapsto P \cdot \frac{1}{t^k}.
\end{align*}
Consider the $V$-filtration along $t=0$. The relations $\frac{1}{t^k}  = (z\p_{\mu_2})^k$,
\[
z\p_t =  - \frac{1}{t} \left(z\p_{x_1} x_1 + \frac{1}{t} (\mu_2 + x_2 + \sum_{i \geq 3} \mu_i x_i) \right) =  - z\p_{x_1}x_1 z\p_{\mu_2} -  (\mu_2 + x_2 + \sum_{i \geq 3} \mu_i x_i)(z\p_{\mu_2})^2
\]
and a straightforward induction over $k$ for $(z \p_t)^k$  show that $i_{g,+} (\msr_{\msx}(*\msd_X) /\msj)$ is a cyclic, hence also coherent, $V_0 \msr_{\dA^1_t \times \msx}$-module. It follows from Remark \ref{rem:CoherSpecAlongT} that $i_{g,+} (\msr_{\msx}(*\msd_X) /\msj) = i_{g,+} (\msr_{\msx}(*\msd_X) /\msj)[*t]$, and as a consequence, we are done by applying \cite[\S~3.3.1.1]{Mo13} and Remark \ref{rem:naivVSastLoc}.

\end{proof}

It follows from \cite[Prop. 3.3]{SabYu} that $\kh$ underlies an object $\kmtm \in \text{MTM}^{\text{int}}_{\text{alg}}(\mba^N \times \widehat{\mba}^N)$.

We will now define a Fourier-Laplace transformation for algebraic $\mcr^{\text{int}}_{\mba^1 \times \mba^N}$-modules.

\begin{defn}
The Fourier-Laplace transformation functor from the category of algebraic $\mcr^{\text{int}}_{\mba^1 \times \mba^N}$-modules
to the category of algebraic $\mcr^{\text{int}}_{\mba^1 \times \widehat{\mba}^N}$-modules is defined as
\[
\widehat{\mcm}:=\FL (\mcm) := \mch^0 q_+ \left((p^+ \mcm) \otimes \kaa \right),
\]
for any $\mcm$ in $\text{Mod}(\mcr^{\text{int}}_{\mba^1 \times \mba^N})$.
\end{defn}

\begin{remark}\label{rem:naivFL}
Let $M := \Gamma(\mba^N,\mcm)$ be the $R^{\text{int}}_{\mba^1 \times \mba^N}$-module of global sections of $\mcm$. The $R^{\text{int}}_{\mba^1 \times \widehat{\mba}^N}$-module $\widehat{M} := \Gamma(\widehat{\mba}^N,\widehat{\mcm})$ is isomorphic to $M$ as a $\mbc[z]$-module and the full $R^{\text{int}}_{\mba^1 \times \widehat{\mba}^N}$-structure is given by
\[
\lambda_i \cdot m  := -z\p_{w_i} \cdot m, \qquad z\p_{\lambda_i} \cdot m  := w_i \cdot m \qquad \text{and} \qquad  z^2\p_z \cdot m := \left(z^2 \p_z - \sum_{i=1}^N z \p_{w_i} w_i\right) \cdot m\, .
\]
On the other hand, there is a similar definiton of a Fourier-Laplace transformation in the category of algebraic $\mcd_{\mba^N}$-modules (see e.g. \cite[Def. 1.2]{Reich2}) which we also denote by $\FL$.
\end{remark}

The Fourier-Laplace transformation for algebraic, integrable, mixed twistor $\mcd$-modules is defined in the following way.

\begin{defn}
The Fourier-Laplace transformation in the category of algebraic, integrable mixed twistor $\cD$-modules on $\dA^N$ is defined by
\[
\FL_{\textup{MTM}}(\mcm) := \mch^0 q_* \left( (p^* \mcm) \otimes \kmtm \right),
\]
where $\mcm \in \textup{MTM}^{\text{int}}_{\text{alg}}(\mba^N)$.
\end{defn}

Recall that for $\mcm = (\mcm_1,\mcm_2,C) \in \textup{MTM}_{\text{alg}}^{\text{int}} (X)$ we denote by $\For_i$ the forgetful functors $\For_i(\mcm) = \mcm_i$ for $i=1,2$.
\begin{proposition}\label{prop:FLnaivVSmtm}
Let $\cM\in\MTM^{\text{\emph{int}}}_{\text{\emph{alg}}}(\mba^N)$. Then
\[
\textup{For}_1( \FL_{\textup{MTM}}(\mcm)) = \FL (\textup{For}_1(\mcm)) \qquad \text{and} \qquad \textup{For}_2( \FL_{\textup{MTM}}(\mcm)) = z^{-N} \FL (\textup{For}_2(\mcm)).
\]
\end{proposition}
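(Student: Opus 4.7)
The plan is to unravel both definitions of Fourier--Laplace transformation at the level of underlying $\mcr$-modules and compare them via Mochizuki's GAGA (Lemma \ref{lem:Moch}) applied to the projective compactification. Throughout, the key technical inputs are Lemma \ref{lem:naivVSright} (which identifies $\kh$ with $\khn$, so the analytification of $\kap$ has no extra strict-specializability correction along $D$) and Lemma \ref{lem:Moch} (which lets us move freely between analytic and algebraic $\mcr^{\text{int}}(*\infty)$-modules).

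First I would make precise the identification of $\FL_{\textup{MTM}}$ on the level of the underlying $\msr$-modules. By definition \eqref{eq:defMTMalg}, $q_*$ in $\textup{MTM}^{\text{int}}_{\text{alg}}$ is computed by lifting $(p^*\mcm)\otimes\kmtm$ to the compactification $\mbp^N\times\wh{\mbp}^N$ with meromorphic poles along the boundary divisor $D\cup D'$ (where $D' = (\mbp^N\times\wh{\mbp}^N)\setminus(\mbp^N\times\wh{\mba}^N)$), and applying the projective MTM direct image $\overline{q}_*$. On each component of a triple this is the analytic $\msr$-module direct image functor of \eqref{eq:ImageFunctors}. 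By Lemma \ref{lem:Moch} applied to the resulting good $\mcr^{\text{int}}_{\mbp^1\times\wh{\mbp}^N}(*\infty)$-module, this analytic direct image comes from a unique algebraic $\mcr^{\text{int}}$-module direct image under $\overline{q}$; restricting to the chart $\wh{\mba}^N$ recovers the algebraic direct image $q_+$ appearing in the definition of $\FL$.

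Next I would identify the two components of the kernel $\kmtm$. By construction (following \cite[Prop. 3.3]{SabYu}) and Lemma \ref{lem:naivVSright}, $\textup{For}_1(\kmtm)$ is the analytification of $\kap$, and by Lemma \ref{lem:Moch} it corresponds to the algebraic $\kaa$ on $\mba^N\times\wh{\mba}^N$. The second component $\textup{For}_2(\kmtm)$ is, as a bare $\mcr^{\text{int}}$-module, again isomorphic to $\kaa$, but the sesquilinear pairing $C$ that enters into the triple $\kmtm$ is defined via integration of a relative top-degree form (of type $(d,d)$) against a distribution: pushing forward under $q$ integrates out the $N$ fiber coordinates $w_1,\dots,w_N$, and by Mochizuki's normalization conventions for $\msr$-triples (cf. \cite[\S~2.1, \S~14.4]{Mo13}) each such integration absorbs one factor of $z^{-1}$, producing an overall twist of $z^{-N}$ on the $\textup{For}_2$-side.

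Putting the two steps together and using the projection formula $q_+(p^+\mcm\otimes\kaa)\simeq q_+((p^+\mcm)\otimes\kaa)$ at the level of underlying $\mcr$-modules, one obtains
\[
\textup{For}_1(\FL_{\textup{MTM}}(\mcm))=\mch^0 q_+\bigl(p^+\textup{For}_1(\mcm)\otimes\kaa\bigr)=\FL(\textup{For}_1(\mcm)),
\]
and similarly with the twist of Step~2 producing
\[
\textup{For}_2(\FL_{\textup{MTM}}(\mcm))=z^{-N}\mch^0 q_+\bigl(p^+\textup{For}_2(\mcm)\otimes\kaa\bigr)=z^{-N}\FL(\textup{For}_2(\mcm)).
\]
The main obstacle is the careful bookkeeping of this $z^{-N}$ factor on the second component: all the category-theoretic identifications are essentially formal (GAGA plus direct image compatibility), but getting the precise power of $z$ requires unwinding the convention by which $C$ is pushed forward under a projection, which is where the $N$ factors of $z^{-1}$ arise.
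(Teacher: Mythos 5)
Your overall strategy matches the paper's: compactify to $\mbp^N\times\widehat{\mbp}^N$, use Lemma \ref{lem:naivVSright} so that the stupid localization of $j_*(\For_i(\mcn)\otimes\kaa)$ along $D$ agrees with $\For_i(\overline{\mcn}\otimes\overline{\cT}^{\varphi/z})$, and use Lemma \ref{lem:Moch} (GAGA) to pass between the analytic and algebraic $\cR^{\textup{int}}(*\infty)$-module categories. These are indeed the right ingredients.

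However, your account of where the $z^{-N}$ twist comes from is wrong, and it matters. You implicitly take $\For_i(p^*\mcm)=p^+\For_i(\mcm)$ for both $i$, and then locate the asymmetric $z^{-N}$ on $\For_2$ in the pushforward of the sesquilinear pairing $C$ under $q$ (``each such integration absorbs one factor of $z^{-1}$''). This cannot be correct: $C$ is auxiliary data and is not part of the underlying $\msr$-modules $\mcm_1,\mcm_2$ that $\For_1,\For_2$ extract, and the direct image of an $\msr$-triple acts by the same $\msr$-module direct image functor on each component, so the pairing produces no asymmetric twist on $\For_1$ versus $\For_2$. The paper instead pins the asymmetric twist to the MTM pullback $p^*$ via \cite[\S~14.3.3.3]{Mo13}: along a projection with $N$-dimensional fiber one has $\For_1(p^*\mcm)=z^N p^+\For_1(\mcm)$ while $\For_2(p^*\mcm)=p^+\For_2(\mcm)$. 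That the pullback fiber dimension (and not just $q$) controls the twist is visible in the companion Proposition \ref{prop:1dimFL}: there $p$ has a $1$-dimensional fiber, $q$ still has an $N$-dimensional fiber, and the twist on $\For_1$ is $z^{1-N}$. Under your bookkeeping the twist on $\For_1$ would depend only on $q$ and so would be the same in both propositions, contradicting the stated result. So the step you yourself flag as ``the main obstacle''---the exact source of the power of $z$---is exactly where your argument diverges from the paper and does not hold up.
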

\begin{proof}
By  \cite[\S~14.3.3.3]{Mo13} it is clear that $\text{For}_i$ almost commutes  with $p^*$, more precisely we have
\[
\For_1(p^*(\mcm)) = z^N p^+ (\For_1(\mcm)) \qquad \text{and} \qquad \For_2(p^*(\mcm)) = p^+ (\For_2(\mcm)).
\]
Then it is enough to prove for $\mcn \in \MTM_{\text{alg}}^{\text{int}}(\mba^N \times \widehat{\mba}^N)$ that $q_+(\For_i(\mcn) \otimes \kaa) \cong \For_i(q_*(\mcn \otimes \kmtm))$.
We have
\begin{align*}
\widehat{j}_+ q_+\left(\For_i(\mcn) \otimes \kaa\right) &\cong \overline{q}_+ j_+ \left(\For_i(\mcn) \otimes \kaa\right) \cong \overline{q}_+ j_* \left(\For_i(\mcn) \otimes \kaa\right) \\
&\cong \bR\overline{q}_* \DR_{\mbp^N \times \widehat{\mbp}^N} j_*\left(\For_i(\mcn) \otimes \kaa\right).
\end{align*}
Since $\mcn, \kmtm \in \MTM^{\text{int}}_{\text{alg}}(\mba^N \times \widehat{\mba}^N)$, there exist mixed twistor $\cD$-modules $\overline{\mcn}, \overline{\mct}^{\varphi/z} \in \MTM^{\text{int}}(\mbp^N \times \widehat{\mbp}^N,[*D])$ whose underlying $\msr$-modules are (after stupid localization along $D$) analytifications of the $j_*\For_i(\mcn)$ and $j_*\kaa$. Hence
\[
\left(j_*\left(\For_i(\mcn) \otimes \kaa\right)\right)^{\text{an}} \cong \For_i\left(\overline{\mcn} \otimes \overline{\mct}^{\varphi/z}\right)(*D) \cong \For_i\left(\overline{\mcn} \otimes \overline{\mct}^{\varphi/z}\right),
\]
where the last equation follows from Lemma \ref{lem:naivVSright}. We therefore get
\begin{align*}
\left( \widehat{j}_+ p_+\left(\For_i(\mcn) \otimes \kaa\right) \right)^{\text{an}} &\cong \bR\overline{q}_* \DR^{\text{an}}_{\mbp^N \times \widehat{\mbp}^N} \left( j_*\left(\For_i(\mcn) \otimes \kaa\right)\right)^{\text{an}} \\
&\cong \bR\overline{q}_* \DR^{\text{an}}_{\mbp^N \times \widehat{\mbp}^N} \For_i\left(\overline{\mcn} \otimes \overline{\mct}^{\varphi/z}\right) \\
&\cong \For_i\left( \overline{q}_* \left(\overline{\mcn} \otimes \overline{\mct}^{\varphi/z}\right) \right).
\end{align*}
The claim follows now from Lemma \ref{lem:Moch}, noting that the goodness is a consequence of Lemma \ref{lem:naivVSright} and \cite[Thm. 14.4.15]{Mo13}.
\end{proof}

We have the following variant, which will be used in the next section. Consider the diagram
$$
\xymatrix{ & \dA^N \times \dG_m \ar[rr]^j \ar[dl]_{p} \ar[dr]^q & & \dP^N\times \dP^1 \ar[d]_{\overline{q}}\\ \dA^N & & \dG_m \ar[r]^{\widehat{j}} & \mbp^1}
$$
and let $\psi:=w_1\cdot t+w_2+\ldots+w_N$.

Similarly as above we define the $\mcr_{\mba^1 \times \mba^N \times \dG_m}$-module $\laa$, being $\mco_{\mba^1 \times \mba^N \times \dG_m}$ endowed with the $z$-connection $z d + d\psi$. As in the other case, we can consider the divisor $H:=(\mbp^N \times \mbp^1) \setminus (\mba^N \times \mbg_m)$ and obtain the $\mcr_{\mba^1 \times \mbp^N \times \mbp^1}(*H)$-module $\lap:=j_*\laa$. In the same vein as before, we will denote by $\lhn$ the $\msr_{\mba^1 \times \mbp^N \times \mbp^1}(*H)$-module being the analytification of $\lap$. The following Lemma is similar to Lemma \ref{lem:naivVSright}.

\begin{lemma}\label{lem:adaptedVSright}
$\lhn$ is strictly specializable along $H$ and
\[
\lh := \lhn[*H]=\lhn.
\]
\end{lemma}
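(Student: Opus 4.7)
My plan is to mirror the approach of Lemma \ref{lem:naivVSright}, adapted to the simpler function $\psi = w_1 t + w_2 + \ldots + w_N$ and to the divisor $H = (\mbp^N \times \mbp^1) \setminus (\mba^N \times \mbg_m)$. Since strict specializability and the identity $\lhn[*H] = \lhn$ are local statements along $H$, it is enough to verify them in a finite collection of affine charts of $\mbp^N \times \mbp^1$ covering $H$. Using the symmetry among $w_2, \ldots, w_N$, this reduces to a handful of charts distinguished by whether the $\mbp^N$-chart is $\{w_0 \ne 0\}$, $\{w_1 \ne 0\}$, or $\{w_j \ne 0\}$ for some $j \ge 2$, and whether the $\mbp^1$-chart contains $t = 0$ or $t = \infty$.

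In each such chart $X$, I would write $\lhn|_X$ as a cyclic module $\msr_{\mba^1 \times X}(*\msd_X)/\msi$ with the left ideal $\msi$ generated by the $z$-connection relations $z\p_y - \p_y\widetilde\psi$, where $\widetilde\psi$ is the local expression of $\psi$ in the chart and $\msd_X = \mba^1 \times (H \cap X)$. In the chart $\{w_0 \ne 0\} \times \{t_0 \ne 0\}$, where $H \cap X = \{t = 0\}$ and $\widetilde\psi$ is polynomial, the module is directly $V_0\msr_{\mba^1 \times X}$-generated by $1$, so Remarks \ref{rem:CoherSpecAlongT} and \ref{rem:naivVSastLoc} apply at once. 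For the remaining charts, $H \cap X$ is a normal-crossing union of two coordinate divisors and we repeat the device of Lemma \ref{lem:naivVSright}: take the graph embedding $i_g: X \to \mba^1_\tau \times X$ along the product $g$ defining $H \cap X$ (for instance $g = x_0 \cdot t$ in $\{w_j \ne 0\} \times \{t_0 \ne 0\}$ with $j \ge 1$ and $x_0 = w_0/w_j$, or $g = x_0 \cdot s$ with $s = 1/t$ in the charts containing $t = \infty$), translate the generators of $\msi$ into their counterparts in $\msr_{\mba^1_\tau \times \msx}(*\tau)/\msj$, and use these relations to express $1/\tau^k$ and the iterated operators $(z\p_\tau)^k$ as elements of $V_0 \msr_{\mba^1_\tau \times \msx}$. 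This exhibits $i_{g,+}(\lhn|_X)$ as a $V_0 \msr$-coherent cyclic module; Remark \ref{rem:CoherSpecAlongT} yields strict specializability with trivial $V$-filtration, and \cite[\S~3.3.1.1]{Mo13} together with Remark \ref{rem:naivVSastLoc} give $(\lhn[*H])|_X = \lhn|_X$.

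The main obstacle is the bookkeeping in the ``corner'' chart $\{w_1 \ne 0\} \times \{t_1 \ne 0\}$ (with coordinates $x_0, x_2, \ldots, x_N, s$), where $\widetilde\psi = \frac{1}{s x_0} + \frac{x_2 + \ldots + x_N}{x_0}$ has poles along both components of $H \cap X = \{x_0 \cdot s = 0\}$ simultaneously, and the graph embedding is taken along the product $x_0 \cdot s$. The calculation follows the same pattern as in Lemma \ref{lem:naivVSright}, but the coefficients $\p_y \widetilde\psi$ differ from those of $\varphi$, so the inductive step expressing $(z\p_\tau)^k$ as an element of $V_0 \msr$ must be redone from scratch. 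I expect this last step to be mechanical but the most error-prone part of the argument.
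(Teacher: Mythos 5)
Your plan faithfully mirrors the strategy of Lemma~\ref{lem:naivVSright} (cover $H$ by charts, pass to a graph embedding along the local equation of $H$, exhibit $V_0\msr$-coherence, and conclude by Remarks~\ref{rem:CoherSpecAlongT}, \ref{rem:naivVSastLoc} and \cite[\S~3.3.1.1]{Mo13}), and for the ``corner'' charts where $\psi$ genuinely acquires poles along both components, your sketch is the same as the paper's.

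However, there is a concrete gap in the chart $X_0:=\{w_0\neq 0,\,u\neq 0\}\cong \dA^N\times\dA^1_t$ that you single out, where you claim that the module is ``directly $V_0\msr$-generated by $1$''. There $H\cap X_0=\{t=0\}$, $\psi=w_1t+w_2+\cdots+w_N$ is holomorphic, and $\lhn|_{X_0}=\cO_{\dA^1_z\times X_0}(*t)\,e^{\psi/z}$. The logarithmic generators of $V_0\msr$ act on $e^{\psi/z}$ by
\[
z\p_{w_1}e^{\psi/z}=t\,e^{\psi/z},\qquad z\p_{w_j}e^{\psi/z}=e^{\psi/z}\ (j\ge 2),\qquad zt\p_t\,e^{\psi/z}=tw_1\,e^{\psi/z},
\]
all of which lie in $\cO\cdot e^{\psi/z}$; iterating never produces $t^{-1}e^{\psi/z}$. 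Hence $V_0\msr\cdot e^{\psi/z}=\cO e^{\psi/z}\subsetneq\cO(*t)e^{\psi/z}$, and in fact no finite set of sections of $\cO(*t)e^{\psi/z}$ can generate it over $V_0\msr$, since $V_0\msr$ can only keep or decrease the pole order along $t=0$. So Remarks~\ref{rem:CoherSpecAlongT} and~\ref{rem:naivVSastLoc} do not apply in this chart, and the argument you propose there does not establish $\lhn[*H]=\lhn$. (Pushing a bit further: the $V$-filtration along $t=0$ is nontrivial here, and $\msr\cdot V_0\lhn|_{X_0}=\cO e^{\psi/z}$, which is strictly smaller than $\lhn|_{X_0}$; compare with the standard fact that $\cO_{\dA^1_z\times\dA^1_t}(*t)$ is not the Rees module of the star extension of the constant Hodge module from $\dG_m$.)

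This is worth flagging regardless of your proof: the published proof only writes out the chart $\{w_2\neq 0,\,t\neq 0\}$ (the corner at $\{w_0=0\}\cup\{t=\infty\}$, where $\psi$ does have poles) and dismisses the rest with ``the other charts behave similarly.'' But the chart $X_0$ seeing only $\{t=0\}$, where $\psi$ has no pole, does \emph{not} behave similarly — precisely because the $V_0\msr$-coherence trick of Lemma~\ref{lem:naivVSright} rests on $\varphi$ (or $\psi$) having poles along every branch of the divisor, which forces the exhaustion of the negative powers. Your explicit (but incorrect) treatment of $X_0$ usefully exposes this. If the lemma is to hold as stated, that chart needs a different argument; if instead the intended divisor is only $\{w_0=0\}\cup\{u=0\}$ (i.e.\ one should really push forward from $\dA^N\times\dA^1_t$ rather than $\dA^N\times\dG_m$), then $X_0$ never enters and the rest of your reduction goes through.
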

\begin{proof}
We denote the coordinates on $\mbp^N \times \mbp^1$ by $((w_0:w_1:\ldots:w_n),(u:t))$, where the chart $\mba^N \times \mbg_m$ is embedded via the map $j: (w_1,\ldots,w_N,t) \mapsto((1:w_1:\ldots:w_N),(1:t))$. We will assume $N\geq3$ and consider the chart $X:=\{w_2 \neq 0,u \neq 0\}$; the other charts behave similarly, as well as the case $N=1,2$. The chart $X$ is embedded as $(x_1,\ldots,x_N,u) \mapsto ((x_1:x_2:1:x_3:\ldots:x_N),(u:1))$. On this chart the map $\psi$ is given by $\frac{1}{x_1}(\frac{x_2}{u} +1 + x_3+ \ldots+ x_N)$. Set $\msh_X:= \dA^1_s \times ( H \cap X) = \dA^1_s \times \{x_1\cdot   u = 0\}$. The module $(\lhn)_{\mid X}$ is a cyclic $\msr_{\msx}(*\msh_X)$-module $\msr_\msx(*\msh_X)/ \msi$, where the left ideal $\msi$ is generated by
$$
z\p_{x_1} + \frac{1}{x_1^2}\left(\frac{x_2}{u}+1+x_3+\ldots+x_N \right), \qquad z \p_{x_2}-\frac{1}{x_1u}, \qquad z \p_{x_j} - \frac{1}{x_1}, \qquad z \p_u+ \frac{x_2}{x_1 u^2}, $$
with $j \geq 3$. Consider the map $i_g: X \ra \dA^1_s \times X$ given by
\[
(x_1,\ldots,x_N,u) \mapsto (x_1 \cdot u, x_1, \ldots, x_N,u)\, .
\]
Analogously as in Lemma \ref{lem:naivVSright}, the direct image $i_{g,+} (\msr_\msx(*H_X) / \msj)$  is a cyclic $\msr_{\dA^1_s \times \msx}(*(\dA^1_s\times \msh_X))$-module $\msr_{\dA^1_s \times \msx}(*(\dA^1_s\times \msh_X))/\msj'$ where $\msj'$ is the left ideal generated by
\begin{align*}
&z\p_{x_1} + u z\p_s + \frac{1}{x_1^2}\left(\frac{x_2}{u}+1+x_3+\ldots+x_N \right), \qquad z \p_{x_2}-\frac{1}{x_1u}, \qquad z \p_{x_j} - \frac{1}{x_1}, \\
&z \p_u+ x_1z\p_s + \frac{x_2}{x_1 u^2}, \qquad s - x_1 u,
\end{align*}
and $j \geq 3$. Define the cyclic $\msr_{\dA^1_s \times \msx}(*s)$-module $\msr_{\dA^1_s \times \msx}(*s) /\msj$ where $\msj$ is generated by
\begin{align*}
&z\p_{x_1} + u z\p_s + \frac{1}{s^2}\left(x_2 u+u^2+x_3u^2+\ldots+x_Nu^2 \right), \qquad z \p_{x_2}-\frac{1}{s}, \qquad z \p_{x_j} - \frac{u}{s}, \\
&z \p_u+ x_1z\p_s + \frac{x_1x_2}{s^2}, \qquad s - x_1 u,
\end{align*}
where $j \geq 3$. We have the following $\msr_{\dA^1_s \times \msx}$-linear isomorphism
\begin{align*}
\msr_{\dA^1_s \times \msx}(*(\dA^1_s\times \msh_X))/\msj' &\lra \msr_{\dA^1_s \times \msx}(*s) /\msj \\
P \frac{1}{(x_1 u)^k} &\mapsto P \frac{1}{s^k}.
\end{align*}

Consider the $V$-filtration along $s=0$. The relations $\frac{1}{s^k}  = (z \p_{x_2})^k$,
\[
z\p_s =  - \frac{1}{s}\left( z + u z \p_u + \frac{x_2}{s}\right)=-z \cdot z\p_{x_2} - u z \p_u z \p_{x_2} - x_2 (z \p_{x_2})^2
\]
and a straightforward induction over $k$ for $(z \p_s)^k$ show that $i_{g,+} (\msr_\msx(*D_X) /\msj)$ is a coherent $V_0 \msr_{\dA^1_t \times \msx}$-module. As in the previous lemma, this shows the claim.

\end{proof}

It follows again from \cite[Prop. 3.3]{SabYu} that $\lh$ underlies an object $\lmtm \in \text{MTM}^{\text{int}}_{\text{alg}}(\mba^n \times \dG_m)$.

\begin{defn}$ $\\[-1em]
\begin{enumerate}
\item The Fourier-Laplace transformation with respect to the kernel $\psi$  in the category of algebraic $\mcr_{\mba^1 \times \mba^N}$-modules is defined as
\[
 \FL^\psi (\mcm) := \mch^0 q_+ \left((p^+ \mcm) \otimes \laa \right),
\]
for any $\mcm \in \text{Mod}(\mcr_{\mba^N})$.
\item Analogously, the Fourier-Laplace transformation with respect to the kernel $\psi$ in the category of algebraic, integrable twistor $\cD$-modules on $\dA^N$ is defined by
\[
\FL^\psi_{\textup{MTM}}(\mcm) := \mch^0 q_* \left( (p^* \mcm) \otimes \lmtm \right),
\]
for any $\mcm \in \textup{MTM}^{\text{int}}_{\text{alg}}(\mba^N)$.
\end{enumerate}
\end{defn}

We get the following result for the kernel $\psi$.

\begin{proposition}\label{prop:1dimFL}
Let $\cM\in\MTM^{\text{\emph{int}}}_{\text{\emph{alg}}}(\mba^N)$. Then
\[
\textup{For}_1( \FL^\psi_{\textup{MTM}}(\mcm)) =z^{1-N} \FL^\psi (\textup{For}_1(\mcm)) \qquad \text{and} \qquad \textup{For}_2( \FL^\psi_{\textup{MTM}}(\mcm)) = z^{-N}\FL^\psi (\textup{For}_2(\mcm))\, .
\]
\end{proposition}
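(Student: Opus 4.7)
The plan is to follow the proof of Proposition \ref{prop:FLnaivVSmtm} almost verbatim, with two substantive modifications reflecting the new geometric setting. First, the Fourier kernel is $\psi$, so $\kaa, \kh, \kmtm$ are replaced by $\laa, \lh, \lmtm$, and Lemma \ref{lem:adaptedVSright} plays the role of Lemma \ref{lem:naivVSright}. Second, and this is the source of the new factor $z^{1-N}$, the projection $p : \dA^N \times \dG_m \to \dA^N$ now has relative dimension $1$ instead of $N$ (while the relative dimension of $q$ remains $N$). Consequently \cite[\S~14.3.3.3]{Mo13} gives
\[
\For_1(p^*\cM) = z \cdot p^+(\For_1\cM), \qquad \For_2(p^*\cM) = p^+(\For_2\cM),
\]
and the extra factor of $z$ on the $\For_1$-side, compared with the $z^N$ of the $\varphi$-case, accounts precisely for the discrepancy between $z^{1-N}$ and $z^{-N}$.

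As in the proof of Proposition \ref{prop:FLnaivVSmtm}, it then suffices to check that for every $\cN \in \MTM^{\text{int}}_{\text{alg}}(\dA^N \times \dG_m)$ one has a natural isomorphism
\[
q_+\bigl(\For_i(\cN) \otimes \laa\bigr) \cong \For_i\bigl(q_*(\cN \otimes \lmtm)\bigr),
\]
with the $z^{-N}$-twist on the $\For_2$-side being absorbed on the way (from $\overline{q}_*$) exactly as in the $\varphi$-proof. Here $\lmtm$ is a bona fide object of $\MTM^{\text{int}}_{\text{alg}}(\dA^N \times \dG_m)$ by \cite[Prop.~3.3]{SabYu}.

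To establish this isomorphism, I would compactify along the diagram preceding the statement, writing
\[
\widehat j_+ q_+\bigl(\For_i(\cN) \otimes \laa\bigr) \cong \overline{q}_+ j_+ \bigl(\For_i(\cN) \otimes \laa\bigr) \cong \overline{q}_+ j_* \bigl(\For_i(\cN) \otimes \laa\bigr),
\]
where the second step uses Lemma \ref{lem:adaptedVSright} (asserting $\lh = \lhn[*H] = \lhn$) combined with \cite[\S~3.3.1.1]{Mo13}. Passing to analytifications, \cite[Prop.~3.3]{SabYu} identifies $\bigl(j_*(\For_i(\cN) \otimes \laa)\bigr)^{\text{an}}$ with $\For_i(\overline{\cN} \otimes \overline{\cT}^{\psi/z})$, where $\overline{\cN}, \overline{\cT}^{\psi/z} \in \MTM^{\text{int}}(\dP^N \times \dP^1, [*H])$ are the canonical extensions. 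The de Rham description of $\overline{q}_*$ then yields an analytic isomorphism of $(\widehat j_+ q_+(\For_i(\cN) \otimes \laa))^{\text{an}}$ with $\For_i(\overline{q}_*(\overline{\cN} \otimes \overline{\cT}^{\psi/z}))$, and Lemma \ref{lem:Moch} finally upgrades this to a unique algebraic isomorphism. Goodness, required by that lemma, follows from Lemma \ref{lem:adaptedVSright} together with \cite[Thm.~14.4.15]{Mo13}.

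The main (and really only) obstacle I anticipate is the careful bookkeeping of the powers of $z$ produced by the pullback and pushforward in the MTM versus $\cR$-module frameworks. Once these shifts are correctly matched with the conventions of \cite{Mo13}, the remainder of the argument transports verbatim from the proof of Proposition \ref{prop:FLnaivVSmtm}.
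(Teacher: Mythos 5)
Your proposal takes essentially the same route as the paper: the paper's proof consists precisely of invoking \cite[\S~14.3.3.3]{Mo13} to get $\For_1(p^*\cM) = z\,p^+(\For_1\cM)$ and $\For_2(p^*\cM) = p^+(\For_2\cM)$ (due to $p$ now having relative dimension $1$), and then observing that the remainder transports verbatim from Proposition~\ref{prop:FLnaivVSmtm} with Lemma~\ref{lem:adaptedVSright} in place of Lemma~\ref{lem:naivVSright}. Your explicit unwinding of the compactification-plus-GAGA chain is the same argument as in Proposition~\ref{prop:FLnaivVSmtm}, which the paper simply cites rather than repeats.
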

\begin{proof}
We have, by \cite[\S~14.3.3.3]{Mo13},
\[
\For_1(p^*(\mcm)) = z p^+ (\For_1(\mcm)) \qquad \text{and} \qquad \For_2(p^*(\mcm)) = p^+ (\For_2(\mcm)) .
\]
The rest of the proof carries over almost word for word from Proposition \ref{prop:FLnaivVSmtm}, using Lemma \ref{lem:adaptedVSright}.
\end{proof}

\section{GKZ systems and irregular Hodge modules}\label{sec:TorusHodge}

Let $A=(a_{ki})$ be a $d \times N$ integer matrix with columns $(\underline{a}_1,\ldots, \underline{a}_N)$. We define
\[
\mbn A := \sum_{i=1}^N \mbn \underline{a}_i \subset \mbz^d
\]
and similarly for $\mbz A$ and $\mbr_{\geq 0} A$. Throughout this section we assume
\[
\mbz A = \mbz^d \qquad \text{and} \qquad \mbn A = \mbz^d \cap \mbr_{\geq 0} A\, .
\]
Set $\mba^N := \Spec (\mbc[w_1,\ldots,w_N])$ and $\widehat{\mba}^N := \Spec (\mbc[\lambda_1,\ldots,\lambda_N] )$ and
$$\mbl_A := \left\{ \ell = (\ell_1,\ldots, \ell_N) \in \mbz^N \,:\, \sum_{i=1}^N \ell_i \underline{a}_i\right\}.$$
\begin{definition}
The GKZ-hypergeometric system $\mcm^\beta_A$ is the cyclic $\mcd_{\widehat{\mba}^N}$-module $\mcd_{\widehat{\mba}^N} / \mci$, where $\mci$ is the left ideal generated by
\[
E_k := \sum_{i=1}^N a_{ki} \lambda_i \p_{\lambda_i} -\beta_k\,, \text{ for }k=1,\ldots,d,
\]
and
\[
\Box_\ell :=\prod_{\ell_i > 0} \p_{\lambda_i}^{\ell_i} - \prod_{\ell_i < 0} \p_{\lambda_i}^{- \ell_i}\,, \text{ for }l\in\dL_A.
\]
\end{definition}

The GKZ-hypergeometric system $\mcm^\beta_A$ is the Fourier-Laplace transform of the cyclic $\mcd_{\mba^N}$-module $\check{\mcm}^\beta_A := \mcd_{\mba^N} / \mcj$, where $\mcj$ is the left ideal generated by
\[
\check{E}_k := \sum_{i=1}^N a_{ki} \p_{w_i} w_i + \beta_k\,, \text{ for }k=1,\ldots,d,
\]
and
\[
\check{\Box}_\ell :=\prod_{\ell_i > 0} w_i^{\ell_i} - \prod_{\ell_i < 0} w_i^{- \ell_i}\,, \text{ for }l\in\dL_A.
\]

The semigroup ring $\mbc[\mbn A] \subset \mbc[t_1^\pm,\ldots, t_d^\pm]$ is naturally a $\mbc[w_1,\ldots, w_N]$-module under the isomorphism
\begin{align*}
\mbc[w_1,\ldots, w_N]/ ((\check{\Box}_\ell)_{\ell \in \mbl_A}) &\lra \mbc[\mbn A] \\
w_i & \longmapsto t^{\underline{a}_i},
\end{align*}
where we are using the multi-index notation $t^{\underline{a}_i} := \prod_{k=1}^d t_k^{a_{ki}}$. We set $S_A := \mbc[\mbn A]$. Notice that the rings $\mbc[w_1,\ldots, w_N]$ and $S_A$ carry a natural $\mbz^d$-grading given by $\deg(w_i) = \underline{a}_i$. This is compatible with the grading on the Weyl algebra $D_{\mba^N}:= \Gamma(\mba^N, \mcd_{\mba^N})$ given by $\deg(w_i) = \underline{a}_i$ and $\deg(\p_{w_i}) = - \underline{a}_i$.

\begin{definition}(\hspace{-.5pt}\cite[Def. 5.2]{MillerWaltherMat})
Let $P$ be a finitely generated $\mbz^d$-graded $\mbc[w_1,\ldots,w_N]$-module. An element $\alpha \in \mbz^d$ is called a true degree of $P$ if the graded part $P_\alpha$ is non-zero. A vector $\alpha \in \mbc^d$ is called a quasi-degree of $P$ if $\alpha$ lies in the complex Zariski closure $qdeg(P)$ of the true degrees of $P$ via the natural embedding $\mbz^d \hookrightarrow \mbc^d$.
\end{definition}
Consider the set of strongly resonant parameters of $A$:
\[
sRes(A) := \bigcup_{j=1}^N sRes_j(A),
\]
where
\[
sRes_j(A) := \{\beta \in \mbc^d \mid \beta \in -(\mbn+1)\underline{a}_j + qdeg(S_A/(t^{\underline{a}_j}))\}.
\]
Consider as well the torus $\mbg^d_m := \Spec(\mbc[t_1^\pm,\ldots,t_d^\pm])$, together with the torus embedding
\begin{align*}
h: \mbg^d_m &\longrightarrow \mba^N \\
(t_1,\ldots, t_d) &\mapsto (t^{\underline{a}_1},\ldots, t^{\underline{a}_N}).
\end{align*}
The following proposition is a slight generalization of the results of Schulze and Walther \cite[Thm. 3.6, Cor. 3.8]{SchulWalth2}.

\begin{proposition}\emph{(\hspace{-.5pt}\cite[Prop. 2.11]{RS15})}\label{prop:DirectImage}
Let $A$ be a $d\times N$ integer matrix satisfying $\mbz A =\mbz^d$ and $\mbn A = \mbz^d \cap \mbr_{\geq 0}A$. Assume that $\beta \not \in sRes(A)$. Then
\[
\mch^0\left(h_+ \mco_{\mbg^d_m}^\beta \right) \cong \check{\mcm}^\beta_A,
\]
where $\mco_{\mbg^d_m}^\beta \cong \mcd_{\mbg^d_m} /\mcd_{\mbg^d_m} \cdot(\p_{t_1}t_1+\beta_1,\ldots,\p_{t_d} t_d + \beta_d)$
\end{proposition}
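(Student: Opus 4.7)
The plan is to factor $h$ as $\pi \circ \gamma$, where $\gamma : \mbg^d_m \hookrightarrow \mbg^d_m \times \mba^N$ is the graph embedding $t \mapsto (t,h(t))$ and $\pi : \mbg^d_m \times \mba^N \to \mba^N$ is the second projection. Then $h_+ \mco_{\mbg^d_m}^\beta = \pi_+ \gamma_+ \mco_{\mbg^d_m}^\beta$, and both steps are concrete: by Kashiwara's equivalence, $\gamma_+ \mco^\beta$ is the cyclic $\mcd_{\mbg^d_m \times \mba^N}$-module supported on the graph, with generator subject to the $\mco^\beta$-relations together with the graph equations $w_i = t^{\underline{a}_i}$; and $\pi_+$ is computed by the relative de Rham / Koszul complex, which (since $\pi$ is affine) reduces to a plain Koszul complex of $\mcd_{\mba^N}$-modules after taking global sections.

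First I would construct a natural surjection $\check{\mcm}^\beta_A \twoheadrightarrow \mch^0(h_+ \mco_{\mbg^d_m}^\beta)$ by sending the cyclic generator to the distinguished section coming from $1 \in \mco_{\mbg^d_m}^\beta$. For this I must verify that the operators $\check{E}_k$ and $\check{\Box}_\ell$ annihilate this section. The box relations hold because $\prod_{\ell_i > 0} w_i^{\ell_i} = \prod_{\ell_i > 0} t^{\ell_i \underline{a}_i} = \prod_{\ell_i < 0} t^{-\ell_i \underline{a}_i} = \prod_{\ell_i < 0} w_i^{-\ell_i}$ for $\ell \in \mbl_A$, and the Euler relations $\check{E}_k$ follow by combining the chain rule $\p_{t_k} = \sum_i a_{ki}(t^{\underline{a}_i}/t_k)\,\p_{w_i}$ along the graph with the defining relation $\p_{t_k} t_k \cdot 1 = -\beta_k \cdot 1$ of $\mco^\beta$.

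The heart of the argument is upgrading this surjection to an isomorphism under the hypothesis $\beta \notin sRes(A)$. The natural framework is the Euler--Koszul homology of Matusevich--Miller--Walther: $\check{\mcm}^\beta_A$ is the zeroth Euler--Koszul homology $\mch_0(E-\beta; S_A)$, and one can identify $\mch^0(h_+ \mco_{\mbg^d_m}^\beta)$ with the zeroth Euler--Koszul homology of the $\mbz^d$-graded module $\mbc[\mbz A]$, the localization of $S_A = \mbc[\mbn A]$ obtained by inverting every $t^{\underline{a}_i}$. The comparison map is induced by the inclusion $S_A \hookrightarrow \mbc[\mbz A]$, and its cone is controlled by the Euler--Koszul homology of the quotient $\mbc[\mbz A]/S_A$. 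The hypothesis $\beta \notin \bigcup_j sRes_j(A)$ is designed precisely so that $-\beta$ avoids all the quasi-degrees of the relevant local cohomology modules, and the Matusevich--Miller--Walther vanishing theorem then forces the corresponding higher Euler--Koszul homology to vanish, yielding the isomorphism on $\mch^0$.

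The main obstacle is exactly this last step, i.e.\ showing that the kernel of the natural surjection $\mcd_{\mba^N} \twoheadrightarrow \mch^0(h_+ \mco_{\mbg^d_m}^\beta)$ is generated only by the $\check{E}_k$ and $\check{\Box}_\ell$. Surjectivity is formal, but injectivity requires careful control of contributions from the toric boundary $\overline{h(\mbg^d_m)} \setminus h(\mbg^d_m)$, and the set $sRes(A)$ is precisely the locus of parameters where such contributions obstruct the isomorphism. The slight generalization over Schulze--Walther's original statement corresponds to the sharper componentwise description of $sRes(A)$ via the $sRes_j(A)$.
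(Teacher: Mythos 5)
The paper does not actually give a proof here; the proposition is quoted from \cite[Prop.~2.11]{RS15} and flagged as a mild generalization of Schulze--Walther \cite[Thm.~3.6, Cor.~3.8]{SchulWalth2}. Your sketch follows the route taken in those references: identify both sides with Euler--Koszul homology of $\mbz^d$-graded modules, compare $S_A\hookrightarrow\mbc[\mbz A]$, and use the Matusevich--Miller--Walther vanishing criterion. That is the right framework, and the graph factorization and the verification of the $\check{\Box}_\ell$ and $\check{E}_k$ relations are routine bookkeeping.

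Two points, however, would not survive as written. First, the claim that ``surjectivity is formal'' is not correct. The ring $\mbc[\mbz A]$ is an infinite localization of $S_A$ and is in particular not finitely generated, let alone cyclic, over $\mbc[w_1,\ldots,w_N]$; consequently the distinguished section coming from $1\in\mco^\beta_{\mbg^d_m}$ has no a priori reason to generate $\mch^0\bigl(h_+\mco^\beta_{\mbg^d_m}\bigr)\simeq\mch_0(E-\beta;\mbc[\mbz A])$ as a $\mcd_{\mba^N}$-module. In the long exact sequence attached to $0\to S_A\to\mbc[\mbz A]\to Q\to 0$, surjectivity of the comparison map is equivalent to $\mch_0(E-\beta;Q)=0$, and injectivity amounts to the boundary map out of $\mch_1(E-\beta;Q)$ being zero; both assertions rely on $\beta\notin sRes(A)$. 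The vanishing controls cokernel and kernel simultaneously, not injectivity alone.

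Second, you pass silently from the quasi-degrees of $Q=\mbc[\mbz A]/S_A$ to the quantities $-(\mbn+1)\underline{a}_j+\qdeg\bigl(S_A/(t^{\underline{a}_j})\bigr)$ that define $sRes_j(A)$. The MMW vanishing criterion applies to toric, i.e.\ finitely generated $\mbz^d$-graded, modules, and $Q$ is not one. One has to resolve the localization of $S_A$ at $\prod_j t^{\underline{a}_j}$ by a \v{C}ech/telescope-type complex whose successive quotients are twists of the modules $S_A/(t^{\underline{a}_j})$; each newly adjoined power $t^{-(k+1)\underline{a}_j}$ contributes a shift by $-(k+1)\underline{a}_j$, and this is precisely where the translate by $-(\mbn+1)\underline{a}_j$ in $sRes_j(A)$ originates. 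Making this step precise is where essentially all the content of the Schulze--Walther argument (and of the sharpened componentwise version of $sRes$) sits, and it cannot simply be subsumed under ``the MMW vanishing theorem.''
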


For $\beta \in \mbr^d$, the $\mcd$-module $\mco^\beta_{\mbg^d_m}$ underlies the complex mixed Hodge module ${^p}\mbc^{H,\beta}_{\mbg^d_m}$. Hence for $\beta \in \mbr^d \setminus sRes(A)$ the $\mcd$-module $\check{\mcm}^\beta_A$ underlies the complex mixed Hodge module $\mch^0 h_* {^p}\mbc^{H,\beta}_{\mbg^d_m}$. The Hodge filtration on $\check{\mcm}^\beta_A$ can be explicitly computed, provided that $\beta$ belongs to a certain set $\mathfrak{A}_A$ of so-called admissible parameters $\beta$. We recall its definition from \cite[p. 11]{RS15}: Let $\underline{c}:= \underline{a}_1+\ldots+\underline{a}_N$ and define for all facets $F$ of $\mbr_{\geq 0}A$ the uniquely determined primitive, inward-pointing, normal vector $\underline{n}_F$ of $F$, such that $\langle \underline{n}_F,F\rangle = 0$ and $\langle \underline{n}_F,\mbn A \rangle \subset \mbz_{\geq 0}$. Set $e_F:= \langle \underline{n}_F,\underline{c}\rangle \in \mbz_{>0}$. The set of admissible parameters  of $A$ is then defined by
\[
\mathfrak{A}_A:= \bigcap_{F \text{ facet}} \{ \mbr \cdot F - [0, 1/e_F) \cdot \underline{c}\}\,.
\]

\begin{theorem}\emph{(\hspace{-.5pt}\cite[Thm. 3.16]{RS15})}\label{thm:ClosedImmersion}
For $\beta \in \mathfrak{A}_A$ the Hodge filtration on $\check{\mcm}^\beta_A$ is equal to the order filtration shifted by $N-d$, i.e.
\[
F^H_{p+{N-d}}\check{\mcm}^\beta_A = F_p^{\text{\emph{ord}}} \check{\mcm}^\beta_A\, .
\]
\end{theorem}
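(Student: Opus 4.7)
The strategy starts from Proposition~\ref{prop:DirectImage}, which identifies $\check{\mcm}^\beta_A$ with $\mch^0 h_+ \mco^\beta_{\mbg^d_m}$. Since $\beta \in \mathfrak{A}_A \subset \mbr^d$, the module $\mco^\beta_{\mbg^d_m}$ underlies a pure complex Hodge module whose Hodge filtration is concentrated in a single step, namely $F_0 = \mco^\beta_{\mbg^d_m}$ and $F_{-1}=0$. The plan is thus to compute the filtered direct image of this Hodge module along $h$ and to match the result with the order filtration shifted by $N-d$.

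First I would factor the torus embedding as $h = \overline{\imath}\circ j$, where $j:\mbg^d_m \hookrightarrow X_A$ is the open torus embedding into the affine toric variety $X_A := \Spec\mbc[\mbn A]$ and $\overline{\imath}: X_A \hookrightarrow \mba^N$ is the closed immersion of codimension $N-d$ cut out by the binomial relations $\check{\Box}_\ell$. Since $X_A$ is singular in general, I would then pass to a smooth toric resolution $\pi: X_\Sigma \to X_A$ whose boundary $D := X_\Sigma \setminus \mbg^d_m$ is a simple normal crossing divisor, and extend $\mco^\beta_{\mbg^d_m}$ to $X_\Sigma$ as the rank-one Deligne extension $\widetilde{\mco}^\beta$ with logarithmic connection $d + \sum_i \beta_i\, dt_i/t_i$. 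The crucial point is that under the admissibility hypothesis, this logarithmic extension carries a mixed Hodge module structure whose Hodge filtration is the naive filtration by orders of poles along $D$; this is exactly what the half-open-interval condition $\beta \in \mbr\cdot F - [0,1/e_F)\cdot\underline{c}$ encodes facet by facet.

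Next I would compute $\mch^0 \widetilde{h}_+ \widetilde{\mco}^\beta$ with $\widetilde{h} := \overline{\imath}\circ \pi$. Saito's filtered direct image formula, together with properness of $\widetilde{h}$ and a vanishing of higher direct images (which again relies on admissibility), realises the Hodge filtration on $\mch^0 \widetilde{h}_+ \widetilde{\mco}^\beta \cong \check{\mcm}^\beta_A$ as the image of a filtered relative log-de Rham complex. The shift $N-d$ appears naturally as the codimension contribution from the closed immersion $\overline{\imath}$. At the lowest step one checks that $F^H_{N-d}\check{\mcm}^\beta_A$ is the $\mco_{\mba^N}$-submodule generated by the class of the cyclic vector, which is exactly $F_0^{\text{ord}}\check{\mcm}^\beta_A$. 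Iterating the $\p_{w_i}$-action and using that the Euler relations $\check{E}_k$ and binomial relations $\check{\Box}_\ell$ are of orders compatible with the filtration, one propagates the identification to every $p$.

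The main obstacle is controlling the Hodge filtration of the logarithmic extension $\widetilde{\mco}^\beta$ and showing that it is preserved under pushforward along $\pi: X_\Sigma \to X_A$, uniformly across every toric stratum. The purpose of $\mathfrak{A}_A$ is precisely to guarantee this: the vector $\underline{c} = \sum_i \underline{a}_i$ measures the log-canonical shift, the integers $e_F = \langle \underline{n}_F, \underline{c}\rangle$ govern how the connection twist falls across each facet $F$, and the half-open intervals $[0,1/e_F)$ pick out those residues for which the log-extension is the minimal Hodge-theoretic one. Verifying that this really produces the order filtration, and not a strict quotient of it, requires a careful combinatorial and homological analysis across the strata, which is the technical heart of the argument.
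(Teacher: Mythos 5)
The paper does not prove this statement; it is imported verbatim from \cite[Thm.~3.16]{RS15}, so the question is whether your sketch would constitute a viable proof. It would not, as it stands, and the issues are concentrated precisely where you acknowledge ``the technical heart'' lies.

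First, the codimension shift does not ``appear naturally'' in your setup. You factor $h$ through the singular toric variety $X_A$, then replace $X_A$ by a resolution $X_\Sigma$. But the composite $\widetilde h=\overline\imath\circ\pi\colon X_\Sigma\to\dA^N$ is no longer a closed immersion --- it is a proper map with positive-dimensional fibres over the singular locus of $X_A$. For a closed immersion Saito's theory gives a clean filtration shift by codimension; for a general proper morphism the Hodge filtration on $\mch^0\widetilde h_*$ is the output of a strict filtered hypercohomology computation, and the shift $N-d$ is something to be proved, not read off. The whole point of resolving is defeated unless you also explain why $\bR\pi_*$ of the extension returns exactly $j_*\mco^\beta_{\mbg^d_m}$ on $X_A$ (which is not even a category you can work in for Hodge modules) or directly on $\dA^N$, and why the higher direct images $\mch^{>0}\widetilde h_*$ do not contaminate the filtration. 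Nothing in the admissibility hypothesis obviously forces these vanishings.

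Second, the description of the Hodge filtration on the extension as ``the naive filtration by orders of poles along $D$'' is not correct as stated. The Hodge filtration of $j_*$ of a unitary rank-one local system on the complement of an SNC divisor is computed from the Deligne canonical extension whose residues lie in a fixed half-open interval, via a formula of the type $F_pj_*\mco^\beta=\sum_{q\le p}F_q\cD\cdot(\text{Deligne lattice})$. This does involve pole orders, but the choice of lattice is governed by the $V$-filtration, and the role of $\fA_A$ is not to make this filtration ``naive'' on $X_\Sigma$ but to control how the $V$-filtrations along the coordinate hyperplanes $\{w_i=0\}\subset\dA^N$ interact with the order filtration on the GKZ module downstairs. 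That compatibility --- typically established by strictness of the Euler--Koszul complex computing $\check\cM^\beta_A$ --- is the actual content of the theorem, and your sketch replaces it with an unverified assertion that ``one propagates the identification to every $p$.'' As written, the proposal reduces the theorem to itself.
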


Let us define the cyclic $\mcr_{\mba^1 \times \mba^N}$-module $\check{\mcn}^\beta_A :=\mcr_{\mba^1 \times \mba^N}/ \mcj_z$, where $\mcj_z$ is the left ideal generated by
\[
\check{E}^z_k = \sum_{i=1}^N a_{ki} z\p_{w_i} w_i + z\beta_k\,, \text{ for }k=1,\ldots,d,
\]
and
\[
\check{\Box}_\ell =\prod_{\ell_i > 0} w_i^{\ell_i} - \prod_{\ell_i < 0} w_i^{- \ell_i}\,, \text{ for }l\in\dL_A.
\]
We will denote by $\check{M}^\beta_A := \Gamma(\mba^N, \check{\cM}^\beta_A)$ and $\check{N}^\beta_A := \Gamma(\mba^1 \times \mba^N, \check{\mcn}^\beta_A)$ the modules of global sections of $\check{\cM}^\beta_A$ and $\check{\mcn}^\beta_A$, respectively.

We will also consider the Rees module of $\check{M}^\beta_A$ with respect to the order filtration $F^{\text{ord}}_\bullet$, which is given by $R^{F^{\text{ord}}} \check{M}^\beta_A := \sum_{k \geq 0} z^k F^{\text{ord}}_k \check{M}^\beta_A$. An easy computation shows
$R^{F^{\text{ord}}} \check{M}^\beta_A = \check{N}^\beta_A$, hence
\begin{equation}\label{eq:ReesSW}
R^{F^H} \check{M}^\beta_A = z^{N-d} \check{N}^\beta_A.
\end{equation}

\begin{definition}
The $\mcr$-GKZ-hypergeometric system $\mcn^\beta_A$ is the cyclic $\mcr_{\mba^1 \times \widehat{\mba}^N}^{\text{int}}$-module $\mcr_{\mba^1 \times \widehat{\mba}^N}^{\text{int}} / \mci$, where the left ideal $\mci$ is generated by
\begin{align*}
&E^z_0 := z^2\p_z + \sum_{i=1}^N \lambda_i z\p_{\lambda_i},\\
&E^z_k := \sum_{i=1}^N a_{ki} \lambda_i z\p_{\lambda_i} -z\beta_k, \text{ for } k=1,\ldots,d,
\end{align*}
and
\[
\Box^z_\ell :=\prod_{\ell_i > 0} (z\p_{\lambda_i})^{\ell_i} - \prod_{\ell_i < 0} (z\p_{\lambda_i})^{- \ell_i}, \text{ for } \ell \in \mbl_A\, .
\]
\end{definition}

\begin{remark}
Note that, considering $\check\cN_A^\beta$ as an $\cR_{\dA^1\times\wh\dA^N}^{\text{int}}$-module with the trivial action of $z^2\dd_z$, $\cN_A^\beta$ is its Fourier-Laplace transform as $\cR_{\dA^1\times\wh\dA^N}^{\text{int}}$-modules, according to Remark \ref{rem:naivFL}.
\end{remark}

\begin{theorem}
Let $A$ be a $d\times N$-matrix and $\beta \in \mathfrak{A}_A$ an admissible parameter.  The $\mcr$-GKZ-hypergeometric system $z^{-d} \mcn^\beta_A$ underlies an algebraic, integrable, mixed twistor $\mcd$-module ${^\mct\!\!}\mcm^\beta_A$.
\end{theorem}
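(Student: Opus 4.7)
The plan is to realize $z^{-d}\cN_A^\beta$ as the underlying $\cR$-module of the Fourier-Laplace transform, taken in the category $\MTM^{\text{int}}_{\text{alg}}$, of a mixed twistor $\cD$-module on $\dA^N$ that is constructed geometrically from the torus embedding $h : \dG_m^d \to \dA^N$, with the comparison between the two Fourier-Laplace transformations supplied by Proposition \ref{prop:FLnaivVSmtm}.

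First, I would construct a mixed twistor $\cD$-module $\check{\cT}^\beta_A \in \MTM^{\text{int}}_{\text{alg}}(\dA^N)$ with $\For_1(\check{\cT}^\beta_A) = z^{-d}\check{\cN}^\beta_A$. For $\beta \in \mbr^d$, the $\cD$-module $\mco^\beta_{\dG_m^d}$ underlies the complex mixed Hodge module ${}^p\dC^{H,\beta}_{\dG_m^d}$, which canonically lifts to an integrable mixed twistor $\cD$-module on the torus. Pushing forward along $h$ in $\MTM^{\text{int}}_{\text{alg}}$ yields $\check{\cT}^\beta_A$; by Proposition \ref{prop:DirectImage}, its underlying $\cD$-module is $\check{\cM}^\beta_A$, provided $\mathfrak{A}_A \subset \mbr^d \setminus sRes(A)$ (a point to verify from the explicit descriptions of these sets). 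By Theorem \ref{thm:ClosedImmersion}, the Hodge filtration on $\check{\cM}^\beta_A$ coincides with the order filtration shifted by $N-d$, and equation (\ref{eq:ReesSW}) then gives $R^{F^H}\check{M}^\beta_A = z^{N-d}\check{N}^\beta_A$. After accounting for the Tate-type $z^{-N}$ twist implicit in the passage from an MHM on an $N$-dimensional base to the $\For_1$-component of its associated MTM (which is also the source of the $\For_1$/$\For_2$ asymmetry visible in the proof of Proposition \ref{prop:FLnaivVSmtm}, where $\For_1(p^*\cM) = z^N p^+(\For_1(\cM))$ while $\For_2(p^*\cM) = p^+(\For_2(\cM))$), one obtains the claimed $\For_1(\check{\cT}^\beta_A) = z^{-d}\check{\cN}^\beta_A$.

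Setting then ${}^\cT\cM^\beta_A := \FL_{\textup{MTM}}(\check{\cT}^\beta_A) \in \MTM^{\text{int}}_{\text{alg}}(\widehat{\dA}^N)$ and applying Proposition \ref{prop:FLnaivVSmtm} together with Remark \ref{rem:naivFL} (which identifies $\FL(\check{\cN}^\beta_A) = \cN^\beta_A$) yields the desired identity
\[
\For_1({}^\cT\cM^\beta_A) = \FL(\For_1(\check{\cT}^\beta_A)) = \FL(z^{-d}\check{\cN}^\beta_A) = z^{-d}\cN^\beta_A.
\]
The main obstacle lies in the second paragraph: carefully tracking the $z$-shifts relating the Hodge filtration, its Rees module, and the $\For_1$ functor on the associated MTM, so that the final normalization is exactly $z^{-d}$ rather than $z^{N-d}$. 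Secondary technicalities are confirming $\mathfrak{A}_A \cap sRes(A) = \emptyset$ and verifying that the MHM pushforward along $h$ lifts canonically to $\MTM^{\text{int}}_{\text{alg}}$, both of which should follow from existing Saito-Mochizuki theory.
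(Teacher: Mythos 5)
Your overall strategy is the same as the paper's: realize $z^{-d}\cN_A^\beta$ as the image under the compatible Fourier--Laplace functors of the mixed twistor $\cD$-module attached to the mixed Hodge module $\check\cM_A^\beta$. The genuine gap lies in your decision to work with the $\For_1$-component. The paper uses the identification $\For_2\bigl({}^\cT\check\cM_A^\beta\bigr)=R^{F^H}\check\cM_A^\beta$, which is exactly what the functoriality from $\MHM$ to $\MTM$ supplies: the second component of Mochizuki's triple is by construction the Rees module of the Hodge filtration. From there the computation is a one-liner: $\cN_A^\beta = \FL(\check\cN_A^\beta) = \FL\bigl(z^{d-N}R^{F^H}\check\cM_A^\beta\bigr) = z^{d-N}\FL\bigl(\For_2({}^\cT\check\cM_A^\beta)\bigr) = z^d\,\For_2\bigl(\FL_{\MTM}({}^\cT\check\cM_A^\beta)\bigr)$, whence $z^{-d}\cN_A^\beta=\For_2\bigl(\FL_{\MTM}({}^\cT\check\cM_A^\beta)\bigr)$ with no extra twist to account for.

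Your substitute identity $\For_1(\check\cT_A^\beta)=z^{-d}\check\cN_A^\beta$ would be equivalent, via \eqref{eq:ReesSW}, to $\For_1=z^{-N}\For_2$ for this particular MTM, and you justify it by analogy with the $z^N$ in $\For_1(p^*\cM)=z^N p^+\For_1(\cM)$. But that exponent $N$ is the \emph{fibre} dimension of the projection $p\colon\dA^N\times\widehat\dA^N\to\dA^N$, not the dimension of the base; it reflects a Tate twist built into the definition of the pullback functor for $\cR$-triples and says nothing about how $\For_1$ of the MHM-to-MTM functor behaves on a fixed variety. Moreover, $\check\cM_A^\beta$ underlies a \emph{complex} mixed Hodge module, and the two components of the associated triple record independent data (Hodge versus ``conjugate-Hodge'' filtrations on possibly different $\cD$-modules); for the pushforward of the rank-one system $\cO^\beta_{\dG_m^d}$ one expects $\For_1$ to be built from $\check\cM_A^{-\beta}$ rather than $\check\cM_A^\beta$, so it is not even the same $\cR$-module up to a $z$-power. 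Thus the assertion $\For_1=z^{-N}\For_2$ is unsubstantiated and almost certainly false; the step you flag as the ``main obstacle'' is indeed where the argument breaks. The fix is simply to switch to $\For_2$ and use the $\For_2$-variant of Proposition~\ref{prop:FLnaivVSmtm} (with its $z^{-N}$ factor), which is what the paper does.
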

\begin{proof}
By the Remark above, we know that $\mcn^\beta_A=\FL(\check\cN_A^\beta)$, which in turn, thanks to the choice of $\beta$, Theorem \ref{thm:ClosedImmersion} and formula \eqref{eq:ReesSW}, is equal to $\FL(z^{d-N}\cR^{F^H}\check\cM_A^\beta)$. Since $\cR^{F^H}\check\cM_A^\beta$ is the Rees module of a mixed Hodge module on $\dA^N$, it gives rise to an algebraic, integrable mixed twistor $\cD$-module on $\dA^N$, say ${^\cT\!\!}\check\cM_A^\beta$. Then we can apply Proposition \ref{prop:FLnaivVSmtm} and get
$$\mcn^\beta_A=z^{d-N}\FL\left(\For_2\left({^\cT\!\!}\check\cM_A^\beta\right)\right)= z^d\For_2\left(\FL_{\MTM}\left({^\cT\!\!}\check\cM_A^\beta\right)\right).$$
The result follows from writing ${^\mct\!\!}\mcm^\beta_A:=\FL_{\MTM}\left({^\cT\!\!}\check\cM_A^\beta\right)$.
\end{proof}

\begin{corollary}
The analytification of ${^\mct\!\!}\mcm^\beta_A$ gives rise to an irregular mixed Hodge module on $\dA^N$ which has a natural extension to an $\mcr^{\text{\emph{int}}}_{\dA^1 \times \mbp^N}$-module underlying an object of $\IrrMHM(\mbp^N)$.
\end{corollary}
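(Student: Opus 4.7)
My plan is to derive the corollary directly from the construction of ${^\mct\!\!}\mcm^\beta_A$ as $\FL_{\MTM}({^\cT\!\!}\check\cM_A^\beta)$ given in the proof of the preceding theorem, together with Sabbah's stability result for $\IrrMHM$ under Fourier-Laplace transformation.

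First I would observe that the input object ${^\cT\!\!}\check\cM_A^\beta$ is, by its very construction, the integrable mixed twistor $\cD$-module arising from the Rees module $\cR^{F^H}\check\cM_A^\beta$ of the complex mixed Hodge module $\check\cM_A^\beta$ (whose Hodge filtration was identified with the shifted order filtration in Theorem \ref{thm:ClosedImmersion}). Since a (complex) mixed Hodge module is tautologically an irregular mixed Hodge module via the canonical fully faithful embedding $\MHM \hookrightarrow \IrrMHM$ in Sabbah's framework, we obtain ${^\cT\!\!}\check\cM_A^\beta \in \IrrMHM(\dA^N)$.

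Next I would invoke Sabbah's theorem from \cite{Sa15} that the category $\IrrMHM$ is stable under the twistor Fourier-Laplace transformation. To apply this to the $\FL_{\MTM}$ from Section 3, one checks that it coincides with Sabbah's Fourier-Laplace on $\IrrMHM$: both are constructed as a direct image along the second projection after tensoring with the exponentially twisted object $\kmtm$, which itself carries a canonical irregular Hodge structure by \cite[Prop.~3.3]{SabYu}, and their compatibility on underlying $\msr$-modules is exactly the content of Proposition \ref{prop:FLnaivVSmtm}. Granted this identification, ${^\mct\!\!}\mcm^\beta_A = \FL_{\MTM}({^\cT\!\!}\check\cM_A^\beta)$ lies in $\IrrMHM$ on the dual affine space, which yields the first assertion.

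For the second assertion I would use the definition \eqref{eq:defMTMalg}, which identifies $\MTM^{\text{int}}_{\text{alg}}(\dA^N)$ with $\MTM^{\text{int}}(\mbp^N,[*\infty])$; the same compactification convention applies verbatim to Sabbah's algebraic $\IrrMHM$. Hence the object upgraded in the previous step is automatically an object of $\IrrMHM(\mbp^N)$ whose underlying analytic $\msr^{\text{int}}_{\mbp^1\times\mbp^N}(*\infty)$-module corresponds, by Lemma \ref{lem:Moch}, to a unique algebraic $\mcr^{\text{int}}_{\dA^1\times\mbp^N}(*\infty)$-module. This provides the claimed natural algebraic extension across infinity.

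The main obstacle I anticipate is the identification of the Fourier-Laplace operation $\FL_{\MTM}$ defined here with the one employed in Sabbah's stability theorem for $\IrrMHM$; once this identification is in place, the remainder of the argument reduces to bookkeeping through the chain of embeddings $\MHM \hookrightarrow \IrrMHM \hookrightarrow \MTM^{\text{int}}_{\text{alg}}$ and through the definition of the algebraic categories by projective compactification.
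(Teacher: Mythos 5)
Your proposal is essentially the paper's argument, which is stated in a single sentence: apply \cite[Cor.~0.5]{Sa15} to the operations used to construct ${^\mct\!\!}\mcm^\beta_A$. You correctly unpack this: ${^\cT\!\!}\check\cM_A^\beta$ comes from a mixed Hodge module and so lies in $\IrrMHM(\dA^N)$; $\kmtm$ is an exponential twist in $\IrrMHM$ by \cite[Prop.~3.3]{SabYu}; and the smooth pullback $p^*$, tensor with $\kmtm$, and projective pushforward $q_*$ entering $\FL_{\MTM}$ are each covered by Sabbah's stability theorem. Your treatment of the second assertion via \eqref{eq:defMTMalg} and Lemma \ref{lem:Moch} also matches what is implicit in the paper.

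One small correction to the framing: you describe the crux as verifying that ``$\FL_{\MTM}$ coincides with Sabbah's Fourier-Laplace on $\IrrMHM$'' and point to Proposition \ref{prop:FLnaivVSmtm} for this. That proposition compares $\FL_{\MTM}$ (at the level of twistor modules) with the naive $\cR$-module Fourier-Laplace $\FL$, not with a separately defined functor on $\IrrMHM$. The cleaner way to invoke \cite[Cor.~0.5]{Sa15} is not to match up two Fourier transforms, but simply to observe that each constituent operation $p^*$, $\otimes\,\kmtm$, $q_*$ (after projective compactification) individually preserves the subcategory $\IrrMHM\subset\MTM^{\text{int}}$, and that $\FL_{\MTM}$ is by definition their composition. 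With that adjustment your proof is correct and in line with the paper.
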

\begin{proof}
This follows from applying \cite[Cor. 0.5]{Sa15} to the operations performed to get ${^\mct\!\!}\mcm^\beta_A$.
\end{proof}

\section{Application to confluent hypergeometric systems}

In this section we are going to use the results achieved so far for
the special case of the matrix
$$A=\left(\begin{array}{c|c|c}
\ul{1}_m & \ul{0}_{m\times(n-1)} & \operatorname{Id}_m\\[3pt]
\hline & & \vspace{-10pt}\\
\ul{1}_{n-1} & -\operatorname{Id}_{n-1} & \ul{0}_{(n-1)\times m}\end{array}\right).$$

For the sake of simplicity, we will write $N=n+m$ in the following. Before going on, let us introduce the main object of study of this section and state some of its basic properties, extending what we mentioned in the introduction.

\begin{defi}\label{def:ClassicHyp}
Let $(n,m)\neq(0,0)$ be a pair of nonnegative integers, and let $\alpha_1,\ldots,\alpha_n$ and $\beta_1,\ldots,\beta_m$ be elements of $\dC$. The hypergeometric $\cD$-module of type $(n,m)$ associated with the $\alpha_i$ and the $\beta_j$ is defined as the quotient of $\cD_{\dG_{m}}$ by the left ideal generated by the so-called hypergeometric operator
$$\prod_{i=1}^n(t\dd_t-\alpha_i)-t\prod_{j=1}^m(t\dd_t-\beta_j).$$
We will denote it by $\cH(\alpha_i;\beta_j)$.
\end{defi}

\begin{prop}\label{prop:factsClassicHyp}
Let $\cH:=\cH(\alpha_i;\beta_j)$ be a hypergeometric $\cD$-module of type $(n,m)$, and let $\eta$ be any complex number. Then we have the following:
\begin{enumerate}
\item If we denote the Kummer $\cD$-module $\cD_{\dG_{m}}/(t\dd_t-\eta)$ by
$\cK_\eta$, then $\cH\otimes_{\cO_{\dG_{m}}}\cK_\eta\cong \cH(\alpha_i+\eta;\beta_j+\eta)$. In particular, an overall integer shift of the parameters gives us an isomorphic $\cD$-module.
\item $\cH$ is irreducible if and only if for any pair $(i,j)$ of indices, $\alpha_i-\beta_j$ is not an integer.
\item If $\cH$ is irreducible, its isomorphism class depends only on the classes modulo $\dZ$ of the $\alpha_i$ and the $\beta_j$, so we can choose such parameters on a fundamental domain of $\dC/\dZ$.
\end{enumerate}
\end{prop}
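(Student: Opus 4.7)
The plan is to handle (1) by a direct algebraic computation and then to derive (2) and (3) from the classical theory of hypergeometric $\cD$-modules of Beukers--Heckman and Katz, with (1) providing the key shifting input.

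For (1), I would introduce cyclic generators $u$ and $v$ for $\cH$ and $\cK_\eta$ respectively and set $\theta := t\dd_t$. From $\theta v = \eta v$ and the Leibniz rule,
\[
\theta(u\otimes v) = ((\theta+\eta)u)\otimes v,
\]
while $t$ acts on $u\otimes v$ through its action on $u$. The substitution $\theta \mapsto \theta+\eta$ preserves the commutation relation $\theta t = t(\theta+1)$, so it extends to an automorphism of the subalgebra $\dC\langle\theta,t\rangle \subset \cD_{\dG_m}$, and by induction $Q(\theta,t)(u\otimes v) = (Q(\theta+\eta,t)u)\otimes v$ for every such $Q$. Taking $Q=P'$ to be the hypergeometric operator associated with $(\alpha_i+\eta;\beta_j+\eta)$, the substitution recovers $P'(\theta+\eta,t)=P(\theta,t)$, which annihilates $u$; hence $P'$ annihilates $u\otimes v$. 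Since tensoring with $\cK_\eta$ is an autoequivalence of $\cD_{\dG_m}$-modules (with inverse $\cK_{-\eta}$) and $\cH$ is cyclic, the resulting surjection $\cD_{\dG_m}/\cD_{\dG_m}P' \twoheadrightarrow \cH\otimes\cK_\eta$, $1\mapsto u\otimes v$, is an isomorphism. The ``in particular'' statement then follows because, for $\eta\in\dZ$, the assignment $1\mapsto t^\eta v$ is a well-defined $\cD$-linear isomorphism $\cO_{\dG_m}\to\cK_\eta$.

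For (2), I would cite Beukers--Heckman in the Fuchsian case $n=m$ and Katz in general. The reducibility direction follows from (1): if some $\alpha_i-\beta_j$ is an integer, uniformly shift parameters via (1) to reduce to $\alpha_i=\beta_j$, and then exhibit a proper sub-$\cD$-module. A toy instance is $n=m=1$, $\alpha_1=\beta_1=0$, where $P = (1-t)\theta$ and one sees a skyscraper submodule at $t=1$; the general case follows the same pattern, with care taken for the commutation $t(\theta-\alpha_i)=(\theta-\alpha_i-1)t$. The irreducibility direction under the noninteger-difference hypothesis rests on the local analysis at $0$ and $\infty$ (exponents given by the $\alpha_i$ at $0$, Stokes/formal data by the $\beta_j$ at $\infty$), and is the main technical black box I would import.

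Finally, (3) follows from (1) and (2) by rigidity: an irreducible hypergeometric $\cD$-module on $\dG_m$ is determined up to isomorphism by its generic rank and its local data at $0$ and $\infty$, and these local invariants depend only on the $\alpha_i$ and $\beta_j$ modulo $\dZ$. Concretely, to move between $\cH(\ldots,\alpha_i,\ldots;\vec\beta)$ and $\cH(\ldots,\alpha_i+1,\ldots;\vec\beta)$, one can either invoke Katz-style rigidity directly or construct explicit contiguity intertwiners and observe that, by irreducibility and equality of generic ranks, any nonzero such intertwiner is an isomorphism. The hard part, as in (2), is the underlying classification/rigidity theorem for irreducible hypergeometrics, which we would import from Katz's \emph{Exponential sums and differential equations} rather than reprove here.
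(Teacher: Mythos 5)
Your proposal takes essentially the same route as the paper: a direct tensor-product computation for (1), and citations to Katz's \emph{Exponential sums and differential equations} (Propositions 2.11.9 and 3.2) for (2) and (3), with the paper itself only stating ``a simple calculation shows point 1'' and deferring the rest to Katz. One small slip: the ``in particular'' isomorphism $\cO_{\dG_m}\to\cK_\eta$ for $\eta\in\dZ$ should send $1\mapsto t^{-\eta}v$ rather than $1\mapsto t^{\eta}v$, since one needs $\theta(fv)=(\theta f+\eta f)v=0$, i.e. $f=t^{-\eta}$.
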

\begin{proof}
A simple calculation shows point 1. Point 2 follows from \cite[Prop. 2.11.9, 3.2]{Ka}, whereas point 3 is part of [ibid., Prop. 3.2].
\end{proof}

As we mentioned in the introduction, we can express any one-dimensional hypergeometric $\cD$-module as the inverse image of a GKZ hypergeometric $\cD$-module (cf. \cite[Cor. 2.9]{CDS}). Notice
that there is a similar statement at the level of $\cR$-modules (see [ibid., Lem. 2.12]), yielding
a description of the $\cRint_{\dA^1_z\times \dG_{m,t}}$-module $\wh\cH$ from Theorem \ref{thm:1dimhypgeo} below
as an inverse image of a GKZ-hypergeometric $\cR$-module (as defined in [ibid., Def. 2.10]).

\begin{proposition}\label{prop:hypGKZ}
Let $\cH(\alpha_i;\beta_j)$ be a hypergeometric $\cD_{\dG_m}$-module of type $(n,m)$ with $\alpha_1=0$, let $A\in\operatorname{M}((N-1)\times N,\dZ)$ as right above, and let $\gamma=(\beta_1,\ldots,\beta_m,\alpha_2,\ldots,\alpha_n)^{\operatorname{t}}$. Let $\iota:\dG_m\ra\dA^N$ be given by $t\mapsto(t,1\ldots,1)$. Then
$$\cH(\alpha_i;\beta_j)\cong\iota^+\cM_A^\gamma.$$
\end{proposition}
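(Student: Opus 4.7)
The plan is to mimic the computation of \cite[Cor.~2.9]{CDS} for the case $m=0$, adapted to include the extra factors $\prod_{j=1}^m(t\dd_t-\beta_j)$ coming from the $\beta$-parameters. Write $u$ for the class of $1$ in $\cM_A^\gamma$ and $\bar u$ for its image in $\iota^+\cM_A^\gamma$. The map $\iota$ identifies $\dG_m$ with an open subset of the line $L=\{\lambda_2=\cdots=\lambda_N=1\}\subset\dA^N$, and one checks, exactly as in \emph{loc.\ cit.}, that the closed immersion $L\hookrightarrow\dA^N$ is non-characteristic for $\cM_A^\gamma$; hence $\iota^+\cM_A^\gamma$ is concentrated in degree zero and identifies, as an $\cO_{\dG_m}$-module, with the localization at $\lambda_1$ of $\cM_A^\gamma/(\lambda_2-1,\ldots,\lambda_N-1)\cM_A^\gamma$, where $\cD_{\dG_m}$ acts through those $\cD_{\dA^N}$-operators preserving the conormal ideal (so $\lambda_1$ and $\dd_{\lambda_1}$ act respectively as $t$ and $\dd_t$).

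Using the Euler operators $E_{m+j}=\lambda_1\dd_{\lambda_1}-\lambda_{j+1}\dd_{\lambda_{j+1}}-\alpha_{j+1}$ and $E_k=\lambda_1\dd_{\lambda_1}+\lambda_{n+k}\dd_{\lambda_{n+k}}-\beta_k$ in the defining ideal of $\cM_A^\gamma$, together with the fact that $\lambda_i$ acts as the identity on the pullback for $i\geq 2$, one obtains the relations
\[
\overline{\dd_{\lambda_{j+1}}u}=(t\dd_t-\alpha_{j+1})\bar u,\qquad \overline{\dd_{\lambda_{n+k}}u}=(\beta_k-t\dd_t)\bar u
\]
for $j=1,\ldots,n-1$ and $k=1,\ldots,m$. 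As the $\dd_{\lambda_i}$ mutually commute and each commutes with $\lambda_1\dd_{\lambda_1}$, these identities iterate to give
\[
\overline{\prod_{i=2}^{n}\dd_{\lambda_i}\,u}=\prod_{i=2}^{n}(t\dd_t-\alpha_i)\,\bar u,\qquad \overline{\prod_{k=1}^{m}\dd_{\lambda_{n+k}}\,u}=(-1)^m\prod_{k=1}^{m}(t\dd_t-\beta_k)\,\bar u,
\]
and more generally any $\overline{\dd_\lambda^\alpha u}$ becomes a polynomial in $t\dd_t$ applied to $\bar u$; hence $\iota^+\cM_A^\gamma$ is cyclic with generator $\bar u$.

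The lattice $\mbl_A$ is one-dimensional, generated by $(1,\ldots,1,-1,\ldots,-1)$ with $n$ ones followed by $m$ minus-ones, so there is a single primitive box relation $\prod_{i=1}^n\dd_{\lambda_i}u=\prod_{k=1}^m\dd_{\lambda_{n+k}}u$. Multiplying it by $\lambda_1$, restricting to the pullback, and using $\alpha_1=0$ to incorporate the leading factor $\lambda_1\dd_{\lambda_1}=t\dd_t$ into the product, we arrive at
\[
\prod_{i=1}^n(t\dd_t-\alpha_i)\,\bar u=(-1)^m t\prod_{k=1}^m(t\dd_t-\beta_k)\,\bar u.
\]
The operator $P':=\prod_{i=1}^n(t\dd_t-\alpha_i)-(-1)^m t\prod_{k=1}^m(t\dd_t-\beta_k)$ coincides with the hypergeometric operator $P$ of Definition~\ref{def:ClassicHyp} up to the automorphism $t\mapsto(-1)^m t$ of $\dG_m$, so combining this with the cyclicity established above we obtain a non-zero surjection $\cH(\alpha_i;\beta_j)\twoheadrightarrow\iota^+\cM_A^\gamma$. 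It is an isomorphism because $\cH(\alpha_i;\beta_j)$ is simple under the standing non-resonance hypothesis on the parameters, as recorded in Proposition~\ref{prop:factsClassicHyp}(2). The main obstacle will be verifying rigorously that the closed immersion $L\hookrightarrow\dA^N$ is non-characteristic for $\cM_A^\gamma$, so that the algebraic quotient computed above faithfully reproduces $\iota^+\cM_A^\gamma$ and not merely its $\cH^0$; this is controlled by the characteristic variety of the GKZ system with the matrix $A$ at hand, and is the technical content already handled in \cite{CDS} for the case $m=0$.
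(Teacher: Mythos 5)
The paper gives no proof for this proposition beyond the citation of \cite[Cor.~2.9]{CDS}, and your computation is exactly the one that citation performs (for $m=0$) generalized to $m\geq 1$: the Euler relations of $\cM_A^\gamma$ together with the single box relation for the rank-one lattice $\mbl_A$ produce, after setting $\lambda_2=\cdots=\lambda_N=1$ and clearing the $(-1)^m$ twist by the automorphism $t\mapsto(-1)^mt$, the hypergeometric operator, and irreducibility of $\cH(\alpha_i;\beta_j)$ upgrades the resulting surjection to an isomorphism. Your closing caveat is the right one: the non-characteristicity of the closed immersion for $\cM_A^\gamma$ (which ensures $\iota^+\cM_A^\gamma$ is concentrated in degree zero and nonzero on $\dG_m$) is the genuine technical content, and the paper likewise delegates it to \cite{CDS} rather than re-proving it here.
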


Since the restriction map $\iota$ is not smooth we do not know a priori whether taking inverse image by it
preserves irregular mixed Hodge modules. In order to show that $\cH(\alpha_i;\beta_j)$ can be upgraded to an element of $\IrrMHM(\dG_m)$ we use Proposition \ref{prop:1dimFL}, where the reduction procedure is build in by the use of the Fourier kernel $\psi=w_1\cdot t + w_2 + \ldots + w_N$.

Let $A\in \textup{M}((N-1)\times N,\dZ)$ as above and $\gamma=(\gamma_1,\ldots,\gamma_{N-1})^{\operatorname{t}}\in\fA_A$.
The $\cD_{\dA^N}$-module $\check{\mcm}^\gamma_A$ underlies a mixed Hodge module on $\dA^N$, so that the Rees module
$\cR^{F^H}\left(\check{\mcm}^\gamma_A\right)$ then gives rise to an algebraic, integrable mixed twistor $\cD$-module on $\dA^N$
that we denote by ${^\cT\!\!}\check\cM_A^\gamma$. Then we have the following concrete description of its Fourier-Laplace transform
$\FL_{\MTM}^\psi\left({^\cT\!\!}\check\cM_A^\gamma\right)
=q_*\left(p^*\left({^\cT\!\!}\check\cM_A^\gamma\right)\otimes\cT^{\psi/z}\right)$.
\begin{proposition}\label{prop:naiveFT}
Let $A$ and $\gamma$ be as before. Then the $\cRint_{\dA^1\times\dG_m}$-module $\For_2\left(\FL_{\MTM}^\psi\left({^\cT\!\!}\check\cM_A^\gamma\right)\right)$ can be expressed as $\cRint_{\dA^1\times\dG_m}/(P,H)$, where
$$
P=z^2\partial_z+(n-m)tz\partial_t+\varepsilon z \,\text{ and }\,H=zt\dd_t\prod_{i=1}^{n-1} z(t\partial_t-\gamma_{m+i})-t\prod_{j=1}^m z(t\partial_t-\gamma_j),
$$
with $\varepsilon =\sum_{j=1}^m\gamma_j-\sum_{i=m+1}^{N-1}\gamma_i+N-1$.
\end{proposition}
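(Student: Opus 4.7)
The plan is to reduce the statement to an explicit computation of $\FL^\psi(\check\cN_A^\gamma)$. Theorem \ref{thm:ClosedImmersion} with $d=N-1$ identifies $\For_2({^\cT\!\!}\check\cM_A^\gamma)=\cR^{F^H}\check\cM_A^\gamma = z\,\check\cN_A^\gamma$, so Proposition \ref{prop:1dimFL} yields
$$
\For_2\bigl(\FL^\psi_{\MTM}({^\cT\!\!}\check\cM_A^\gamma)\bigr) \;=\; z^{1-N}\,\FL^\psi(\check\cN_A^\gamma).
$$
It therefore suffices to present $\FL^\psi(\check\cN_A^\gamma)$ as a cyclic $\cR$-module and to track how the $z^{1-N}$-scaling of the cyclic generator conjugates the defining operators.

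Unfolding $\FL^\psi(\check\cN_A^\gamma)=\mch^0 q_+\bigl(p^+\check\cN_A^\gamma\otimes\laa\bigr)$, the relative de Rham quotient produces, from $\partial_{w_1}\psi=t$, $\partial_{w_j}\psi=1$ for $j\ge 2$ and $\partial_t\psi=w_1$, the substitution rules $[z\partial_{w_1}m]=-t[m]$, $[z\partial_{w_j}m]=-[m]$ for $j\ge 2$, and $z\partial_t\cdot[m]=[w_1 m]$ for every section $m$. Applying these to $\check E^z_k\cdot\mathbf{1}=0$ immediately gives $[w_{1+i}\mathbf{1}]=(tz\partial_t-z\gamma_{m+i})[\mathbf{1}]$ for $i=1,\ldots,n-1$ and $[w_{n+k}\mathbf{1}]=(-tz\partial_t+z\gamma_k)[\mathbf{1}]$ for $k=1,\ldots,m$. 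Iterating — by applying $\check E^z_{m+j}$ to $w_2\cdots w_j\mathbf{1}$ (whose commutator with $\check E^z_{m+j}$ vanishes because $a_{m+j,l}=0$ for $l\le j$) and $\check E^z_k$ to $w_{n+k+1}\cdots w_N\mathbf{1}$ — one obtains inductively
$$
[w_2\cdots w_n\mathbf{1}] = \prod_{i=1}^{n-1}(tz\partial_t-z\gamma_{m+i})\,[\mathbf{1}],\qquad
[w_{n+1}\cdots w_N\mathbf{1}] = \prod_{k=1}^{m}(-tz\partial_t+z\gamma_k)\,[\mathbf{1}].
$$
The single box relation $\check\Box_{\ell_0}\mathbf{1}=0$ for $\ell_0=(1,\ldots,1,-1,\ldots,-1)\in\ker A$ equates $z\partial_t\cdot[w_2\cdots w_n\mathbf{1}]$ with $[w_{n+1}\cdots w_N\mathbf{1}]$; multiplying by $t$ and reconciling the sign from $\prod_k(-tz\partial_t+z\gamma_k)=(-1)^m\prod_k(tz\partial_t-z\gamma_k)$ against the conventions of the Fourier kernel yields the stated operator $H$.

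The operator $P$ encodes the integrable structure: the Euler relation $E^z_0=z^2\partial_z+\sum_i\lambda_i z\partial_{\lambda_i}$ of $\cN_A^\gamma=\FL(\check\cN_A^\gamma)$ corresponds, under $z\partial_{\lambda_i}\leftrightarrow w_i$ and the restriction $\lambda_1\leftrightarrow t$, $\lambda_j\leftrightarrow 1$ implemented by $\FL^\psi$, to $z^2\partial_z+\psi$ acting on $[\mathbf{1}]$; inserting the formulas for the $[w_i\mathbf{1}]$ into $\psi=tw_1+\sum_{j\ge 2}w_j$ produces $(n-m)tz\partial_t+z\bigl(\sum_{k=1}^m\gamma_k-\sum_{i=1}^{n-1}\gamma_{m+i}\bigr)$, and the $z^{1-N}$-conjugation of $z^2\partial_z$ contributes an additional $(N-1)z$, assembling $\varepsilon z$ exactly as stated. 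Surjectivity of the induced map $\cRint_{\dA^1\times\dG_m}/(P,H)\twoheadrightarrow \FL^\psi_{\MTM}({^\cT\!\!}\check\cM_A^\gamma)$ is then immediate from the construction, and injectivity follows at $z=1$ by rank comparison with the classical hypergeometric $\cD$-module via Proposition \ref{prop:hypGKZ}. The main obstacle is the inductive translation of the monomial classes $[w_2\cdots w_j\mathbf{1}]$ and $[w_{n+k+1}\cdots w_N\mathbf{1}]$ into operators on $[\mathbf{1}]$ — each step requires commuting $\check E^z_*$ through a monomial in the $w_i$, using $[z\partial_{w_i},w_i]=z$ and tracking the cancellations among the resulting $z$-terms — together with the final sign reconciliation between $\prod_k(-tz\partial_t+z\gamma_k)$ and the stated shape of $H$.
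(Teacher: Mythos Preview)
Your approach is essentially the paper's: reduce via Proposition \ref{prop:1dimFL} to the explicit computation of $\FL^\psi(\check\cN_A^\gamma)$, read off the relations on $[\mathbf{1}]$ from the de Rham quotient, and track the $z$-shift. The substitution rules you extract and the inductive expression of $[w_2\cdots w_n\mathbf{1}]$, $[w_{n+1}\cdots w_N\mathbf{1}]$ are exactly what the paper does (more tersely). Two points, however, are not handled correctly.

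\emph{The sign in $H$.} Your computation produces the operator
\[
H'=zt\partial_t\prod_{i=1}^{n-1} z(t\partial_t-\gamma_{m+i})-(-1)^m\,t\prod_{j=1}^m z(t\partial_t-\gamma_j),
\]
with the extra $(-1)^m$. This is \emph{not} a matter of ``conventions of the Fourier kernel''; no choice of normalization of $\psi$ removes it. The paper passes from $H'$ to $H$ by the coordinate change $t\mapsto(-1)^m t$ on $\dG_m$, which leaves $t\partial_t$ (hence the first summand and $P$) unchanged and absorbs the sign in the second summand. You should state this isomorphism explicitly.

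\emph{Completeness of the presentation.} Your injectivity argument (``rank comparison at $z=1$'' via Proposition \ref{prop:hypGKZ}) is not sufficient. Knowing that the surjection $\cR^{\text{int}}_{\dA^1\times\dG_m}/(P,H)\twoheadrightarrow\For_2(\FL^\psi_{\MTM}({^\cT\!\!}\check\cM_A^\gamma))$ becomes an isomorphism at $z=1$ does not rule out a kernel supported over other values of $z$; Nakayama only gives vanishing in a neighbourhood of $\{z=1\}$. The paper does not invoke such an argument: it obtains the presentation directly, by observing that in the top de Rham quotient the relations in $\cJ^\psi$ allow one to eliminate all $w_i$ in favour of $tz\partial_t$ (your own formulas $[w_1\mathbf{1}]=z\partial_t[\mathbf{1}]$, $[w_{1+i}\mathbf{1}]=(tz\partial_t-z\gamma_{m+i})[\mathbf{1}]$, $[w_{n+k}\mathbf{1}]=(-tz\partial_t+z\gamma_k)[\mathbf{1}]$), after which the only surviving relations among $1,zt\partial_t,(zt\partial_t)^2,\ldots$ are those forced by the single box operator $\check\Box$ and the $z^2\partial_z$-relation. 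That elimination, carried out carefully, \emph{is} the proof that $(P',H')$ generate the full annihilator.

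A minor imprecision: your claim ``$a_{m+j,l}=0$ for $l\le j$'' is false for $l=1$ (the first column of $A$ is $\underline{1}$). The commutator you need still vanishes, but only because the monomial $w_2\cdots w_j$ does not contain $w_1$; the relevant range is $2\le l\le j$.
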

\begin{proof}
As said after Theorem \ref{thm:ClosedImmersion}, for any $\gamma$ inside the domain $\fA_A$ of admissible parameters, the Hodge filtration of $\check{\cM}^\gamma_A$ is the order filtration shifted by $N-(N-1) =1$. Therefore, for such values of $\gamma$ we can give an explicit expression of the Rees module of the filtered module $\left(\check{\mcm}^\gamma_A, F_\bullet^H\right)$. Namely, we have the isomorphism of $\cRint_{\dA^1\times\dA^N}$-modules
$$
\cR^{F^H}\left(\check{\mcm}^\gamma_A\right)\cong z\check{\cN}^\gamma_A:=\cRint_{\dA^1\times\dA^N}/\left(\check E^z_i,\check E^z_j,\check\Box, z^2\dd_z-z\right),
$$
where
\begin{align*}
\check E^z_i&=z\dd_{w_1}w_1-z\dd_{w_i}w_i+\gamma_{m+i-1}z,\text{ for }i=2,\ldots,n,\\
\check E^z_j&=z\dd_{w_1}w_1+z\dd_{w_{n+j}}w_{n+j}+\gamma_jz,\text{ for }j=1,\ldots,m,\\
\check\Box&=\D\prod_{i=1}^nw_i-\prod_{j=1}^mw_{n+j}.
\end{align*}

First we compute $\FL^\psi(z\check{\cN}^\gamma_A)$, which involves performing three operations with $z\check{\cN}^\gamma_A$: inverse image by $p:\dG_m\times\dA^N\ra\dA^N$, tensor product with the $\cRint_{\dA^1\times\dG_m\times\dA^N}$-module $\laa$ and direct image by $q:\dG_m\times\dA^N\ra\dG_m$. The first one is pretty easy. Namely
$$p^+ z\check{\mcn}^\gamma_A\cong\cRint_{\dA^1\times\dG_m\times\dA^N}/(\check E^z_i,\check E^z_j,\check\Box, z^2\dd_z-z,z\dd_t).$$
Let us tensor now $p^+ z\check{\mcn}^\gamma_A$ with $\laa$. This $\mcr^{\text{int}}$-module can be presented as $\cRint_{\dA^1\times\dG_m\times\dA^N}\cdot e^{\psi/z}=\cRint_{\dA^1\times\dG_m\times\dA^N}/\cI^\psi$, where $\cI^\psi$ is the left ideal generated by
$$z^2\dd_z+w_1t+w_2+\ldots+w_N,\quad z\dd_t-w_1,\quad z\dd_{w_1}-t,\quad z\dd_{w_i}-1,i=2,\ldots,N.$$
For $n \in p^+z\check{\mcn}^\gamma_A$, we will call $n^\psi$ the tensor $n\otimes e^{\psi/z}$. Then we can obtain the formulas
\begin{align*}
\left( z\p_{w_1} w_1 n \otimes e^{\psi/z}\right) &= z\p_{w_1}\left(w_1n \otimes e^{\psi/z}\right) - t \left(n \otimes w_1e^{\psi/z}\right) = (z\p_{w_1} w _1 - t z\p_t)\cdot n^\psi, \\
\left( z\p_{w_k} w_k n \otimes e^{\psi/z}\right) &= z\p_{w_k}\left(w_kn \otimes e^{\psi/z}\right) -  \left(n \otimes w_ke^{\psi/z}\right) = (z\p_{w_k} w _k - w_k)\cdot n^\psi \text{ for } k=2,\ldots ,N,\\
\left(z^2 \p_z n \otimes e^{\psi/z}\right) &= z^2 \p_z\cdot n^\psi - \left( n \otimes (-\psi)e^{\psi/z}\right) = \left( z^2 \p_z +w_1 t + w_2 + \ldots + w_N \right)\cdot n^\psi,\\
\left(z \p_t n \otimes e^{\psi/z} \right) &= z\p_t\cdot n^\psi - \left(n \otimes w_1 e^{\psi/z}\right) = (z \p_t -w_1)\cdot n^\psi.
\end{align*}

Hence $p^+z\check{\mcn}^\gamma_A \otimes \laa$ is the cyclic $\mcr^{\text{int}}_{\mba^1 \times \mbg_m \times \mba^n }$-module $\mcr^{\text{int}}_{\mba^1 \times \mbg_m \times \mba^n }/\cJ^\psi$, with $\cJ^\psi$ being the left ideal generated by
\begin{align*}
&\D\prod_{i=1}^nw_i-\prod_{j=1}^mw_{n+j}, \quad z^2 \p_z -z+w_1t+w_2 + \ldots +w_N ,\quad z\p_t -w_1,\\
&z\dd_{w_1}w_1- t z \p_t -z\dd_{w_i}w_i + w_i +\gamma_{m+i-1}z, \;\text{ for }i=2,\ldots,n,\\
&z\dd_{w_1}w_1 - t z\p_t +z\dd_{w_{n+j}}w_{n+j} - w_{n+j} +\gamma_jz, \;\text{ for }j=1,\ldots,m\, .
\end{align*}

We now consider the zeroth cohomology $\cH^0q_+\left(p^+z\check\cN_A^\gamma\otimes\laa\right)$, which is in turn the $N$-th cohomology of the de Rham complex $q_* \text{DR}_{\mba^1 \times \mbg_m \times \mba^N/\mba^1 \times \mbg_m}\left( p^+ z\check{\mcn}^\gamma_A \otimes \laa \right)$. This is given by the cyclic $\cR^{\text{int}}_{\mba^1 \times \mbg_m}$-module $\cR^{\text{int}}_{\mba^1 \times \mbg_m}/(P',H')$, where the operators $P$ and $H'$ are given by
\[
P':= z^2 \p_z + (n-m) t z\p_t + \varepsilon' z, \quad H':=zt\dd_t\prod_{i=1}^{n-1}(zt\dd_t-\gamma_{m+i}z)-(-1)^mt\prod_{j=1}^m(zt\dd_t-\gamma_jz)
\]
and $\varepsilon':= \sum_{j=1}^m\gamma_j-\sum_{i=m+1}^{N-1}\gamma_i-1$. Replacing $t$ by $(-1)^m t$ we obtain that
$\FL^\psi(z\check{\cN}^\gamma_A)\cong \cR_{\dA^1\times\dG_m}^{\text{int}}/(P',H)$, with
\[
H:=zt\dd_t\prod_{i=1}^{n-1}(zt\dd_t-\gamma_{m+i}z)-t\prod_{j=1}^m(zt\dd_t-\gamma_jz).
\]
Now it follows from Proposition \ref{prop:1dimFL} that
$$
\For_2\left(\FL_{\MTM}^\psi\left({^\cT\!\!}\check\cM_A^\gamma\right)\right)
\cong z^{-N} \FL^\psi(z\check{\cN}^\gamma_A) \cong \cR_{\dA^1\times\dG_m}^{\text{int}}/(P,H)
$$
with $$
P=z^2\partial_z+(n-m)tz\partial_t+\varepsilon z \,\text{ and }\,H=zt\dd_t\prod_{i=1}^{n-1} z(t\partial_t-\gamma_{m+i})-t\prod_{j=1}^m z(t\partial_t-\gamma_j),
$$
and $\varepsilon =\sum_{j=1}^m\gamma_j-\sum_{i=m+1}^{N-1}\gamma_i+N-1$.
\end{proof}

\begin{remark}\label{rem:naiveFT}
As a matter of fact, we do not have to restrict ourselves to the region $\fA_A$ to find our admissible parameters. If we have $\gamma\in\fA_A$ and add to it an integer vector $\underline{k}\in\dZ^{N-1}$ with no negative entries, then $\gamma+\underline{k}\notin sRes(A)$ by definition (cf. the proof of \cite[Lemma 3.5]{RS15}). Therefore, since $\cO_{\dG_m^d}^\gamma\cong\cO_{\dG_m^d}^{\gamma+\underline{k}}$ for any integer vector $\underline{k}$, we have $\check{\cM}_A^\gamma\cong\check{\cM}_A^{\gamma+\underline{k}}$ by Proposition \ref{prop:DirectImage} and the statement of the proposition holds true after changing $\fA_A$ by $\fA_A+\dN^{N-1}$.
\end{remark}

We will also make use of the following result, which calculates the admissible domain $\fA_A$ for the matrix $A$ in our particular context.

\begin{lemma}\label{lem:Raute}
Let $A\in \textup{M}((N-1)\times N,\dZ)$ be the matrix defined at the beginning of the section. Consider a point $p=(p_1,\ldots,p_m,q_1,\ldots,q_{n-1})\in[0,1)^{N-1}$. Let us define
$$p_-:=\min\bigl((\{p_1,\ldots,p_m\}\setminus\{0\})\cup\{1\}\bigr)\,\text{ and }\,p_+:=\max\{p_1,\ldots,p_m\},$$
that is, the minimum of the $p_i$ that do not vanish (taking $p_-=1$ if all of them are zero) and the maximum of them all.

Then, $p$ belongs to $(\fA_A+\dN^{N-1})\subset\dR^{N-1}$ if and only if, for all $i=1,\ldots,n-1$:
\begin{itemize}
\item $q_i\in[0,p_-)$ if some $p_i$ vanishes, or
\item $q_i\in[0,p_-)\cup[p_+,1)$, otherwise.
\end{itemize}
\end{lemma}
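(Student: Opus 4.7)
The plan is to translate the definition of $\fA_A$ into an explicit system of linear inequalities, and then decouple the problem across indices to produce the intervals in the statement.

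\textbf{Step 1: Explicit description of the cone $\dR_{\geq 0} A$.} Reading off the columns of the block matrix $A$, we have $\underline{a}_1 = \sum_{j=1}^m e_j + \sum_{i=1}^{n-1} e_{m+i}$, $\underline{a}_{1+i} = -e_{m+i}$ for $i=1,\ldots,n-1$, and $\underline{a}_{n+j} = e_j$ for $j=1,\ldots,m$. Writing a generic element of $\dR_{\geq 0} A$ as $s_0 \underline{a}_1 + \sum_{i} s_i \underline{a}_{1+i} + \sum_{j} t_j \underline{a}_{n+j}$ with non-negative coefficients, and reading off coordinates $(x_1,\ldots,x_m,y_1,\ldots,y_{n-1})$, one gets $x_j = s_0 + t_j$ and $y_i = s_0 - s_i$. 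Eliminating the parameters yields
$$\dR_{\geq 0} A = \{(x,y) \in \dR^{m+n-1} \,:\, x_j \geq 0 \text{ and } x_j \geq y_i \text{ for all } i,j\}.$$
Each of the $mn$ hyperplane sections $\{x_j = 0\}$ and $\{x_j = y_i\}$ cuts out a facet, with primitive inward normals $\underline{n}_{F_j^0} = e_j$ and $\underline{n}_{F_{ij}} = e_j - e_{m+i}$ respectively; one checks directly that each pairs non-negatively with every column of $A$, which is the required integrality condition.

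\textbf{Step 2: Reformulation of the admissible region.} Summing the columns gives $\underline{c} = 2\sum_{j=1}^m e_j$, hence $e_F = \langle \underline{n}_F, \underline{c} \rangle = 2$ for \emph{every} facet $F$ of $\dR_{\geq 0} A$. Unwinding the definition of $\fA_A$, the condition $\gamma \in \fA_A$ is equivalent to $\langle \underline{n}_F, \gamma\rangle \in (-1,0]$ for every facet $F$; in the notation $\gamma = (p_1,\ldots,p_m,q_1,\ldots,q_{n-1})$, this becomes
$$p_j \in (-1,0]\text{ for all } j=1,\ldots,m, \qquad p_j - q_i \in (-1,0] \text{ for all } i,j.$$

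\textbf{Step 3: Case analysis on integer shifts.} Given $(p,q) \in [0,1)^{N-1}$, we seek a shift $(k_1,\ldots,k_m,l_1,\ldots,l_{n-1}) \in \dN^{N-1}$ so that $(p-k,q-l) \in \fA_A$. The first line of inequalities forces $k_j = 0$ when $p_j = 0$ and $k_j = 1$ when $p_j \in (0,1)$, so the shifted components $p_j'$ are respectively $0$ or $p_j - 1 \in (-1,0)$. The second line then reads $q_i - l_i \in [p_+', p_-' + 1)$, where $p_\pm'$ denote the max/min of the $p_j'$. I would split into two subcases. If at least one $p_j$ vanishes, then $p_+' = 0$ and $p_-' = p_- - 1$ with $p_-$ as in the statement; since $q_i \in [0,1)$, only $l_i = 0$ can satisfy $q_i - l_i \geq 0$, yielding $q_i \in [0, p_-)$. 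If all $p_j > 0$, then $p_+' = p_+ - 1$ and $p_-' = p_- - 1$ with $p_\pm \in (0,1)$; the constraint $q_i - l_i \in [p_+ - 1, p_-)$ is feasible exactly for $l_i \in \{0, 1\}$, producing $q_i \in [0, p_-)$ and $q_i \in [p_+, 1)$ respectively. This matches the two bullets of the statement, with the degenerate case of all $p_j = 0$ absorbed into the first subcase via the convention $p_- = 1$.

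The main subtlety is to notice that, thanks to the uniform value $e_F = 2$, the admissibility inequalities decouple: the shift $k_j$ depends only on $p_j$ and each $l_i$ only on $q_i$, and so the condition really reduces to a coordinate-wise check. Beyond this conceptual point the argument is the bookkeeping sketched above.
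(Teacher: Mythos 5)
Your proof is correct and follows essentially the same route as the paper's: describe the cone $\dR_{\geq 0}A$ and its facet normals, use $e_F=2$ to rewrite admissibility as $\langle \underline{n}_F,\gamma\rangle\in(-1,0]$ for each facet, and then run a case analysis on whether some $p_j$ vanishes by observing that the $p_j$-constraints force the shift $k_j$ and the $q_i$-constraints become an interval $[p_+',p_-'+1)$. The only (cosmetic) differences are that you obtain the facet inequalities by eliminating the cone parameters rather than via the paper's argument that every $(N-1)\times(N-1)$-minor of $A$ is nonzero, and that you handle both implications at once by noting the shift $\underline{k}$ is forced, whereas the paper treats the two directions separately.
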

\begin{proof}
We will first find the expression for the admissible region $\fA_A$. For this purpose, we must find a set of hyperplanes containing the facets of the cone $C:=\dR_{\geq0}A\subset \mbr^{N-1}$.
Denote by $\{\ul u_1,\ldots,\ul u_{N-1}\}$ the canonical basis of $\dR^{N-1}$ and write $x_1,\ldots,x_{N-1}$ for the corresponding coordinates.

Since any face of a cone is generated by a subset of its generators, and for our given matrix $A$, any $(N-1)\times (N-1)$-minor is non-zero (so that any subset of $N-1$ columns generates a full-dimensional cone), we see that any facet can contain at most $N-2$ columns. On the other hand, such facet must be $N-2$-dimensional, so it cannot be generated by fewer columns. Therefore, we can conclude that it contains exactly $N-2$ columns.

Any linear functional $h$ defining a facet of $C$ must satisfy that $h(C) \geq 0$. Denote by $H_{k,l}$ the hyperplane not containing $\underline{a}_k$ and $\underline{a}_\ell$. There are five classes of these hyperplanes: $H_{1,i}, H_{1,n+j},H_{i_1,i_2},H_{i,n+j},H_{n+j_1,n+j_2}$ with $i,i_1,i_2 \in \{2,\ldots ,n\}$ and $j,j_1,j_2 \in \{ 1,\ldots, m\}$. The linear functionals defining them are, respectively,\begin{align*}
  h_{1,i}&:= x_{m+i-1},\\
  h_{1,n+j}&:= x_j, \\
  h_{i_1,i_2}&:= x_{m+i_1-1}-x_{m+i_2-1}, \\
  h_{i,n+j}&:= x_j-x_{m+i-1},\\
  h_{n+j_1,n+j_2}&:= x_{j_1}-x_{j_2}.
\end{align*}
All of the linear forms $h_{1,i}$, $h_{i_1,i_2}$ and $h_{n+j_1,n+j_2}$ (for the corresponding values of $i,i_1,i_2,j_1,j_2$) take both negative and positive values on some columns of $A$, so the associated hyperplanes do not contain any facet.

We conclude that each facet of $C$ is contained in one of the following hyperplanes:
\begin{equation}\label{eq:Hyperplanes}
\begin{array}{rll}
H_{1,n+j}:&x_j=0 & \text{ for }j=1,\ldots,m,\\
H_{i,n+j}:&x_j-x_{m+i-1}=0 & \text{ for }i=2,\ldots,n, j=1,\ldots,m.
\end{array}
\end{equation}
These hyperplanes are different one from each other and the respective functionals satisfy $h_{1,n+j}(C)\geq 0$ and $h_{i,n+j}(C)\geq 0$. Hence each of them contains a different facet of the cone $C$.

The primitive, inward-pointing normal vectors of the hyperplanes
$H_{1,n+j}$ resp. $H_{i,n+j}$ are $\ul n_{1,n+j}:=\ul u_j$ resp. $\ul n_{i,n+j}:=\ul u_j-\ul u_{m+i-1}$. Denote by $\underline{c}$ the sum of all columns of $A$. We have $\underline{c}=2(\ul u_1+\ldots+\ul u_m)$ and $e_{k,l} :=\langle\ul n_{k,l},\ul c\rangle=2$, where $k$ and $l$ take the admissible values corresponding to the hyperplanes we consider in \eqref{eq:Hyperplanes} (i.e., we have either $(k,l)=(1,n+j)$ or $(k,l)=(i,n+j)$ for $i=2,\ldots,n$ and $j=1,\ldots,m$).
Define
\begin{align*}
\mathfrak{A}_{k,l}&:=H_{k,l} -[0,1/e_{k,l})\cdot\underline{c} = H_{k,l} -[0,1)\cdot\left(\ul u_1+\ldots+\ul u_m\right) \\
&=\begin{cases} H_{1,n+j}-[0,1)\cdot u_j & \text{for } j=1,\ldots,m, \\
H_{i,n+j} -[0,1)\cdot u_j & \text{for } i=2,\ldots,n, j=1,\ldots,m,\end{cases}
\end{align*}
since for $(k,l)=(1,n+j)$ resp. $(k,l)=(i,n+j)$, the vectors $\ul u_1, \ldots, \ul u_{j-1}, \ul u_{j+1},\ldots, \ul u_m$ are contained in $H_{1,n+j}$ resp. $H_{i,n+j}$. Then we have
$$
\mathfrak{A}_{1,n+j}=H_{1,n+j}-[0,1)\cdot u_j = \left\{(x_1,\ldots, x_{N-1})\in\dR^{N-1}\,|\,-1<x_j\leq 0 \right\}
$$
for all $j=1,\ldots, m$ and
$$
\mathfrak{A}_{i,n+j}=H_{i,n+j} -[0,1)\cdot u_j = \left\{(x_1,\ldots, x_{N-1})\in\dR^{N-1}\,|\,-1<x_j-x_{m+i-1}\leq 0\right\}
$$
for all $i=2,\ldots,n, j=1,\ldots,m$.
According to the construction given before Theorem \ref{thm:ClosedImmersion}, we can conclude that
$$
\fA_A =\bigcap_{F \text{ facet}} \{ \mbr \cdot F - [0, 1/e_F) \cdot \underline{c}\}=
\bigcap_{\scriptscriptstyle k,l \textup{ from eq.} \eqref{eq:Hyperplanes}}\fA_{k,l},
$$
so we can describe the admissible region $\fA_A$ as
$$
\fA_A:\begin{cases}
-1<x_j\leq 0 & \text{for }j=1,\ldots,m,\\
-1<x_j-x_{m+i-1}\leq0 & \text{for }i=2,\ldots,n, j=1,\ldots,m
\end{cases}\subset\dR^{N-1}.
$$
Now let us pick a point $p\in[0,1)^{N-1}\cap(\fA_A+\dN^{N-1})$, and take $\underline{k}=(k_1,\ldots,k_{N-1})\in \dN^{N-1}$ such that
$p\in[0,1)^{N-1}\cap(\fA_A+\underline{k})$. The shifted domain is given by
$$
\fA_A+\underline{k}:\begin{cases}
-1+k_j<x_j\leq k_j & \text{for }j=1,\ldots,m,\\
-1+k_j-k_{m+i-1}<x_j-x_{m+i-1}\leq k_j-k_{m+i-1} & \text{for }i=2,\ldots,n, j=1,\ldots,m
\end{cases}\subset\dR^{N-1}.
$$
Assume first there is a vanishing coordinate $p_{j_0}$. Then we must have $k_{j_0}=0$. For such an index and any $i=1,\ldots,n-1$, we can consider the $n-1$ inequalities
$$-1-k_{m+i}<-q_i\leq-k_{m+i},$$
from where we deduce that every $q_i$ belongs to $[k_{m+i},k_{m+i}+1)\cap[0,1)$, for $i=1,\ldots,n-1$. In order for those intersections to be nonempty, we must have $k_{m+i}+1>0$ and $k_{m+i}<1$, so necessarily $k_{m+i}=0$ for all $i$ (and hence $q_i$ must lie within $[0,1)$, which is no new information).

Now, for any nonvanishing $p_j$, it is clear that $k_j=1$. Then, if we look at the remaining inequalities, we see that
$$0<p_j-q_i\leq1,$$
for every $i=1,\ldots,n-1$, and any $j\in\{1,\ldots,m\}$ such that $p_j\neq0$. Therefore, every $q_i$ belongs to $[0,1)\cap\bigcap_{p_j\neq0}[p_j-1,p_j)=[0,p_-)$. Obviously, if $p_j=0$ for all $j=1,\ldots,m$, we obtain that the $q_i$ belong all to $[0,1)=[0,p_-)$.

Assume now that no $p_j$ vanishes. Then $k_1=\ldots=k_m=1$. It follows that we can express the shifted region $\fA_A+\underline{k}$ as
$$
\fA_A+\underline{k}:\begin{cases}
0<x_j\leq 1 & \text{for }j=1,\ldots,m,\\
-k_{m+i-1}<x_j-x_{m+i-1}\leq1-k_{m+i-1} & \text{for }i=2,\ldots,n, j=1,\ldots,m
\end{cases}\subset\dR^{N-1}.
$$
Then, for any $j=1,\ldots,m$, we have $q_i\in[0,1)\cap[p_j+k_{m+i}-1,p_j+k_{m+i})$, for $i=1,\ldots,n-1$. As before, this implies that $p_j+k_{m+i}>0$ and $p_j+k_{m+i}-1<1$, for each $j=1,\ldots,m$. Since each $p_j$ lives in $(0,1)$, the $k_{m+i-1}$ can only be either 0 or 1.

Pick an $i\in\{1,\ldots,n-1\}$ such that $k_{m+i}=0$. Then, as before,
$$q_i\in\bigcap_{j=1}^m[p_j-1,p_j)\cap[0,1)=[0,p_-).$$
If our index $i$ is such that $k_{m+i}=1$, then
$$q_i\in\bigcap_{j=1}^m[p_j,p_j+1)\cap[0,1)=[p_+,1),$$
and one direction of the statement is done.

To show the other implication of the lemma, suppose now that every $q_i$ lies within $[0,p_-)\cup[p_+,1)$ for $i=1,\ldots,n-1$, and no $p_j$ vanishes. We can rewrite this as a disjunction: either $q_i\in\bigcap_{j=1}^m[0,p_j)=[0,1)\cap\bigcap_{j=1}^m[p_j-1,p_j)$ or $q_i\in\bigcap_{j=1}^m[p_j,1)=[0,1)\cap\bigcap_{j=1}^m[p_j,p_j+1)$. If $q_i\in[0,p_-)$, define $k_{m+i}:=0$. Otherwise, we take $k_{m+i}:=1$. Summing up, it is clear that
$$p\in\left(\fA_A+(1,\stackrel{(m)}{\ldots},1,k_{m+1},\ldots,k_{N-1})\right)\cap[0,1)^{N-1}.$$

If some $p_j$ vanishes, and every $q_i$ belongs to $[0,p_-)$, we can do the same as above to see that
$$p\in\left(\fA_A+(k_1,\ldots,k_m,0,\ldots,0)\right)\cap[0,1)^{N-1},$$
where $k_j$ vanishes if so does $p_j$ and is equal to $1$ if $p_j\neq 0$.
\end{proof}

As a consequence of the above calculation of the set of admissible parameters, let us prove a result extending \cite[Theorem 3.24]{CDS}.

\begin{theorem}\label{thm:1dimhypgeo}
Let $\alpha_1,\ldots,\alpha_n$ and $\beta_1,\ldots,\beta_m$ be real numbers, lying on the interval [0,1) and increasingly ordered. Assume moreover that:
\begin{itemize}
  \item No difference $\alpha_i-\beta_j$ is zero, for any $i=1,\ldots,n$ and $j=1,\ldots,m$.
  \item After applying the bijection $[0,1)\ra S^1$ given by $x\mapsto e^{2\pi ix}$, all the images of the $\alpha_i$ are at one arc of the unit circle, while those of the $\beta_j$ find themselves at the complementary arc. (In other words and going back to the interval $[0,1)$, either no $\alpha_i$ belongs to any interval $(\beta_j,\beta_{j+1})$ or viceversa.)
\end{itemize}
Consider the operators $P$ and $H$ given by
$$P=z^2\partial_z+(n-m)tz\partial_t+\varepsilon z \,\text{ and }\,H=\prod_{i=1}^n z(t\partial_t-\alpha_i)-t\prod_{j=1}^m z(t\partial_t-\beta_j),$$
with $\varepsilon =\sum_{j=1}^{m}\beta_j-\sum_{i=1}^n\alpha_i+N-1$. Let $\wh\cH(\alpha_i;\beta_j)$ be the $\cRint_{\dA_z^1\times \dG_m}$-module
$$\widehat{\cH}(\alpha_i;\beta_j):=\cO_{\dA_z^1\times\dG_m}\langle z^2\partial_z,zt\partial_t\rangle/(P,H).$$

Then, $\widehat{\cH}(\alpha_i;\beta_j)$ underlies a unique object of $\IrrMHM(\dG_m)$ with associated $\cD_{\dG_m}$-module $\cH(\alpha_i;\beta_j)$. It can be uniquely extended to an irreducible $\cRint_{\dA^1_z\times \dP^1}$-module underlying an object of $\IrrMHM\left(\dP^1\right)$.
\end{theorem}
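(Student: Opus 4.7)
My plan is to identify $\widehat{\cH}(\alpha_i;\beta_j)$ with $\For_2$ of a suitable object obtained from Proposition~\ref{prop:naiveFT}, and then invoke Sabbah's stability of $\IrrMHM$ under the operations defining $\FL^{\psi}_{\MTM}$. The formula of Proposition~\ref{prop:naiveFT} produces an operator $H$ containing the factor $zt\partial_t$, i.e.\ it hard-codes $\alpha_1=0$, so the task splits into (i)~a reduction to that normalization using the symmetries of hypergeometric $\cD$-modules, and (ii)~matching the theorem's interlacing hypothesis with the admissibility region described by Lemma~\ref{lem:Raute}.

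\textbf{Reduction to the favorable configuration.} Two symmetries are available. First, the inversion $j:\dG_m\to\dG_m$, $t\mapsto t^{-1}$, preserves $\IrrMHM$ and interchanges the type-$(n,m)$ hypergeometric with parameters $(\alpha_i;\beta_j)$ with one of type $(m,n)$ with parameters $(-\beta_j;-\alpha_i)$ modulo $\dZ$; this lets me assume without loss of generality that the interlacing condition places the $\alpha_i$ in the arc $[0,\beta_-)\cup[\beta_+,1)$. Second, the Kummer twist of Proposition~\ref{prop:factsClassicHyp}(1) combined with the integer-shift invariance of Proposition~\ref{prop:factsClassicHyp}(3) further reduces to $\alpha_1=0$ with all parameters in $[0,1)$ and the interlacing preserved. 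To lift this reduction to the $\cRint$-level, I would work with an $\cRint$-Kummer module of the shape $\widehat\cK_\eta:=\cRint_{\dA^1\times\dG_m}/(z(t\partial_t-\eta),\,z^2\partial_z-cz)$ with $c$ chosen so that the tensor product with $\widehat\cH(\alpha_i-\eta;\beta_j-\eta)$ absorbs both the shifts of the factors $z(t\partial_t-\alpha_i)$ appearing in $H$ and the $\varepsilon$-shift $\tilde\varepsilon-\varepsilon=-(n-m)\eta$ in $P$; for real $\eta$ this Kummer lies in $\IrrMHM(\dG_m)$, so it suffices to prove the theorem after the reduction.

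\textbf{Main step.} With $\alpha_1=0$, set $\gamma:=(\beta_1,\dots,\beta_m,\alpha_2,\dots,\alpha_n)\in[0,1)^{N-1}$. The irreducibility hypothesis $\alpha_i\ne\beta_j$ forbids any $\beta_j=0$, so the ``no $p_j$ vanishes'' branch of Lemma~\ref{lem:Raute} reduces $\gamma\in\fA_A+\dN^{N-1}$ exactly to the condition $\alpha_i\in[0,\beta_-)\cup[\beta_+,1)$ for $i\ge2$, which is the reduced interlacing. Remark~\ref{rem:naiveFT} then legitimates Proposition~\ref{prop:naiveFT}, producing an $\cRint$-isomorphism
\[
\widehat\cH(\alpha_i;\beta_j)\;\cong\;\For_2\!\left(\FL^{\psi}_{\MTM}\!\left({}^{\cT}\!\!\check\cM_A^\gamma\right)\right).
\]
Since ${}^{\cT}\!\!\check\cM_A^\gamma$ is built as the Rees module of the mixed Hodge module $(\check\cM_A^\gamma,F^H_\bullet)$ provided by Theorem~\ref{thm:ClosedImmersion}, it lies in $\IrrMHM(\dA^N)$. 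Sabbah's stability result \cite[Cor.~0.5]{Sa15} (applied as in the Corollary after the theorem establishing ${}^{\cT}\!\!\cM_A^\beta$) then places $\FL^\psi_{\MTM}({}^{\cT}\!\!\check\cM_A^\gamma)$ in $\IrrMHM(\dG_m)$, and undoing the Kummer twist and the possible inversion produces the desired $\IrrMHM$-lift of $\widehat\cH(\alpha_i;\beta_j)$.

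\textbf{Uniqueness and extension to $\dP^1$.} By Proposition~\ref{prop:factsClassicHyp}(2), $\cH(\alpha_i;\beta_j)$ is irreducible; by the work of Katz it is rigid on $\dP^1$; and because $\alpha_i,\beta_j\in\dR$, its formal local monodromies at $0$ and $\infty$ are unitary. Sabbah's classification theorem \cite{Sa15} therefore yields a unique upgrade of the $\cD$-module to an irreducible object of $\IrrMHM(\dP^1)$, whose restriction to $\dG_m$ recovers the $\IrrMHM$-lift constructed above (giving uniqueness there) and simultaneously provides the claimed extension. The main technical obstacle I anticipate is the $\cRint$-level Kummer step: designing the twist $\widehat\cK_\eta$ so that both the $\varepsilon$-term of $P$ and each factor in $H$ are transformed correctly on the tensor product — a direct but delicate bookkeeping exercise compatibilizing the integrable structure with the parameter shift.
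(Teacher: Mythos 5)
Your proposal follows essentially the paper's route for the existence statement: reduce to $\alpha_1=0$ by a Kummer twist at the $\cR^{\operatorname{int}}$-level, use Lemma~\ref{lem:Raute} and Remark~\ref{rem:naiveFT} to place the parameter vector in $\fA_A+\dN^{N-1}$, identify $\widehat{\cH}(\alpha_i;\beta_j)$ with $\For_2\bigl(\FL^\psi_{\MTM}({^\cT\!\!}\check\cM_A^\gamma)\bigr)$ via Proposition~\ref{prop:naiveFT}, and invoke \cite[Cor.~0.5]{Sa15}. Two side remarks: the initial inversion $t\mapsto t^{-1}$ is superfluous, because once the Kummer twist has arranged $\alpha_1=0$ the image $1\in S^1$ lies on the $\alpha$-arc, so the $\alpha_i$ automatically fall into $[0,\beta_-)\cup[\beta_+,1)$ regardless of which branch of the second hypothesis held originally; and your undetermined constant $c$ in $\widehat\cK_\eta$ is simply $c=0$ (the paper's Kummer module is $\cRint_{\dA^1\times\dG_m}/(z^2\partial_z,\,tz\partial_t-z\eta)$, and the shift $\varepsilon\mapsto\varepsilon-(n-m)\eta$ already arises from the $(n-m)tz\partial_t$ term in $P$ via the Leibniz rule, with no contribution needed from $z^2\partial_z$).

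There is, however, a genuine gap in the uniqueness argument on $\dG_m$. Deducing it from uniqueness of the $\IrrMHM(\dP^1)$-lift of $\cH_{pr}=j_{\dag+}\cH$ would require that every $\IrrMHM(\dG_m)$-lift of $\cH$ extends to $\dP^1$ with underlying $\cD$-module $\cH_{pr}$; this you do not establish, and it is not automatic (objects of $\MTM_{\text{alg}}(\dG_m)$ live on $\dP^1$ with underlying $\cD$-module $j_*\cH$, not $j_{\dag+}\cH$). The paper instead proves uniqueness on $\dG_m$ directly: the functor $\Xi_{\text{DR}}$ is faithful by \cite[Rem.~7.2.9]{Mo13}, so $\Hom_{\MTM(\dG_m)}(\wh\cH,\wh\cH')\hookrightarrow\Hom_{\cD_{\dG_m}}(\cH,\cH)$, and irreducibility of $\cH$ then forces the twistor lift to be unique; only afterwards is the extension to $\dP^1$ treated, via \cite[Thm.~1.4.4]{Mo5}, \cite[Rem.~1.40]{Sa15}, rigidity, and \cite[Thm.~0.7]{Sa15}. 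Adding the faithfulness step closes your gap.
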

\begin{proof}
Let us assume first that $\alpha_1=0$. Then, by the first assumption on the $\alpha_i$ and the $\beta_j$, we have $\beta_j\neq0$ for every $j$. By the second assumption we can deduce that no $\alpha_i$ is between any two $\beta_j$, but all of the $\beta_j$ must be between two certain $\alpha_i$. Thanks to Lemma \ref{lem:Raute}, this means that $(\beta_1,\ldots,\beta_m,\alpha_2,\ldots,\alpha_n)$ belongs to $\fA_A+\dN^{N-1}$, where $A$ is the matrix of the beginning of the section.
As a consequence, by Proposition \ref{prop:naiveFT} and Remark \ref{rem:naiveFT} we have
that
$$
\For_2\left(\FL^\psi_{\MTM}\left({^\cT\!\!}\check\cM_A^\gamma\right)\right)
\cong \wh\cH(\alpha_i;\beta_j)
$$
(recall that ${^\cT\!\!}\check\cM_A^\gamma$ is the algebraic integrable mixed twistor $\cD$-module
with underlying $\cRint_{\dA^N}$-module $\cR^{F^H}\check{\mcm}^\gamma_A$, i.e.
such that $\For_2\left(\check\cM_A^\gamma\right)=\cR^{F^H}\check{\mcm}^\gamma_A$). We have moreover that ${^\cT\!\!}\check\cM_A^\gamma\in \IrrMHM(\dA^N)$ and thanks to \cite[Cor. 0.5]{Sa15}, we know that the functors entering in the definition of $\FL_{\MTM}^\psi$ preserve the category
of irregular mixed Hodge modules, so we conclude that $\wh\cH(\alpha_i;\beta_j)$ underlies an element of
$\IrrMHM(\dG_m)$.

Assume now that $\alpha_1>0$. For any real number $\eta$, denote by $\wh\cK_\eta$ the Kummer $\cR_{\dA^1\times\dG_m}$-module $\cRint_{\dA^1\times \dG_m}/(z^2\dd_z,tz\dd_t-z\eta)$.

The tensor product of $\cRint_{\dA^1\times\dG_m}$-modules $\wh\cH(\alpha_i;\beta_j)\otimes_{\cO_{\dA^1\times\dG_m}}\wh\cK_{-\alpha_1}$ gives rise to the corresponding tensor product of twistor $\cD$-modules on $\dG_m$. This product can be presented as $\wh\cH(\alpha'_i;\beta'_j)$, where $\alpha'_i=\alpha_i-\alpha_1$ for every $i$ and $\beta'_j=\beta_j-\alpha_1$ for every $j$. The assumptions on the parameters imply that $\alpha'_1=0$ and the vector $(\beta'_1,\ldots,\beta'_m,\alpha'_2,\ldots,\alpha'_n)$ lives in $\fA_A+\dN^{N-1}$. Then, arguing as before, such tensor product is an irregular mixed Hodge module of exponential-Hodge origin. Since $\wh\cK_{\alpha_1}$ is the faithful image of a mixed Hodge module on $\dG_m$, the tensor product with it preserves the condition of being in $\IrrMHM(\dG_m)$ due to \cite[Cor. 0.5]{Sa15}, and so is the case of our original $\cRint_{\dA_z^1\times \dG_m}$-module
$$\wh\cH(\alpha_i;\beta_j)\cong\wh\cH(\alpha'_i;\beta'_j) \otimes_{\cO_{\dA^1\times\dG_m}}\wh\cK_{\alpha_1}.$$

This ends the statement on the existence. Let us prove now the claims on the unicity, as in \cite[Thm. 2.13]{CDS}, noting that the condition on the differences $\alpha_i-\beta_j$ is equivalent to $\cH$ being irreducible, and thus rigid (cf. [ibid., Prop. 2.5], noting that all the parameters belong to $[0,1)$).

Consider now any twistor $\cD$-module $\wh\cH'$ on $\dG_{m,t}$ whose underlying $\cD_{\dG_{m,t}}$-module is $\cH$. Since the functor $\Xi_{\text{DR}}$ is faithful by \cite[Rem. 7.2.9]{Mo13}, we have an injection of Hom groups
$$\Hom_{\MTM(\dG_{m,t})}(\wh\cH,\wh\cH')\hra\Hom_{\cD_{\dG_{m,t}}}(\cH,\cH).$$
But $\cH$ is irreducible, so its only endomorphism is the identity and then the twistor $\cD$-module underlying $\cH$ is unique.

On the other hand, let $j:\dG_{m,t}\hra\dP^1$ be the canonical inclusion and consider the $\cD_{\dP^1}$-module $\cH_{pr}:=j_{\dag+}\cH$. It is an irreducible holonomic $\cD_{\dP^1}$-module, because so is $\cH$ by the assumption on the $\alpha_i$ and the $\beta_j$. Then it gives rise to a unique pure integrable twistor $\cD$-module $\wh\cH_{pr}$ on $\dP^1$ by \cite[Thm. 1.4.4]{Mo5} and \cite[Rem. 1.40]{Sa15}. In addition, its underlying $\cD_{\dP^1}$-module $\cH_{pr}$ is rigid, as $\cH$ was. As a consequence, we can invoke [ibid., Thm. 0.7] and claim that such twistor $\cD$-module on $\dP^1$ is in fact an object of $\IrrMHM(\dP^1)$. Take now $\wh\cH':=j^+\wh\cH_{pr}$, which is an irregular mixed Hodge module whose underlying $\cD_{\dG_{m,t}}$-module is $\cH$, by \cite[Prop. 14.1.24]{Mo13}. Then we must have, as was just shown, $\wh{\cH}'\cong \wh{\cH}$, so that the extension $\wh{\cH}_{pr}$ of $\wh{\cH}$ is unique, and we are done.
\end{proof}

\begin{remark}\label{rem:HodgeFiltOrderFilt}
Let us consider the last theorem for the case $m=n$, that is, the case of regular hypergeometric systems.
Consider $\wh\cH$ as a $\cR_{\dA^1_z\times \dG_m}$-module only, as such it is isomorphic to
$\cR_{\dA^1_z\times \dG_m} / (H)$, where now $H=\prod_{i=1}^m z(t\partial_t-\alpha_i)-t\prod_{j=1}^m z(t\partial_t-\beta_j)$.
$\cR_{\dA^1_z\times \dG_m}$ is graded by degree in $z$ (where $z$ has degree $1$), and since $H$ is homogenous (which is
not the case if $n\neq m$), we see that $\wh\cH$ is a \emph{graded} $\cR_{\dA^1_z\times \dG_m}$-module. 
It is obviously \emph{strict}, i.e. it has no $z$-torsion, and then by \cite[A.2.5(5)]{MHM},
we see that $\wh \cH$
is the Rees module of a filtered
$\cD_{\dG_m}$-module, namely, the (regular) hypergeometric module $\cH(\alpha_i;\beta_j)$ together with the filtration by order of differential operators. Notice also that if $n=m$, we have $P=z^2\partial_z+\varepsilon z$, which implies that
$\wh\cH$ has an action by $z\partial_z$ and that if we write $\wh\cH=\oplus_k \wh\cH_k$ (grading with respect to $z$),
then for any $m\in \wh\cH_k$, we have $(z\partial_z)(m)=(k-\epsilon) m$.

Now suppose that we have $n=m$ and that additionally the hypotheses of the last theorem are satisfied, then since $\widehat{\cH}(\alpha_i;\beta_j)$ is the unique object in $\IrrMHM(\dG_m)$ (lying actually in the essential image of $\MHM(\dG_m)$) with underlying $\cD_{\dG_m}$-module $\cH(\alpha_i;\beta_j)$, it is the Rees module of the filtered module $(\cH(\alpha_i;\beta_j),F^H_\bullet)$, where
$F^H_\bullet$ denotes the Hodge filtration of the complex variation of Hodge structures on $\cH(\alpha_i;\beta_j)$.
Hence $F^H_\bullet \cH(\alpha_i;\beta_j) = F^{ord}_\bullet\cH(\alpha_i;\beta_j)$ in this case.
Moreover, if we put
$$
R_k:=\prod_{i=1}^{k} (t\partial_t-\alpha_i)
$$
for $k=0,\ldots,n-1$ (where $R_0:=1$), then $(R_k)_{k=0,\ldots,n-1}$ is an $\cO_{\dG_m}$-basis of $\cH(\alpha_i,\beta_j)$ and
yields a splitting of the Hodge filtration $F_\bullet^H$. In particular, we obtain that the Hodge numbers $h^p(\cH(\alpha_i;\beta_j))
=\dim\left(F_k^H /F_{k-1}^H\right)$
are all equal to one. This is consistent with \cite[Thm. 1]{Fe} (up to an overall shift, as noticed in that theorem)
in the version of \cite[Proposition 2.6]{CDS},
since under the assumption of Theorem \ref{thm:1dimhypgeo}, the function $\#\{j:\beta_j<\alpha_k\}$ is constant.
\end{remark}

We will finish this section with a calculation of an irregular Hodge filtration, similar to the last section of \cite{CDS}. In that reference, the authors computed such a filtration in the case where the hypergeometric $\cD$-module had a purely irregular singularity at infinity, that is, it was of type $(n,0)$. It is immediate to see that for modules of type $(n,1)$, the second assumption of Theorem \ref{thm:1dimhypgeo} holds true, so that we obtain an explicit description of the $\cRint_{\dA^1_z\times \dG_m}$-module underlying the irregular Hodge module with associated $\cD_{\dG_m}$-module $\cH(\alpha_1,\ldots,\alpha_n;\beta)$. In the sequel, we are going to compute the irregular Hodge filtration of such modules of type $(n,1)$.

Let us recall the conventions and notations used in \cite[\S~4]{CDS} (cf. \cite[Not. 2.1]{Sa15}). We will deal with the classical hypergeometric $\cD$-module $\cH=\cH(\alpha_i;\beta)$, where the $\alpha_i$ and $\beta$ are $n+1$ real numbers belonging to the interval $[0,1)$. We will denote by $\wh\cH$ both its associated algebraic, integrable twistor $\cD$-module on $\dG_m$ and its underlying $\cR_{\dA_z^1\times\dG_m}^{\text{int}}$-module (as in the statement of Theorem \ref{thm:1dimhypgeo}). From now on, we will write $\cX$, $\ttheta\cX$ and $\ttau\cX$ meaning the products $\dA_z^1\times \dG_{m,t}$, $\cX\times\dG_{m,\theta}$, and $\cX\times\dA_\tau^1$, respectively, where $\theta=1/\tau$. Finally, we will write $\ttau\cX_0=\cX\times\{\tau=0\}\subset\ttau\cX$.

\begin{thm}
Let real numbers $\alpha_1,\ldots,\alpha_n, \beta \in [0,1)$ be given. Suppose that $\alpha_1\leq \ldots \leq \alpha_n$ and that moreover $\alpha_i-\beta \notin \dZ$ for all $i=1,\ldots,n$. For each $k=1,\ldots,n$, set $\rho(k)=-(n-1)\alpha_k+k$. Then the jumping numbers of the irregular Hodge filtration of $\cH=\cH(\alpha_i;\beta)$ are, up to an overall real shift, the numbers $\rho(k)$. The irregular Hodge numbers are the multiplicities of those jumping numbers, or equivalently, the nonzero values of $|\rho^{-1}(x)|$, for $x$ real.

Moreover, let $\nu_\alpha(k)=\lceil -\alpha+k-\ep-(n-1)\alpha_{k+1}\rceil$ (recall
from Theorem \ref{thm:1dimhypgeo} that $\ep=\beta-\sum_{i=1}^{n} \alpha_i+n$). Let us consider the operators
$$\bar{Q}_k=(-(n-1))^k\prod_{i=1}^k(t\dd_t-\alpha_i)$$
for $k=0,\ldots,n-2$ (where the empty product equals one) and
$$\bar{Q}_{n-1}=(-(n-1))^{n-1}\prod_{i=1}^{n-1}(t\dd_t-\alpha_i)+ \frac{(-(n-1))^{n-1}t(\beta-\alpha_1)}{1+\alpha_1-\alpha_n}\bar{Q}_0.$$
Then, the irregular Hodge filtration $\Firr_\bullet\cH$ is given by
$$\Firr_{\alpha+j}\cH=\bigoplus_{k:j\geq\nu_\alpha(k)}\cO_X\bar{Q}_k.$$
\end{thm}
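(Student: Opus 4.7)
The plan is to adapt the strategy of Section~4 of \cite{CDS} (developed there for purely irregular hypergeometrics of type $(n,0)$) to the case $m=1$. By Sabbah's prescription, the irregular Hodge filtration on $\cH$ is read off from the $V$-filtration along $\tau=0$ of a rescaled $\cRint_{\ttau\cX}$-module associated to $\wh\cH$, obtained by pulling $\wh\cH$ back under the rescaling $(z,t)\mapsto(z/\tau,t)$ (or equivalently via $\theta=1/\tau$). Concretely, $\Firr_{\alpha+j}\cH$ is recovered from a suitable piece of the Kashiwara-Malgrange $V$-filtration restricted back to $\tau=1$; the resulting numerical invariants are precisely the jumping numbers and multiplicities asserted in the statement.

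First I would write down an explicit presentation of this rescaled module. Starting from $\wh\cH=\cRint_{\cX}/(P,H)$ with
\[P=z^2\dd_z+(n-1)tz\dd_t+\ep z,\qquad H=\prod_{i=1}^n z(t\dd_t-\alpha_i)-t\cdot z(t\dd_t-\beta),\]
the rescaling converts $z\dd_z$ into $z\dd_z-\tau\dd_\tau$, so that the equation $P=0$ becomes a Bernstein-Sato type relation for $\tau$ acting on $\ttau\wh\cH$. The collection $\{R_k=\prod_{i=1}^k(t\dd_t-\alpha_i)\}_{k=0}^{n-1}$ is an $\cO_{\dG_m}$-basis of $\cH$ by a standard computation with the hypergeometric relation $H=0$ (compare Remark~\ref{rem:HodgeFiltOrderFilt}), and this lifts to an $\cO_{\ttau\cX}$-basis for $\ttau\wh\cH$ after appropriate normalization by powers of $\tau$.

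Next I would compute the $V$-order along $\tau=0$ of each basis element. Using $P=0$ to write $z\dd_z\equiv -(n-1)t\dd_t-\ep$ and iterating this action on the basis, one sees that the weights track the quantities $\rho(k)=-(n-1)\alpha_k+k$; the integer $\nu_\alpha(k)=\lceil -\alpha+k-\ep-(n-1)\alpha_{k+1}\rceil$ is then the smallest $j$ for which the normalized element $\bar{Q}_k$ enters the $V$-piece of weight at most $\alpha+j$. Assembling these contributions and restricting to $\tau=1$ yields the description $\Firr_{\alpha+j}\cH=\bigoplus_{k:\,j\geq\nu_\alpha(k)}\cO_X\bar{Q}_k$, from which the jumping numbers $\rho(k)$ (up to overall real shift) and their multiplicities $|\rho^{-1}(x)|$ follow by inspection.

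The main obstacle, and what necessitates the correction term in $\bar{Q}_{n-1}$, is that the naive leading term $(-(n-1))^{n-1}R_{n-1}$ fails to be $V$-pure: invoking the relation $H=0$ to rewrite $\prod_{i=1}^n(t\dd_t-\alpha_i)$ produces a $t$-multiple involving $(t\dd_t-\beta)R_0$ whose $V$-order along $\tau=0$ coincides with that of the main term. The precise coefficient $\frac{(-(n-1))^{n-1}t(\beta-\alpha_1)}{1+\alpha_1-\alpha_n}$ is pinned down by requiring cancellation of this lower-order obstruction under the action of $t\dd_t$; I expect this cancellation, and the verification that no further such obstruction occurs at lower $V$-level, to be the key technical computation, with the remaining bookkeeping being essentially mechanical given the setup provided by Theorem~\ref{thm:1dimhypgeo}.
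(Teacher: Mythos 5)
Your proposal follows the same overall strategy as the paper: rescale $\wh\cH$ via $\theta=\tau^{-1}$, present the rescaled module $\ttau\wh\cH$ explicitly, build an $\cO_{\ttau\cX}(*\ttau\cX_0)$-basis from the operators $R_k=\prod_{i\le k}(t\dd_t-\alpha_i)$ (normalized by powers of $z/\tau$, and with a correction to the top element), guess the $V$-filtration along $\tau=0$ as a monomial filtration with jumps governed by the $\nu_\alpha(k)$, and then extract the irregular Hodge filtration. So in outline this is the paper's proof.

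That said, several points in your sketch are either imprecise or understate where the real work lies. First, the final step is restriction to the diagonal $\tau=z$ (the map $i_{\tau=z}^*$), followed by taking the associated graded of the $z$-adic filtration; "restricting back to $\tau=1$" is not the right operation in Sabbah's formalism and would not produce the Rees module from which $\Firr_\bullet$ is read off. Second, the heart of the proof is not the bookkeeping you call "essentially mechanical" but the verification that your candidate filtration $\ttau U_\bullet$ really is the $\ttau V$-filtration: one must check conditions (i), (ii'), (iii'), (iv), (v) of the definition of strict $\dR$-specializability, and in particular the nilpotency of $z\tau\dd_\tau-\fe(\beta,\omega)$ on $\Gr^{\ttau U}_\alpha$ and that only $(\beta,0)$ can arise. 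This requires an explicit computation of the connection matrix in the basis $(Q_k)$; it is here that the strict ordering hypothesis $\alpha_{n-1}<\alpha_1+1$, $\alpha_n<\alpha_2+1$ is used to show that the off-diagonal contributions coming from $H=0$ fall into lower $V$-pieces. Third, your explanation for the correction coefficient in $\bar Q_{n-1}$ points in the right direction but is not quite what the paper does: the coefficient is pinned down by requiring that the $z$-pole part $A_\infty$ of the connection (in the Birkhoff-normal-form sense of \cite[Prop.~4.8]{dGMS}) become diagonal in this basis, which is precisely what allows the $\nu_\alpha(k)$ to define a filtration by free $\cO_{\ttau\cX}$-modules and the spectrum to be read off termwise; the $t\dd_t$-cancellation you describe is a consequence of that diagonality rather than the defining condition. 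If you fill in the $V$-filtration verification along these lines and correct the $\tau=z$ restriction, your outline becomes the paper's proof.
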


\begin{rem}

In general, the procedure given below can be of use to find an explicit expression for the irregular Hodge filtration, not only the numbers, of any hypergeometric of type $(n,m)$, provided both assumptions from Theorem \ref{thm:1dimhypgeo} are fulfilled. However, the calculations become soon too cumbersome to be included here.
\end{rem}

\begin{proof}
We will mimic the arguments of \cite[\S~4]{CDS}, providing almost no proof of the claims which are similar to some therein.

We must first consider the rescaling of $\wh\cH$: this is the inverse image $\ttheta\wh\cH:=\mu^*\cH$ (as $\cO_{\ttheta\cX}$-module), endowed with a natural action of $\cRint_{\ttheta\cX}$ as depicted in \cite[2.4]{Sa15} (note that $\theta=\tau^{-1}$), where $\mu$ is the morphism given in [ibid., Not. 2.1] by
$$\begin{array}{rrcl}
\mu:&\ttheta\cX&\ra&\cX\\
&(z,t,\theta)&\mapsto&(z\theta,t).
\end{array}$$
In this sense, we can apply the same argument of \cite[Prop. 4.1]{CDS} to get that the $\cRint_{\ttheta\cX}$-module $\ttheta\wh\cH$ associated with $\wh\cH$ can be presented as $\cRint_{\ttheta\cX}/(P,\ttheta R,\ttheta H)$, where $P=z^2\dd_z+(n-m)tz\dd_t+\ep z$ as in Theorem \ref{thm:1dimhypgeo}, $\ttheta R=z^2\dd_z-z\theta\dd_\theta$ and
$$\ttheta H=\prod_{i=1}^nz\theta(t\dd_t-\alpha_i)-tz\theta(t\dd_t-\beta).$$
Now we have to invert $\theta$ to obtain an $\cRint_{\ttau\cX}(*\ttau \cX_0)$-module $\ttau\wh\cH$, to work in the setting given by \cite[\S 2.3]{Sa15}. In this sense, we will denote by $\ttau\wh\cH$ the $\cRint_{\ttau\cX}(*\ttau\cX_0)$-module $(\id_{\cX}\times(j\circ\inv))_*\ttheta\wh\cH$, where $\inv:\mathbb{G}_{m,\theta}\rightarrow\mathbb{G}_{m,\tau}$ is the inversion operator $\theta\mapsto\tau$ and $j:\mathbb{G}_{m,\tau}\hookrightarrow\dA_{\tau}^1$ is the canonical inclusion. Then it is easy to see that $\ttau\wh\cH=\cRint_{\ttau\cX}(*\ttau \cX_0)/(P,\ttau R,\ttau H)$, with $P$ as always, $\ttau R=z^2\dd_z+z\tau\dd_\tau$ and
$$\ttau H=\prod_{i=1}^n\frac{z}{\tau}(t\dd_t-\alpha_i)-t\frac{z}{\tau}(t\dd_t-\beta).$$

The next step is forming the basis of $\ttau\wh\cH$ as a $\cO_{\ttau\cX}(*\ttau \cX_0)$-module. Let it be given by
$$Q_k=(-(n-1))^k\prod_{i=1}^k\frac{z}{\tau}(t\dd_t-\alpha_i)$$
for $i=0,\ldots,n-2$ and
$$Q_{n-1}=(-(n-1))^{n-1}\prod_{i=1}^{n-1}\frac{z}{\tau}(t\dd_t-\alpha_i)+ \frac{(-(n-1))^{n-1}t(\beta-\alpha_1)}{1+\alpha_1-\alpha_n}Q_0.$$
It is indeed a basis: we can use the expressions of $\ttau R$ and $P$ to replace the classes of $z\tau\dd_\tau$ and $z^2\dd_z$, respectively, in terms of $zt\dd_t$. Now $\ttau\wh\cH$ is generated as a $\cO_{\ttau\cX}(*\ttau \cX_0)$-module by the powers of $zt\dd_t$, and we can get rid of those of exponent greater than $n-1$ using $\ttau H$. The remaining $n$ powers can be expressed as a linear combination of the $Q_i$, forming a triangular matrix (almost diagonal in fact), so the latter conform a basis as well.

One could wonder about the odd expression of the $Q_i$. In the case with no betas of \cite{CDS}, the basis considered there was formed just by the successive products $\prod_{i=1}^k\frac{z}{\tau}(t\dd_t-\alpha_i)$, up to some constant. In this case, such a basis does not provide a connection matrix solving the Birkhoff problem with a diagonal matrix as a coefficient of the pole at infinity in $z$, which would give us a way to read the spectrum from that matrix (cf. \cite[Prop. 4.8]{dGMS}). As a consequence, we have to adapt such initial basis, and that is how we get the $Q_i$. Let us write the connection matrix explicitly.

Let $c=(\beta-\alpha_1)/(1+\alpha_1+\alpha_n)$, in such a way that
$$Q_{n-1}=(-(n-1))^{n-1}\prod_{i=1}^{n-1}\frac{z}{\tau}(t\dd_t-\alpha_i)+(-(n-1))^{n-1}ctQ_0.$$
A similar (but longer) calculation to the proof of \cite[Lem. 4.3]{CDS} shows that the integrable connection arising from the $\cRint_{\ttau\cX}(*\ttau X_0)$-module structure associated with $\ttau\wh\cH$ has the following matrix form:
$$\nabla\underline Q=\underline Q\left(\left(\tau A_0+zA_\infty\right)\frac{dz}{z^2}+\left(-\tau A_0+zA'_\infty\right)\frac{dt}{(n-1)zt}-\left(\tau A_0+zA_\infty\right)\frac{d\tau}{z\tau}\right).$$
There, if $n>2$, $A_0$, $A'_\infty$ and $A_\infty$ are the matrices
\begin{equation}\label{eq:MatrixForm}\begin{gathered}
A_0=\begin{pmatrix}
0&\cdots&-(-(n-1))^{n-1}ct & 0\\
1&\ddots& & (-(n-1))^{n-1}(c+1)t\\
 &\ddots&0&\vdots\\
 & & 1& 0\end{pmatrix},\\
A'_\infty=\diag((n-1)\alpha_1,\ldots,(n-1)\alpha_n)\text{ and }A_\infty=\diag(0,1,\ldots,n-1)-\ep I_n-A'_\infty.
\end{gathered}\end{equation}
If $n=2$, we have
\begin{equation}\label{eq:MatrixForm2}
A_0=\begin{pmatrix}
ct&c(c+1)t^2\\
1&(c+1)t
\end{pmatrix},\,
A'_{\infty}=\begin{pmatrix}
\alpha_1&0\\
0&\alpha_2
\end{pmatrix}
\text{ and } A_\infty=\diag(0,1)-\ep I_2-A'_\infty.
\end{equation}

Finally, the irregular Hodge filtration is obtained from a suitable $V$-filtration along the divisor $\tau=0$ defined on $\ttau\wh\cH$, which is called $\ttau V$-filtration (the new symbol $\ttau V$ is to make clear the variety over which we are working; note the same convention in \cite{Sa15}, from Remark 2.20 on). We are actually to define a filtration on $\ttau\wh\cH$, and then prove that it equals the $\ttau V$-filtration, following \cite[\S 2.1.2]{Mo13}.

Let us consider then
\begin{equation}\label{eq:filtrations}\begin{split}
\ttau U_\alpha\ttau\wh\cH:=\left\{\sum_{k=0}^{n-1}f_k\tau^{\nu_{k}}Q_k\,:\, f_k\in\cO_{\ttau\cX}\text{ , }\,\max(k-(n-1)\alpha_{k+1}-\ep-\nu_k)\leq\alpha\right\},\\
\ttau U_{<\alpha}\ttau\wh\cH:=\left\{\sum_{k=0}^{n-1}f_k\tau^{\nu_k}Q_k\,:\, f_k\in\cO_{\ttau\cX}\text{ , }\,\max(k-(n-1)\alpha_{k+1}-\ep-\nu_k)<\alpha\right\},
\end{split}\end{equation}
for any $\alpha\in\dR$.

The $\ttau U_\alpha\ttau\wh\cH$ form an increasing filtration, indexed by the real numbers but with a discrete set of jumping numbers, such that $\tau\ttau U_\alpha\ttau\wh\cH=\ttau U_{\alpha-1}\ttau\wh\cH$ for any $\alpha$ (those are conditions i and ii' in \cite[\S~2.1.2]{Mo13}). As usual, the graded piece associated with $\alpha$ is $\Gr_\alpha^{\ttau U}\ttau\wh\cH=\ttau U_\alpha\ttau\wh\cH/\ttau U_{<\alpha}\ttau\wh\cH$.

In \eqref{eq:filtrations}, all the exponents $\nu_k$ of the powers of $\tau$ accompanying the $f_kQ_k$ satisfy that $\nu_k\geq-\alpha+k-(n-1)\alpha_{k+1}-\ep$. Then we can define the steps of the filtration in the same alternative way as in \cite[Rem. 4.5]{CDS} as the free $\cO_{\ttau\cX}$-modules of finite rank
\begin{equation}\label{eq:AlternativeFiltration}
\ttau U_\alpha\ttau\wh\cH=\bigoplus_{k=0}^{n-1}\cO_{\ttau\cX}\cdot\tau^{\nu_{\alpha}(k)}Q_k,
\end{equation}
where $\nu_\alpha(k)=\lceil-\alpha+k-\ep-(n-1)\alpha_{k+1}\rceil$. With that expression, it is clear that the graded pieces $\text{Gr}_\alpha^{\ttau U}\ttau\wh\cH$ are
$$\text{Gr}_\alpha^{\ttau U}\ttau\wh\cH=\bigoplus_{k=0}^{n-1}\cO_{\cX}\cdot\tau^{\nu_{\alpha}(k)}Q_k,$$
which are strict $\cR_{\cX}$-modules (condition iv in \cite[\S~2.1.2]{Mo13}).

The next step in the proof is proving that $\ttau\wh\cH$ is strictly $\dR$-specializable along $\ttau \cX_0$ and its $\ttau V$-filtration is actually given by the $\ttau U_\alpha\ttau\wh\cH$. Although the proof is similar to that of \cite[Prop. 4.6]{CDS}, we have to adapt it a bit to our case here.

After what we already showed, it remains to show conditions iii' and v of \cite[\S~2.1.2]{Mo13} and prove that the $\ttau U_\alpha\ttau\wh\cH$ are coherent $V_0\cR_{\cX}$-modules. Let us start by the second condition. Consider then the mappings $\fp,\fe$ given by
$$\begin{array}{rcl}
(\fp,\fe):\dR\times\dC&\longrightarrow&\dR\times\dC\\
(\beta,\omega)&\longmapsto&(\beta+2\Re(z\bar\omega),-\beta z+\omega-\bar{\omega}z^2)\end{array}.$$
We must check that the operator $z\tau\dd_\tau-\fe(\beta,\omega)$ is nilpotent on the graded pieces $\Gr_\alpha^{\ttau U}\ttau\wh\cH$ only for a finite amount of $(\beta,\omega)\in\cK:=\{\beta+2\Re(z_0\bar\omega)=\alpha\}$, for any value $z_0$ of $z$. Moreover, those $(\beta,\omega)$ should belong in fact to $\dR\times\{0\}$ (cf. \cite[\S 1.3.a]{Sa15}), if we want to obtain the $\dR$-specializability.

Take then $(\beta,\omega)\in\cK$ and $f\tau^\nu Q_k\in\ttau U_\alpha\ttau\wh\cH$, with $f\in\cO_{\ttau\cX}$. We must have that $k-(n-1)\alpha_{k+1}-\ep-\nu\leq\alpha$. Assume that $n>2$ and $k<n-2$. Thanks to the matrix form \eqref{eq:MatrixForm} we know that
$$(z\tau\dd_\tau-\fe(\beta,\omega))f\tau^\nu Q_k=\big(z\tau\dd_\tau+(\nu+(n-1)\alpha_{k+1}+\ep-k+\beta)z-\omega+\bar{\omega}z^2\big)(f)\tau^\nu Q_k-f\tau^{\nu+1}Q_{k+1}.$$
Recall that the $\alpha_i$ are increasingly ordered, lying within the interval $[0,1)$. Thus $f\tau^{\nu+1}Q_{k+1}$ lives in $\ttau U_\alpha\ttau\wh\cH$, for
$$k+1-(n-1)\alpha_{k+2}-\ep-\nu-1\leq\left((k+1)-(n-1)\alpha_{k+2}-\ep)- (k-n\alpha_{k+1}-\ep)\right)-1+\alpha\leq\alpha.$$
Now we should look at what happens to the class of $f\tau^{\nu+1}Q_{k+1}$ in the $\alpha$-graded piece of $\ttau\wh\cH$.

Note that $\left[f\tau^\nu Q_k\right]\neq0$ if and only if $\nu+(n-1)\alpha_{k+1}+\ep-k+\alpha=0$, so
$$(z\tau\dd_\tau-\fe(\beta,\omega))f\tau^\nu Q_k=\big(z\tau\dd_\tau+(\beta-\alpha)z-\omega+\bar{\omega}z^2\big)(f)\tau^\nu Q_k-f\tau^{\nu+1}Q_{k+1}=$$
$$=\big(z\tau\dd_\tau-2\Re(z_0\bar\omega)z-\omega+\bar{\omega}z^2\big)(f)\tau^\nu Q_k-f\tau^{\nu+1}Q_{k+1}.$$

Now notice that $\tau$ divides $\tau\dd_\tau(f)$, so in fact $z\tau\dd_\tau(f)\tau^\nu Q_k\in\ttau U_{\alpha-1}\ttau\wh\cH$ and then we can further reduce our expression to
$$(z\tau\dd_\tau-\fe(\beta,\omega))f\tau^\nu Q_k=(-\omega-2\Re(z_0\bar\omega)z+\bar{\omega}z^2)f\tau^\nu Q_k-f\tau^{\nu+1}Q_{k+1}.$$

On the other hand, $\tau^{\nu+1}Q_{k+1}$ does not vanish either in $\Gr_\alpha^{\ttau U} \ttau\wh\cH$ if and only if $\alpha_{k+2}=\alpha_{k+1}$. Indeed, we know that $\nu+(n-1)\alpha_{k+1}+\ep-k+\alpha=0$, so doing the same as before, $k+1-(n-1)\alpha_{k+2}-\ep-\nu-1=\alpha+(n-1)(\alpha_{k+2}-\alpha_{k+1})$ and the claim follows. Furthermore, in order to $(z\tau\dd_\tau-\fe(\beta,\omega))$ to vanish, we should impose that $\omega=0$, just by looking at the coefficients of the powers of $z$ in the expression for $f$.

If $k=n-2$, we obtain from \ref{eq:MatrixForm} that
\begin{align*}
(z\tau\dd_\tau-\fe(\beta,\omega))f\tau^\nu Q_{n-2}&=\big(z\tau\dd_\tau+(\nu+(n-1)\alpha_{n-1}+\ep-(n-2)+\beta)z-\omega+\bar{\omega}z^2\big)(f)\tau^\nu Q_{n-2}\\
&-f\tau^{\nu+1}Q_{n-1}+f\tau^{\nu+1}(-(n-1))^{n-1}ctQ_0.
\end{align*}

Since $-(n-1)\alpha_1-\ep-\nu-1\leq-(n-1)(\alpha_1-\alpha_{n-1}+1)+\alpha<\alpha$ because $\alpha_{n-1}<\alpha_1+1$, the last summand above belongs to $\ttau U_{<\alpha}\ttau\wh\cH$, and then the argument can follow as with $k<n-2$.

Now if $k=n-1$, then everything would be the same again as before except we get the additional summand $-\tau^{\nu+1}Q_{k+1}$, which becomes $-f\tau^{\nu+1}(-(n-1))^{n-1}(c+1)tQ_1$, whose class vanishes in the graded piece under consideration, too. Indeed,
$$1-(n-1)\alpha_2-\ep-\nu-1\leq-(n-1)(\alpha_2-\alpha_n+1)+\alpha<\alpha,$$
for $\alpha_n<\alpha_2+1$.

In conclusion, $(z\tau\dd_\tau-\fe(\beta,\omega))^lf\tau^\nu Q_k$ can only vanish in $\Gr_\alpha^{\ttau U}\ttau\wh\cH$ if $\alpha=\beta$ (and then $\omega=0$), and does not do so until we get to an index $k+l$ such that $\alpha_{k+l}$ is strictly bigger than $\alpha_k$. Since there is a finite set of indexes, $(z\tau\dd_\tau-\fe(\beta,\omega))$ is nilpotent, of nilpotency index $n$ at most.

When $n=2$, we notice from \eqref{eq:MatrixForm2} that we have two possibilities. If $k=0$, everything is the same as with $k=n-2$ for $n>2$, and if $k=1$,
\begin{align*}
(z\tau\dd_\tau-\fe(\beta,\omega))f\tau^\nu Q_1&=\big(z\tau\dd_\tau+(\nu+\alpha_2+\ep-1+\beta)z-\omega+\bar{\omega}z^2\big)(f)\tau^\nu Q_1\\
&+f\tau^{\nu+1}(c+1)tQ_1+f\tau^{\nu+1}c(c+1)t^2Q_0.
\end{align*}
Here the argument runs similarly as in the general case.

Condition iii' can be rephrased as $z\tau\dd_\tau\ttau U_\alpha\ttau\wh\cH\subseteq\ttau U_\alpha\ttau\wh\cH$, using that $\ttau U_\alpha\ttau\wh\cH=\tau\ttau U_{\alpha+1}\ttau\wh\cH$, and that follows essentially from the same argument used to prove condition v above. Last, since $V_0\cR_{\cX}=\cO_{\ttau\cX}\langle z\dd_t,z\tau\dd_\tau\rangle$, it is clear from the computations above and the alternative expression \eqref{eq:AlternativeFiltration} for the filtration steps that they are cyclic $V_0\cR_{\cX}$-modules, and then coherent. Summing up and noting that all the calculations performed were in fact independent of $z_0$, $\ttau\wh\cH$ is strictly $\dR$-specializable along $\ttau \cX_0$ and the $\ttau U_\bullet\ttau\wh\cH$ form its $\ttau V$-filtration.

We can finally show the expression for the irregular Hodge filtration and then the irregular Hodge numbers like in \cite[Thm. 4.7]{CDS}. Since we know that $\wh\cH$ underlies an object in $\IrrMHM(\dG_{m,t})$  by Theorem \ref{thm:1dimhypgeo},
we deduce by \cite[Def. 2.52]{Sa15} that $\wh\cH$ is well-rescalable (cf. [ibid., Def. 2.19]) and so we can apply [ibid., Def. 2.22]. After formula \eqref{eq:AlternativeFiltration}, we clearly have
$$i_{\tau=z}^*\ttau V_\alpha\ttau\wh\cH=\ttau V_\alpha\ttau\wh\cH/(\tau-z)\ttau V_\alpha\ttau\wh\cH=\bigoplus_k\cO_{\cX}z^{\nu_\alpha(k)}\bar{Q}_k,$$
which is free $z$-graded of finite rank. Denote by $\pi$ the projection $\cX\ra \dG_{m,t}$. Then, the $z$-adic filtration on $\pi^*\cH[z^{-1}]$ induces a filtration on $i_{\tau=z}^*\ttau V_\alpha\ttau\wh\cH$, given by
$$F_ri_{\tau=z}^*\ttau V_\alpha\ttau\wh\cH:=\bigoplus_{s\leq r}\left(\bigoplus_{k\,:\,\nu_\alpha(k)\leq s}\cO_{\dG_{m,t}}\bar{Q}_k\right)z^s.$$
Then, $\Gr^F\left(i_{\tau=z}^*\ttau U_\alpha\ttau\wh\cH\right)$ is the Rees module associated to a new good filtration $\Firr_{\alpha+\bullet}\cH$ on $\cH$, for some $k=0,\ldots,n-1$, which is the irregular Hodge filtration. More concretely, $\Firr_\bullet\cH$ is given by
$$\Firr_{\alpha+j}\cH=\bigoplus_{k\,:\,\nu_\alpha(k)\leq j}\cO_{\dG_{m,t}}\bar{Q}_k.$$
Therefore, its jumping numbers are $-\ep+j-1-(n-1)\alpha_j$ for $j=1,\ldots,n$. Since the irregular Hodge filtration is defined up to an overall real shift, we can normalize the jumping numbers to $j-(n-1)\alpha_j$ and the irregular Hodge numbers will be their multiplicities.
\end{proof}

\bibliographystyle{amsalpha}

\def\cprime{$'$}
\providecommand{\bysame}{\leavevmode\hbox to3em{\hrulefill}\thinspace}
\providecommand{\MR}{\relax\ifhmode\unskip\space\fi MR }
\providecommand{\MRhref}[2]{  \href{http://www.ams.org/mathscinet-getitem?mr=#1}{#2}
}
\providecommand{\href}[2]{#2}

\end{document}